\newcommand{\s}{\Sigma}
\newcommand{\R}{\mathbb{R}}
\newcommand{\Cr}{\mathcal{C}^{r}}
\newcommand{\Xr}{\chi^{r}}
\newcommand{\Or}{\Omega^{r}}
\newcommand{\rn}[1]{\mathbb{R}^{#1}}
\newcommand{\e}{\varepsilon}
\newcommand{\V}{\mathcal{V}}
\newcommand{\W}{\mathcal{W}}
\newcommand{\U}{\mathcal{U}}
\newcommand{\sgn}{\textrm{sgn}}
\theoremstyle{definition}
\newtheorem {theorem} {Theorem} %[section]
\newtheorem {prop} {Proposition}
\newtheorem {lem} {Lemma}
\newtheorem {defn} {Definition}
\newtheorem {rem} {Remark}
\newtheorem*{thmA}{Theorem A}
\newtheorem*{thmB}{Theorem B}
\definecolor{verde}{rgb}{0.0,0.5,0.0}
\definecolor{azul}{rgb}{0,0,128}
\definecolor{roxo}{rgb}{0.44,0.16,0.39}
\definecolor{vinho}{rgb}{0.5,0.0,0.13}
\definecolor{lilas1}{rgb}{0.6,0.33,0.73}
\definecolor{rosa}{rgb}{0.84,0.04,0.33}
\definecolor{mostarda}{rgb}{0.91,0.41,0.17}
\definecolor{mostarda2}{rgb}{1.0,0.66,0.07}
\begin{document}

\title[On Structural Stability of 3D Filippov Systems]{On Structural Stability of 3D Filippov Systems: A Semi-Local Approach}

\author{Ot\'avio M. L. Gomide}
\address[OMLG]{Department of Mathematics, Unicamp, IMECC\\ Campinas-SP, 13083-970, Brazil}
\email{otaviomleandro@hotmail.com}

\author{Marco A. Teixeira}
\address[MAT]{Department of Mathematics, Unicamp, IMECC\\ Campinas-SP, 13083-970, Brazil}
\email{teixeira@ime.unicamp.br}

\maketitle

\begin{abstract}
The main purpose of this work is to provide a non-local approach to study aspects of structural stability of $3D$ Filippov systems. We introduce a notion of semi-local structural stability which detects when a piecewise smooth vector field is robust around the whole switching manifold and we give a complete characterization of such systems. In particular, we present some methods in the qualitative theory of piecewise smooth vector fields, emphasizing a geometrical analysis of the foliations generated by their orbits. Such approach displays surprisingly a rich dynamical behaviour that has been studied in details in this work.\newline\indent
It is worth to say that this subject has not been treated recently from a non-local point of view, and we hope that the approach adopted in this work contributes to the understanding of the structural stability for piecewise-smooth vector fields in its most global sense. 	
\end{abstract}

\section{Introduction}
\label{intro}

Piecewise Smooth Vector Fields (PSVF for short) are frequently encountered in many areas of science (see for instance \cite{Ba,DiB,Br}), where phenomena are described by smooth relationships among variables but they present a different nature in some regions of the state space. Their use in mathematical modelling has being considered a natural way to obtain more accurate prediction results. Based on the versatility of applications of this field, we are encouraged to develop a well-established theory to deal with this kind of systems. 

In the classical theory of smooth vector fields, the structural stability concept determines the robustness of a model with respect to the initial conditions and parameters as well as its efficiency. From our point of view, it is of the main importance to establish this concept to PSVF in a systematic way.

In attempt to reach this goal, many papers have emerged with the purpose of the characterization of the structural stability for PSVF. In dimension $2$, the concept of local structural stability was extensively studied in \cite{GTS,K,KRG}. In \cite{BPS}, Broucke et al have studied the problem in dimension $2$ from a global point of view. In dimension $3$, the local approach has been completely characterized from papers \cite{CJ1,CJ2,GT,ST,T2}. In higher dimension, some models were treated in \cite{CJn}, but it is still poorly understood, even locally.

As far as the authors know, in dimension $3$, non-local aspects of structural stability of PSVF have not been studied yet, maybe due to its high complexity. In light of this, we introduce in this work a concept of semi-local structural stability, in order to understand what happens around the whole switching manifold (not only point-wisely) of a robust PSVF. We attempt to provide all results in the most rigorous way by considering the problem from a geometric-topological point of view.

We consider piecewise-smooth vector fields $Z$ defined in $\rn{3}$ having a compact switching manifold $\s$, and we denote this set by $\Or$. Roughly speaking, $Z_0\in\Or$ is semi-local structurally stable at $\s$ if all systems $Z\in\Or$ sufficiently near from $Z_0$ present the same behavior as $Z_0$ in a neighborhood $V\subset\rn{3}$ of $\s$. In this work, we completely characterize all the semi-local structurally stable systems at $\s$, and we also conclude that it is not a generic property in $\Or$. Also, a version of Peixoto's Theorem for sliding vector fields is obtained.

It is worth to mention that the characterization of structural stability of $3D$ PSVF in its most global comprehensive notion is one of the most complex and intriguing topic in the theory of PSVF. The semi-local approach studied in this work allows us to find constraints in this characterizing problem, and we hope that it can be allied to the study of global connections between points of $\s$ (see \cite{CPKO}, for example) as a guideline to solve this problem. 

Our paper is structured as follows. An overview of basic concepts and $3D$ tangential singularities of codimension $0$ is given in Section 2. The topological orbital equivalences used throughout this work are described in Section $3$. Section $4$ presents a formal language to deal with this problem. In Section $5$ the main results are presented. Sections $6$, $7$ and $8$ are devoted to prove the principal results. In Section $9$, we discuss some future directions of this work.

\section{Preliminaries}

In what follows we present an overall description of some basic concepts and results which will be used throughout this work.

\subsection{Filippov Systems}

Let $M=\rn{3}$ and let $f:M\rightarrow \mathbb{R}$ be a smooth function having $0$ as a regular value. Suppose that $\Sigma=f^{-1}(0)$ is an embedded codimension $1$ submanifold of $M$. Also, assume that $\s$ is compact, connected and simply connected. Consider $M^{+}=f^{-1}\left([0,+\infty)\right)$ and $M^{-}=f^{-1}\left((-\infty,0]\right)$.

\begin{defn}A \textbf{$\Cr$-germ of piecewise-smooth vector field} at $\s$ is an equivalence class  $\widetilde{Z}=(\widetilde{X},\widetilde{Y})$ of pairwise $\Cr$ vector fields defined as follows: $Z_{1}=(X_{1},Y_{1})$ and $Z_{2}=(X_{2},Y_{2})$ are in the same equivalence class if and only if:
	\begin{itemize}
		\item[(a)] $X_{i}$ and $Y_{i}$ are defined in neighborhoods $U_{i}$ and $V_{i}$ of $\s$ in $M$, respectively ($i=1,2$);
		\item[(b)] There exist neighborhoods $U_{3}$ and $V_{3}$ of $\s$ in $M$ such that $U_{3}\subset U_{1}\cap U_{2}$ and $V_{3}\subset V_{1}\cap V_{2}$;
		\item[(c)] $X_{1}|_{U_{3}\cap \overline{M^{+}}}=X_{2}|_{U_{3}\cap\overline{M^{+}}}$ and $Y_{1}|_{V_{3}\cap \overline{M^{-}}}=Y_{2}|_{V_{3}\cap\overline{M^{-}}}$.
	\end{itemize}
	In this case, $Z=(X,Y)$ is a representative of the class $\widetilde{Z}$.  
	The set of all $\Cr$-germs of piecewise-smooth vector fields at $\s$ will be denoted by $\Or$. 
\end{defn}

\begin{rem}
	In order to simplify the notation, a $\Cr$-germ of vector field $\widetilde{Z}=(\widetilde{X},\widetilde{Y})$ will be referred simply by its representative $Z=(X,Y)$.
\end{rem}

The space of all $\Cr$-germs of piecewise-smooth vector fields $Z=(X,Y)$ at $\s$ such that $X=Y$ will be denoted by $\Xr$ and it coincides with the classical space of germs of smooth vector fields of class $\Cr$ at $\s$.% If there is no misunderstanding, we denote $Z=(X,X)$ simply by $X$.

Notice that $\Or=\Xr\times \Xr$. The $\Cr$ topology is introduced in the space of germs $\Xr$, and we endow $\Or$ with the product topology. 

If $Z=(X,Y)$ is a representative of the germ $\widetilde{Z}$ then, a \textbf{piecewise smooth vector field} is defined in some neighborhood $V$ of $\s$ in $M$ as follows:

\begin{equation}
Z(p)=\left\{ \begin{array}{cc}
X(p), &\textrm{if } p\in M^{+}\cap V,\\
Y(p), &\textrm{if } p\in M^{-}\cap V.
\end{array}\right.
\end{equation}

\begin{defn}
	The \textbf{Lie derivative} of $f$ in the direction of the vector field $X\in \chi^{r}$ at $p\in\Sigma$ is defined as $Xf(p)=X(p)\cdot \nabla f(p)$, and the successive Lie derivatives are given by $X^{n}f(p)=X(p)\cdot \nabla X^{n-1}f(p)$. The \textbf{tangency set} of $X$ with $\Sigma$ is given by $S_{X}=\{p\in\Sigma;\ Xf(p)=0\}$. 
\end{defn}

\begin{rem}
	Notice that the Lie derivative is well-defined for a germ $\widetilde{X}\in\Xr$ since all the elements in this class coincide in $\s$.
\end{rem}

\begin{defn}
	Let $Z=(X,Y)\in\Or$, a point $p\in \s$ is said to be a \textbf{tangential singularity} of $Z$ if $Xf(p)Yf(p)=0$ and $X(p), Y(p)\neq 0$.
\end{defn}	

If $Z=(X,Y)\in\Or$, then the tangency set of $Z$ is given by $S_Z=S_X\cup S_Y$, and the switching manifold $\Sigma$ splits into:

\begin{itemize}
	\item Crossing Region: $\Sigma^{c}(Z)=\{p\in \Sigma;\ Xf(p)Yf(p)>0\};$
	\item Stable Sliding Region: $\Sigma^{ss}(Z)=\{p\in \Sigma;\ Xf(p)<0 \textrm{ e }Yf(p)>0\};$
	\item Unstable Sliding Region: $\Sigma^{us}(Z)=\{p\in \Sigma;\ Xf(p)>0 \textrm{ e }Yf(p)<0\}.$			
\end{itemize}	

Define the \textbf{sliding region} of $Z$ as $\s^{s}(Z)=\s^{ss}(Z)\cup \s^{us}(Z)$.

\begin{rem}
	If there is no misunderstanding, the dependence of these regions on $Z$ is omitted. In addition, $\s$ may be denoted by $\s(Z)$, in order to distinguish the regions of $\s$ corresponding to $Z$, when necessary. 
\end{rem}

\begin{figure}[H]
	\centering
	\bigskip
	\begin{overpic}[width=11cm]{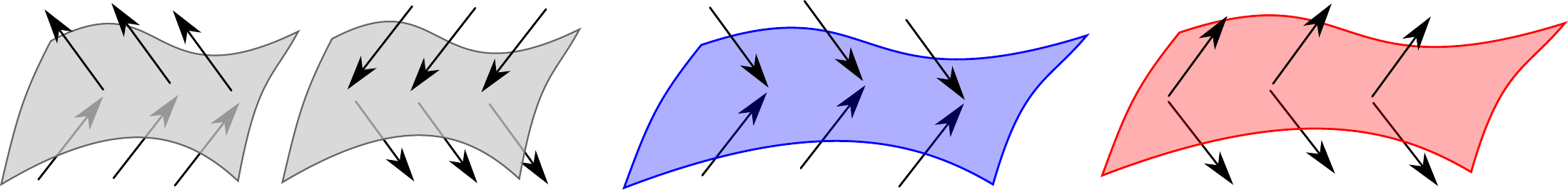}
		%					\begin{overpic}[grid,tics=5,width=11cm]{Figures/discregions.pdf}			
		\put(16,-2){{\footnotesize $(a)$}}		
		\put(52,-2){{\footnotesize $(b)$}}		
		\put(83,-2){{\footnotesize $(c)$}}	
		\put(-3,5){{\footnotesize $\s$}}	
		\put(5,13){{\footnotesize $X$}}	
		\put(4,-1){{\footnotesize $Y$}}							
		
	\end{overpic}
	\bigskip
	\caption{Regions in $\s$: $\s^{c}$ in $(a)$, $\s^{ss}$ in $(b)$ and $\s^{us}$ in $(c)$.   }	
\end{figure}  	

Notice that $\s$ is the disjoint union $\s^{c}\cup \s^{s}\cup S_{Z}$. If $p$ is a point of $\s^{c}$, then $X(p)$ and $Y(p)$ points towards the same direction, in this case the solution is given by the concatenation of the orbits of $X$ and $Y$ at $p$.
Special attention must be paid to the solutions of $Z$ passing through points of $\s^s$.

Actually, there are several different ways to define it as we can see in \cite{BSFJ,F} and references therein. In this work, we follow Filippov's convention to define a local solution of $Z$. 

\begin{defn}
	If $Z=(X,Y)\in \Or$ and $p\in \Sigma^{s}$, then we define the \textbf{sliding vector field}:	
	\begin{equation}
	F_{Z}(p)=\frac{1}{Yf(p)-Xf(p)}\left(Yf(p)X(p)-Xf(p)Y(p)\right).
	\end{equation}
\end{defn}

\begin{rem}
	Notice that $F_{Z}$ is a vector field tangent to $\s^{s}$. The critical points of $F_{Z}$ in $\s^{s}$ are called \textbf{pseudo-equilibria} of $Z$.
\end{rem}

\begin{defn}
	Let $Z=(X,Y)\in\Or$, a point $p\in \s$ is said to be a \textbf{$\s$-singularity} of $Z$ if $p$ is either a critical point of $X$ or $Y$ or a tangential singularity or a pseudo-equilibrium of $Z$. Otherwise, it is said to be a \textbf{regular-regular} point of $Z$
\end{defn}

\begin{defn}\label{NSVF}
	If $p\in\s^{s}$, the \textbf{normalized sliding vector field} of $Z=(X,Y)$ is the $\Cr$ vector field defined on $\s$ by:
	\begin{equation}
	F_{Z}^{N}(p)=Y f(p)X (p)-X f(p)Y(p).
	\end{equation}	
\end{defn}

\begin{rem}
	The normalized siding vector field can be $\Cr$ extended beyond the boundary of $\s^s$. In addition, if $R$ is a connected component of $\s^{ss}$, then $F_{Z}^{N}$ is a re-parameterization of $F_{Z}$ in $R$, and so they have exactly the same phase portrait. If $R$ is a connected component of $\s^{us}$, then $F_{Z}^{N}$ is a (negative) re-parameterization of $F_{Z}$ in $R$, then they have the same phase portrait, but the orbits are oriented in opposite direction.
\end{rem}

If $p\in\s^c$, then the orbit of $Z=(X,Y)\in\Or$ at $p$ is defined as the concatenation of the orbits of $X$ and $Y$ at $p$. Nevertheless, if $p\in\s\setminus\s^c$, then there is a lack of uniqueness of solutions. In this case,  the flow of $Z$ is multivalued and any possible trajectory passing through $p$ originated by the orbits of $X$, $Y$ and $F_Z$ is considered as a solution of $Z$. More details can be found in \cite{F,GTS}.

Roughly speaking, as we are interested in studying structural stability in $\Or$ it is imperative to take into account all the leaves of the foliation in $M$ generated by the orbits of $Z=(X,Y)$ (orbits of $X$, $Y$ and $F_{Z}$). For more details see Section \ref{local}.

In a PSVF, if only one component of $Z=(X,Y)$ is considered, say $X$, then it is a germ of  $\Cr$ vector field defined on a manifold with boundary $\overline{M^{+}}$. Therefore, the theory of vector fields on manifolds with boundary (see \cite{PP,ST,T1,V}) is used to distinguish some points of $\s$.

In the next topics, some types of contact of $X$ and $Y$ with $\s$ are presented such as their geometrical interpretation.

\subsection{Tangential Singularities of Codimension $0$}

Denote the space of germs of $\Cr$ vector fields defined on the manifold with boundary $N$ by $\Xr(N)$ ($r>1$). If $N$ is not specified, then consider $N=\overline{M^+}$ or $N=\overline{M^-}$.% If there is no misunderstanding, $\Xr(N)$ will be denoted simply by $\Xr$.

\begin{defn}A point $p\in \Sigma$ is said to be a \textbf{fold} point of $X\in\chi^{r}(\overline{M^{+}})$ if $Xf(p)=0$ and $X^{2}f(p)\neq 0$. If $X^{2}f(p)>0$ (resp. $X^{2}f(p)<0$), then $p$ is a \textbf{visible fold} (resp. \textbf{invisible fold}).
\end{defn}

\begin{rem}
	If $X\in\chi^{r}(\overline{M^{-}})$, the visibility condition is switched.
\end{rem}

\begin{defn}
	A point $p\in \Sigma$ is said to be a \textbf{cusp} of $X\in\Xr(N)$ if $Xf(p)=X^{2}f(p)=0$, $X^{3}f(p)\neq 0$ and $\{df(p),\ dXf(p),\ dX^{2}f(p)\}$ is a linearly independent set.
\end{defn}

Generically, a fold point of $X$ belongs to a local curve of fold points of $X$ with the same visibility, and cusp points occur as isolated points located at the extremes of curves of fold points.

\begin{defn}
	$X\in \Xr(N)$ is said to be \textbf{simple} if either $S_{X}=\emptyset$ or $S_{X}$ is just composed by fold and cusp points of $X$. The set of all simple germs of $\Xr(N)$ will be denoted by $\Xr_{S}$.
\end{defn}

In \cite{V}, S. M. Vishik used tools from Theory of Singularities to obtain sharpen results on vector fields near the boundary of an $n$-manifold. In particular, when $n=3$, the following result is stated.

\begin{theorem}[Vishik's Normal Form] \label{Vishik}
	Let $X\in\Xr_{S}$. If $p\in S_{X}$ then there exist a neighborhood $V(p)$ of $p$ in $M$, a system of coordinates $(x_{1},x_{2},x_{3})$ at $p$ defined in $V(p)$ ($x_{i}(p)=0$, $i=1,2,3$) and an integer $k=k(p)$, $k=1,2$, such that:
	\begin{enumerate}
		\item If $p$ is a fold point, then $k=1$ and $X|_{V(p)}$ is a germ at $V(p)\cap \s$ of the vector field given by:
		\begin{equation}\label{foldV}
		\left\{ \begin{array}{l}
		\dot{x_{1}}=x_{2},\\
		\dot{x_{2}}=1,\\
		\dot{x_{3}}=0.\\
		\end{array}\right.
		\end{equation}
		\item If $p$ is a cusp point, then $k=2$ and $X|_{V(p)}$ is a germ at $V(p)\cap \s$ of the vector field given by:
		\begin{equation}\label{cuspV}
		\left\{ \begin{array}{l}
		\dot{x_{1}}=x_{2},\\
		\dot{x_{2}}=x_{3},\\
		\dot{x_{3}}=1.\\
		\end{array}\right.
		\end{equation}	
		\item $\s$ is given by the equation $x_{1}=0$ in $V(p)$.
	\end{enumerate}
	
	The set $\Xr_{S}$ is open and dense in $\Xr(N)$.
\end{theorem}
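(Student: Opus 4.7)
The plan is to handle the fold and cusp cases by explicit coordinate constructions, and then to settle the openness and density of $\Xr_{S}$ via a transversality argument.

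For the fold case, since $X(p)\neq 0$ is tangent to $\s$, I would first invoke the flow-box theorem to obtain coordinates $(y_{1},y_{2},y_{3})$ centered at $p$ in which $X=\partial/\partial y_{2}$; in these coordinates $\s$ is the zero set of some $g$ with $\partial_{y_{2}}g(p)=0$ and $\partial_{y_{2}}^{2}g(p)\neq 0$. The implicit function theorem applied to $\partial_{y_{2}}g=0$ produces a smooth $\beta(y_{1},y_{3})$, and the translation $y_{2}\mapsto y_{2}-\beta$ (which preserves $X$) together with a parameterized Morse lemma in $y_{2}$ puts $g$ into the form $g=a(y_{1},y_{3})+y_{2}^{2}h(y_{1},y_{2},y_{3})$ with $h(p)\neq 0$ and $da(p)\neq 0$. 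After possibly swapping $y_{1}$ and $y_{3}$, I can take $(x_{1},x_{2},x_{3}):=(g,Xg,y_{3})$ as new coordinates, in which $Xx_{1}=x_{2}$, $Xx_{3}=0$, and $Xx_{2}=X^{2}g$ is a nonvanishing smooth function. A final diffeomorphism adjusting only the $x_{2}$-coordinate (built by integrating along the orbits of $X$) normalizes $X^{2}g$ to $1$, producing \eqref{foldV} with $\s=\{x_{1}=0\}$.

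For the cusp case, the linear independence hypothesis on $\{dg,dXg,dX^{2}g\}$ at $p$ is precisely the statement that $\Phi=(g,Xg,X^{2}g)$ is a local diffeomorphism. Using $(x_{1},x_{2},x_{3})=\Phi$ as coordinates gives $Xx_{1}=x_{2}$ and $Xx_{2}=x_{3}$ tautologically, while $Xx_{3}=X^{3}g$ is nonzero at $p$. A final diffeomorphism of the form $(x_{1},x_{2},x_{3})\mapsto(x_{1},x_{2},\xi(x_{1},x_{2},x_{3}))$ with $\xi$ solving the non-characteristic linear transport equation $\xi_{x_{1}}x_{2}+\xi_{x_{2}}x_{3}+\xi_{x_{3}}\,X^{3}g=1$ near the origin preserves the first two identities and normalizes $Xx_{3}$ to $1$, yielding \eqref{cuspV}.

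Openness of $\Xr_{S}$ is routine: fold and cusp are defined by non-vanishing of finitely many components of a finite jet of $X$, and $S_{X}$ is compact whenever $\s$ is compact, so the open conditions can be taken uniformly. Density is obtained from Thom's transversality theorem applied to the $2$-jet of $Xg$ along $\s$: for a residual set of $X\in\Xr(N)$, the jet extension is transverse to the Thom--Boardman-type strata encoding higher degeneracies, so that $S_{X}$ contains only fold and cusp points. The main obstacle lies in the final normalization steps of the two cases, where one must arrange the exact constant right-hand sides while preserving the identities already established and the equation $\s=\{x_{1}=0\}$; in the cusp case this amounts to solving a non-degenerate first-order linear PDE near $p$, and in the fold case to a comparable but simpler adjustment.
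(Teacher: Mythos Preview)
The paper does not prove this result; it is cited from Vishik \cite{V}, whose argument uses singularity theory (in flow-box coordinates for $X$ the problem reduces to Morin normal forms for the defining function of $\s$ along $X$-orbits). Your direct coordinate approach is a legitimate alternative, and the openness/density argument via finite-jet open conditions and Thom transversality is fine.

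The gap lies in your two normalization steps, which share the same error. In the cusp case you propose $(x_{1},x_{2},x_{3})\mapsto(x_{1},x_{2},\xi)$ with $X\xi=1$ and assert that this ``preserves the first two identities''. It preserves $Xx_{1}=x_{2}$ trivially, but it does \emph{not} preserve $Xx_{2}=x_{3}$: after the change, $Xx_{2}$ still equals the \emph{old} $x_{3}$, which is now a nontrivial function of $(x_{1},x_{2},\xi)$ rather than the new third coordinate $\xi$. The fold step fails identically: altering only $x_{2}$ destroys the relation $Xx_{1}=x_{2}$. The correct move is to modify $x_{1}$ and then rebuild the rest. In flow-box coordinates $X=\partial_{y_{2}}$, Malgrange preparation (after a $y_{2}$-translation, which preserves $\partial_{y_{2}}$) gives $g=e(y)\bigl(y_{2}^{k+1}+a_{k-1}(y_{1},y_{3})\,y_{2}^{k-1}+\cdots+a_{0}(y_{1},y_{3})\bigr)$ with $e$ a unit, where $k=1$ at a fold and $k=2$ at a cusp. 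Setting $\tilde x_{1}:=\bigl(y_{2}^{k+1}+\cdots+a_{0}\bigr)/(k+1)!$ one has $X^{k+1}\tilde x_{1}\equiv 1$ and $\{\tilde x_{1}=0\}=\s$, and then $\tilde x_{j+1}:=X^{j}\tilde x_{1}$ for $1\le j\le k$ (together with an extra first integral $\tilde x_{3}$ when $k=1$) yields the desired chart. Equivalently, in your coordinates one seeks $\tilde x_{1}=u\cdot g$ with $u$ a unit solving $X^{k+1}(u g)=1$ along orbits, and rebuilds \emph{all} subsequent coordinates as successive $X$-derivatives of $\tilde x_{1}$; adjusting a single downstream coordinate, as you propose, cannot work.
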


\begin{rem}
	If we perform the change of coordinates $y_1=x_3$, $y_2=x_3^2-2x_2$, and $y_3=2x_1$, then system \eqref{cuspV} is carried to the system $\dot{y_{1}}=1,\ \dot{y_{2}}=0,\ \dot{y_{3}}=y_1^2-y_2,$. Analogously, if we consider the change $y_1=x_2$, $y_2=x_3$, and $y_3=x_1$, then \eqref{foldV} is carried to $\dot{y_{1}}=1$, $\dot{y_{2}}=0$, $\dot{y_{3}}=y_1$. In both cases $\s$ is given by the equation $y_3=0$. 				
\end{rem}

In the piecewise-smooth context, we consider the following tangential singularities:

\begin{defn}
	Let $Z=(X,Y)\in\Or$. A tangential singularity $p\in \s$ is said to be \textbf{elementary} if it satisfies one of the following conditions:
	\begin{itemize}
		\item[(FR) -] $Xf(p)= 0$, $X^{2}f(p)\neq 0$ and $Yf(p)\neq 0$ (resp. $Xf(p)\neq 0$, $Yf(p)=0$ and $Y^{2}f(p)\neq 0$). In this case, $p$ is said to be a \textbf{fold-regular} (resp. regular-fold) point of $\s$.	
		\item[(CR) -] $Xf(p)= 0$, $X^{2}f(p)=0$, $X^{3}f(p)\neq 0$ and $Yf(p)\neq 0$ (resp. $Xf(p)\neq 0$, $Yf(p)=0$, $Y^{2}f(p)= 0$ and $Y^{3}f(p)\neq 0$), and $\{df(p),dXf(p),dX^{2}f(p)\}$ (resp. $\{df(p),dYf(p),dY^{2}f(p)\}$) is a linearly independent set. In this case, $p$ is said to be a \textbf{cusp-regular} (resp. regular-cusp) point of $\s$.	
		\item[(FF) -] If $Xf(p)= 0$, $X^{2}f(p)\neq0, Yf(p)= 0$, $Y^{2}f(p)\neq 0$ and $S_{X}\pitchfork S_{Y}$ at $p$. In this case, $p$ is said to be a \textbf{fold-fold} point of $\s$. 
	\end{itemize}	
\end{defn}

\begin{figure}[H]
	\centering
	\bigskip
	\begin{overpic}[width=10cm]{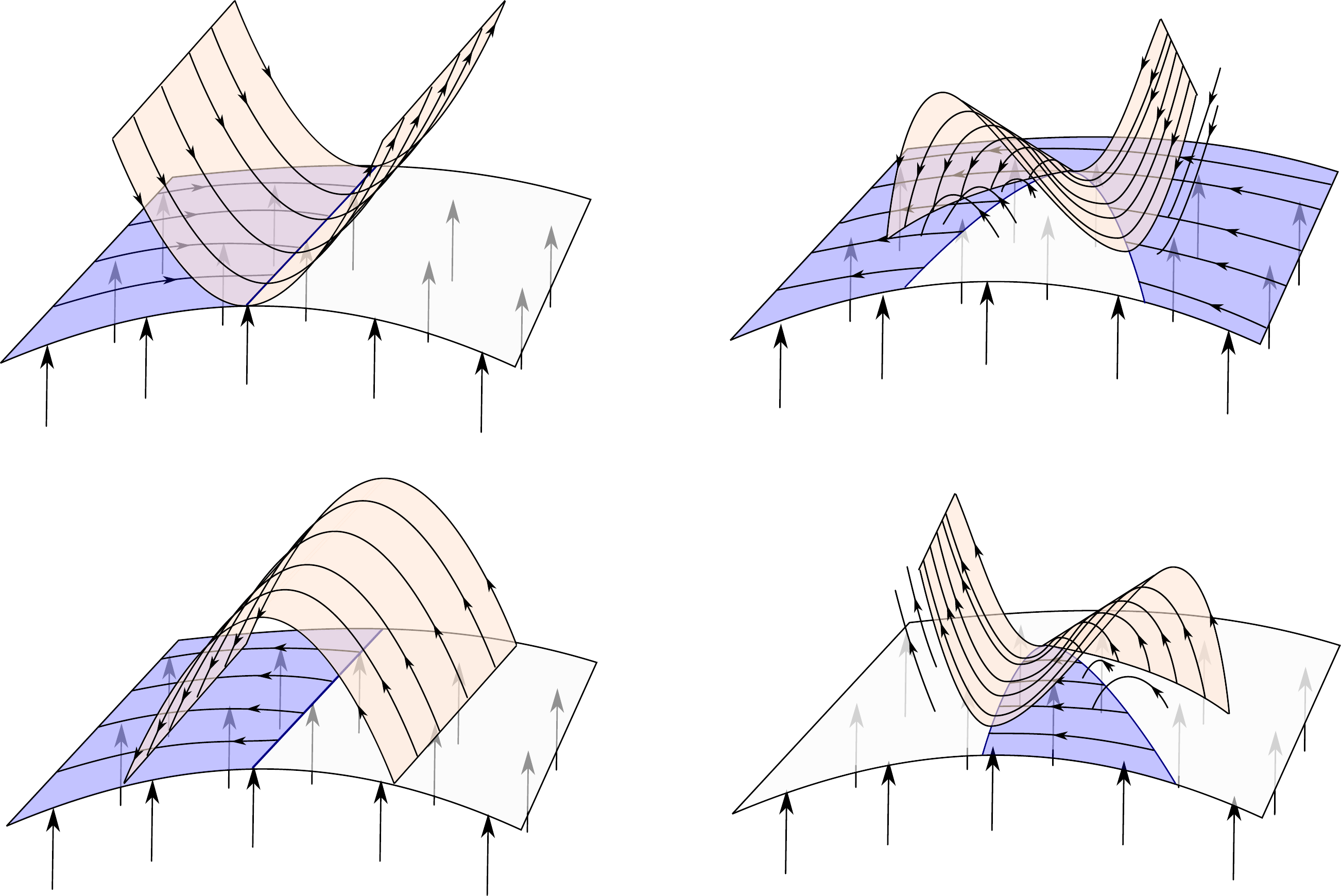}
		%	\begin{overpic}[grid,tics=5,width=11cm]{Figures/c0singularities.pdf}
		\put(20,34){{\footnotesize (a-i)}}
		\put(20,-2){{\footnotesize (a-ii)}}		
		\put(75,34){{\footnotesize (b-i)}}
		\put(75,-2){{\footnotesize (b-ii)}}													
		\put(0,45){{\footnotesize $\s$}}
		\put(1,53){{\footnotesize $X$}}				
		\put(0,35){{\footnotesize $Y$}}																			
	\end{overpic}
	\bigskip
	\caption{(a) Fold-Regular singularities ((i) visible and (ii) invisible) and (b) Cusp-Regular singularities ((i) $X^3f(p)<0$ and (ii) $X^3f(p)>0$).}	
\end{figure} 

\begin{rem}
	If $p$ is a fold-fold point of $Z=(X,Y)\in\Or$, then we classify its visibility as follows:
	\begin{enumerate}
		\item[(H) $-$] If $X^{2}f(p)>0$ and $Y^{2}f(p)<0$ then $p$ is a \textbf{visible fold-fold} point ;
		\item[(P) $-$] If $X^{2}f(p)<0$ and $Y^{2}f(p)<0$ then $p$ is a \textbf{invisible-visible fold-fold} point and if $X^{2}f(p)>0$ and $Y^{2}f(p)>0$ then $p$ is a \textbf{visible-invisible fold-fold} point;		
		\item[(E) $-$] If $X^{2}f(p)<0$ and $Y^{2}f(p)>0$ then $p$ is a \textbf{invisible fold-fold} point.	
	\end{enumerate}
	A fold-fold point of type (H), (P) and (E) is referred as \textbf{hyperbolic fold-fold}, \textbf{parabolic fold-fold}, and \textbf{elliptic fold-fold}, respectively.
	
	\begin{figure}[H]
		\centering
		\bigskip
		\begin{overpic}[width=8.5cm]{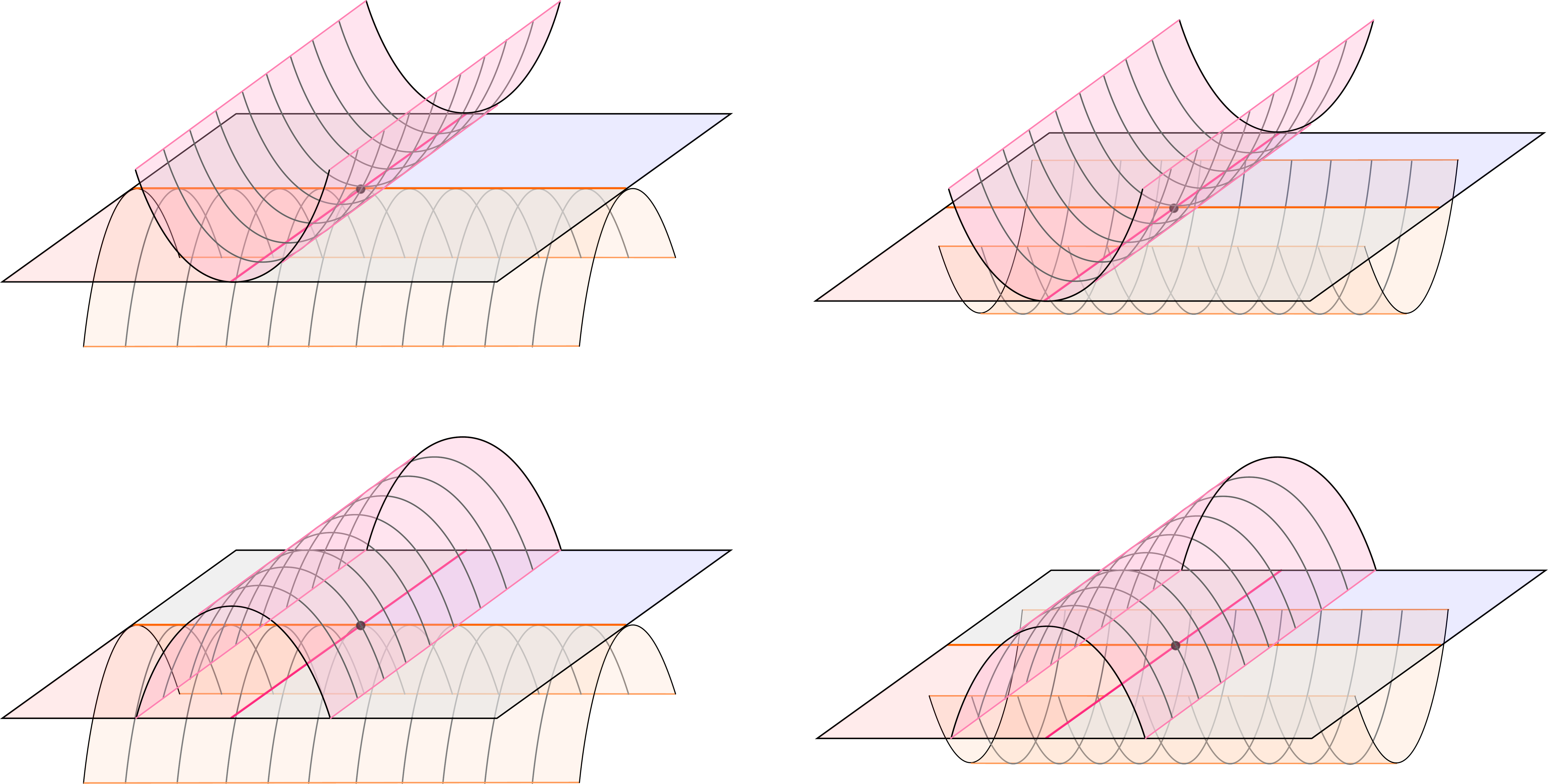}
			%			\begin{overpic}[grid,tics=5,width=10cm]{types.pdf}			
			\put(22,25){{\footnotesize (a)}}						
			\put(78,25){{\footnotesize (b)}}
			\put(22,-5){{\footnotesize (c)}}						
			\put(78,-5){{\footnotesize (d)}}									
		\end{overpic}
		\bigskip
		\caption{Fold-Fold Singularity: (a) Hyperbolic, (b,c) Parabolic and (d) Elliptic.}	\label{types}
	\end{figure} 	 			
\end{rem}	

\begin{defn}
	Let $Z_0\in\Or$, we say that $\Gamma_0\subset\s^s$ is a \textbf{$\s$-separatrix of a fold-fold point} $p_0$ of $Z_0$, if it satisfies one of the following conditions:
	\begin{enumerate}
		\item $p_0$ is a singularity of saddle type of $F_{Z_0}^N$ and $\Gamma_0$ is a saddle separatrix of $F_{Z_0}^N$ at $p_0$;
		\item $p_0$ is a singularity of node type of $F_{Z_0}^N$ and $\Gamma_0$ is a strong manifold of $F_{Z_0}^N$ at $p_0$.
	\end{enumerate}	
	If $\Gamma_0$ is both a $\s$-separatrix of two distinct fold-fold points, we say that $\Gamma_0$ is a connection of $\s$-separatrices of fold-fold points.
\end{defn}

\begin{defn}
	Define $\Xi_{0}$ as the set of all $Z\in\Or$ such that, for each $p\in\s$, either $p$ is a regular-regular point of $Z$ or $p$ is an elementary tangential singularity.
\end{defn}

\begin{rem}
	An element $Z\in\Xi_0$ is referred as an \textbf{elementary piecewise-smooth vector field}.
\end{rem}

From Theorem \ref{Vishik}, we derive that:
\begin{prop}\label{vishik_ns}
	$\Xi_{0}$ is an open dense set of $\Or$.
\end{prop}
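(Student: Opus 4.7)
The plan is to split the claim into openness and density and leverage Vishik's Normal Form (Theorem \ref{Vishik}) as the main input, supplementing it with a transversality argument on $\Sigma$ to control the interaction between $S_X$ and $S_Y$.

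For openness, fix $Z_0=(X_0,Y_0)\in \Xi_0$. Since $\Sigma$ is compact, it suffices to show that every $p_0\in\Sigma$ admits an open neighborhood $V(p_0)$ in $M$ and an open neighborhood $\mathcal{N}(p_0)$ of $Z_0$ in $\Omega^r$ such that every $Z\in\mathcal{N}(p_0)$ is elementary at each point of $V(p_0)\cap\Sigma$; then a finite subcover of $\Sigma$ by the $V(p_0)$'s yields the desired neighborhood of $Z_0$ in $\Xi_0$. Case-by-case: if $p_0$ is regular-regular, the condition $X_0f(p_0)Y_0f(p_0)\neq 0$ is open in both $p$ and $Z$; if $p_0$ is (FR) or (CR), the nonvanishing of $X^2 f$ (resp.\ $X^3f$ with linear independence of differentials) together with $Yf\neq 0$ are all $C^r$-open conditions, and standard implicit function arguments show that the tangency set $S_X$ near $p_0$ deforms $C^r$-continuously in $Z$, staying in the regular region of $Y$; the (FF) case is identical, with the extra transversality $S_X\pitchfork S_Y$ being an open condition on transverse intersections of $1$-dimensional submanifolds of $\Sigma$.

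For density, let $Z_0=(X_0,Y_0)\in\Omega^r$ and $\varepsilon>0$. By Vishik's theorem applied separately to $X_0\in\chi^r(\overline{M^+})$ and $Y_0\in\chi^r(\overline{M^-})$, there exist simple $X_1,Y_1$ $\varepsilon/2$-close to $X_0,Y_0$, so $S_{X_1},S_{Y_1}$ are each made of fold curves and isolated cusps. It remains to arrange, by a further arbitrarily small perturbation $(X,Y)$ of $(X_1,Y_1)$:
\begin{enumerate}
    \item[(i)] every cusp of $X$ satisfies $Yf\neq 0$ and every cusp of $Y$ satisfies $Xf\neq 0$;
    \item[(ii)] $S_X\pitchfork S_Y$ along the (finitely many) fold-fold points.
\end{enumerate}
Both are generic transversality conditions on $\Sigma$, a compact $2$-manifold. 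For (i), the cusps of $X$ form a finite set, and a generic translation-type perturbation $Y\mapsto Y+\delta Y$ (supported near each cusp of $X$) displaces the fold curve $S_Y$ off any given finite set; by symmetry the same works swapping roles. For (ii), $S_X$ and $S_Y$ are (finite unions of) $C^r$-curves in $\Sigma$, and transverse intersection of curves on a surface is the generic condition, obtained by a small perturbation of the component $(Xf)^{-1}(0)$ via modifying $X$. Combining these perturbations (done on disjoint compact pieces of $\Sigma$ so they don't interfere with the simple structure already achieved), we obtain $(X,Y)\in\Xi_0$ within $\varepsilon$ of $(X_0,Y_0)$.

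The main obstacle I anticipate is ensuring that the perturbations used to secure (i) and (ii) do not destroy the simplicity of $X$ or $Y$ produced by Vishik's theorem, and that they are compatible with the germ topology on $\Omega^r=\chi^r\times\chi^r$. Since simplicity is itself an open condition (by Theorem \ref{Vishik}), one can choose each perturbation small enough to remain in the open set $\chi^r_S\times\chi^r_S$, and the perturbations needed for (i) and (ii) can be localized using bump functions near the finite set of cusp and fold-fold points, exploiting the compactness of $\Sigma$. The density argument thus reduces to a finite number of local perturbation steps, each justified by classical transversality on the $2$-dimensional manifold $\Sigma$.
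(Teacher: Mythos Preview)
Your proposal is correct and follows the same route the paper indicates: the paper simply asserts that the proposition is ``derived from Theorem~\ref{Vishik}'' and gives no further argument. Your sketch supplies precisely the details one would expect---openness via compactness of $\Sigma$ and local $C^r$-stability of each elementary condition, and density via Vishik's genericity of simple vector fields followed by a standard transversality perturbation on $\Sigma$ to separate cusps from the other tangency curve and to enforce $S_X\pitchfork S_Y$.
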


The elementary tangential singularities of type (FR) and (CR)  determine certain local behavior of the sliding solutions lying on $\s^{s}$, as we can see in the following result proved in \cite{T2}: 

\begin{lem} \label{sliding}
	Let $Z=(X,Y) \in \Omega^{r}$ and assume that $R$ is a connected component of $\Sigma^{s}$. Then:
	\begin{enumerate}
		\item The sliding vector field $F_{Z}$ is of class $\mathcal{C}^{r}$ and it can be smoothly extended beyond the boundary of $R$ through the normalized sliding vector field $F_Z^N$.
		\item If $p\in \partial R$ is a fold point of $X$ and a regular point of $Y$, then $F_{Z}$ is transverse to $\partial R$ at $p$.
		\item If $p\in \partial R$ is a cusp point of $X$ and a regular point of $Y$, then $F_{Z}$ has a quadratic contact with $\partial R$ at $p$.	
	\end{enumerate} 
\end{lem}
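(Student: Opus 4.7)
The plan is to treat the three items as increasingly refined computations of Lie derivatives of $f$ along the normalized sliding field $F_Z^N$, which is well-defined and of class $\Cr$ on a whole neighborhood of $\overline{R}$ in $\s$.

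For item (1), I would first observe that
$F_Z^N f = Yf\cdot Xf - Xf\cdot Yf \equiv 0,$
so $F_Z^N$ is automatically tangent to $\Sigma$ at every point, giving a $\Cr$ vector field on $\Sigma$ that is defined beyond the boundary of $R$ (where the denominator $Yf-Xf$ of $F_Z$ vanishes). On the connected component $R\subset\s^s$, the sign of $Yf-Xf$ is constant, so $F_Z = (Yf - Xf)^{-1} F_Z^N$ is a $\Cr$ positive (resp. negative) reparametrization of $F_Z^N$ on $\s^{ss}$ (resp. $\s^{us}$), which justifies calling $F_Z^N$ a smooth extension of $F_Z$ beyond $\partial R$.

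For item (2), I would use that near a fold-regular point $p$, the boundary $\partial R$ coincides locally with the tangency curve $S_X = \{Xf=0\}\cap\Sigma$, which is a regular curve at $p$ by the implicit function theorem (since $dXf|_\s(p)\neq 0$ from $X^2f(p)\neq 0$). Transversality of $F_Z^N$ to $S_X$ amounts to $F_Z^N(Xf)(p)\neq 0$. A direct computation gives
\[
F_Z^N(Xf) \;=\; Yf\cdot X^2f - Xf\cdot Y(Xf),
\]
and evaluating at $p$ (where $Xf(p)=0$) yields $Yf(p)\,X^2f(p)$, which is nonzero by hypothesis. Hence $F_Z^N$, and therefore $F_Z$, crosses $\partial R$ transversally at $p$.

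For item (3), at a cusp-regular point $p$, the linear independence of $\{df(p), dXf(p), dX^2f(p)\}$ ensures in particular that $\{df, dXf\}$ are independent at $p$, so that $S_X$ is still a smooth curve through $p$ and $\partial R$ agrees with $S_X$ locally. One checks $F_Z^N(p)=Yf(p)\,X(p)\neq 0$ is tangent to $S_X$ at $p$ because $F_Z^N(Xf)(p) = Yf(p)\,X^2f(p) = 0$. Then a second application of $F_Z^N$, using the formula
\[
F_Z^N\!\big(F_Z^N(Xf)\big) \;=\; Yf\cdot X\!\left(Yf\cdot X^2f - Xf\cdot Y(Xf)\right) \;-\; Xf\cdot Y\!\left(Yf\cdot X^2f - Xf\cdot Y(Xf)\right),
\]
and evaluating at $p$ (where $Xf(p)=X^2f(p)=0$), reduces all terms to $(Yf(p))^2\,X^3f(p)\neq 0$. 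This shows that $Xf$ restricted to the integral curve of $F_Z^N$ through $p$ vanishes exactly to order two, i.e.\ $F_Z^N$ has a quadratic contact with $\partial R$ at $p$. The same holds for $F_Z$ on the interior of $R$ by the reparametrization in item (1).

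The only mildly delicate point I foresee is item (3): one must justify that $\partial R$ really is a smooth curve at the cusp, which is where the transversality hypothesis on $\{df, dXf, dX^2f\}$ gets used. Once this is in place, the whole lemma reduces to the two Lie-derivative computations above, and the signs determine the claimed transverse/quadratic nature of the contact.
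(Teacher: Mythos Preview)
Your argument is correct. The three computations
\[
F_Z^N f \equiv 0,\qquad
F_Z^N(Xf)\big|_{p} = Yf(p)\,X^2f(p),\qquad
\bigl(F_Z^N\bigr)^2(Xf)\big|_{p} = \bigl(Yf(p)\bigr)^2 X^3f(p)
\]
are exactly what is needed, and your care in checking that $S_X$ is a regular curve at the cusp (via independence of $df(p)$ and $dXf(p)$) closes the only potential gap.

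Regarding comparison with the paper: the paper does \emph{not} supply its own proof of this lemma. It is stated with the attribution ``proved in \cite{T2}'' (Teixeira, \emph{J.\ Differential Equations}, 1990) and used as a black box thereafter. So there is nothing to compare against in this manuscript; your self-contained Lie-derivative argument is a legitimate and efficient way to establish the result directly.

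One cosmetic remark: in item~(1) you should perhaps say explicitly that $Yf - Xf$ is nonvanishing on $R$ (not merely of constant sign) because $R\subset\Sigma^s$ by definition, so the reparametrization factor is genuinely smooth and nonzero there. You clearly know this, but it is worth stating since it is the whole point of why $F_Z$ fails to extend while $F_Z^N$ does.
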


\section{Topological Equivalences in $\Or$}\label{equivs}

We are concerned with the persistence of the foliation of the state space generated by a vector field $Z=(X,Y)$ in $\Or$. In light of this, we consider orbital equivalences throughout this work.

\subsection{Sliding Topological Equivalence}

Firstly, we consider a topological equivalence to relate piecewise-smooth vector fields which present the same behavior in the sliding region.

\begin{defn}\label{slequivalencedef}
	Let $Z_{0},Z\in\Or$ be two germs of piecewise-smooth vector fields at $\s$. We say that $Z_{0}$ is \textbf{sliding equivalent} to $Z$ if there exists a homeomorphism $h:\s\rightarrow\s$, which carries $S_{Z_0}$ onto $S_{Z}$ preserving the topological type of singularity and sliding orbits of $Z_{0}$ onto sliding orbits of $Z$. 
\end{defn}

The concept of sliding structural stability is defined in a natural way. We stress that this kind of stability is only concerned with the sliding features (contained in $\s$) of $Z_0\in\Or$.

The set of all sliding structurally stable piecewise-smooth vector fields will be denoted by $\Or_{SLR}$.

\subsection{Semi-Local Topological Equivalence}

In the literature, the local topological equivalence is commonly used to relate piecewise-smooth vector fields which present the same behavior around a point. In this work, we shall consider an extension of this type of equivalence with the purpose of understanding the behavior of a piecewise-smooth vector field around a compact set.

\begin{defn}\label{semiequivalencedef}
	Let $N\neq\emptyset$ be a compact subset of $\s$ and let $Z_{0},Z\in\Or$. We say that $Z_{0}$ is \textbf{semi-locally equivalent} to $Z$ at $N$ if there exist a neighborhood $U$ of $N$ in $M$ and a $\s$-invariant homeomorphism $h:U\rightarrow U$ which carries orbits of $Z_{0}$ onto orbits of $Z$.
\end{defn}

The concept of semi-local structural stability at a compact subset $N$ of $\s$ is defined in a natural way.

We remark that the local term is frequently used to refer phenomena occurring around a point, and for this reason we use the semi-local term to refer a phenomenon occurring in a neighborhood (in $M$) of a compact set. 

In particular, if $N$ is a point of $\s$, say it $p$, then Definition \ref{semiequivalencedef} turns out to be the classical local topological equivalence at a point $p\in\s$, which is extensively studied in \cite{BPS,CJn,CJ1,CJ2,GT,T2,V}. 
It follows from \cite{F,GTS} that each $Z_0\in\Or$ is locally structurally stable at regular-regular, fold-regular and cusp-regular points. In addition, in \cite{GT}, one can find a complete intrinsic characterization of piecewise-smooth vector fields which are locally structurally stables at fold-fold points.

Notice that if $N=\s$, then the semi-local equivalence is quite different from the sliding equivalence. Indeed, the sliding equivalence is concerned only with the elements lying on $\s$ (dimension $2$), whereas the semi-local equivalence at $\s$ regards all the orbits lying in an open set of $M$ (dimension $3$) containing $\s$.

\section{$\s$-Blocks}

The main purpose of this work is to classify all $Z\in\Or$ which are semi-locally structurally stable at $\s$. Now, we introduce a formal language to deal with this problem. We highlight that it is useful to prove the results obtained in the present paper for piecewise-smooth vector fields having a non-simply connected switching manifold (e.g. 2-dim torus). Also, it can be easily adapted to attack the problem in higher dimension. 

The following definition is motivated by the isolating blocks considered in \cite{CE}.

\begin{defn}\label{sigmablock}
	A subset $U\neq\emptyset$ of $\s$ is said to be a \textbf{$\s$-block} of $Z\in\Or$ if $U$ is a compact connected set such that: 
	\begin{enumerate}
		\item $\mu(S_{Z})=0$, where $\mu$ is the Borel measure on $\s$ (with respect to the euclidean metric defined on $\s$);
		\item $\textrm{int}(U)$ is a 2-dimensional manifold;
		\item $\textrm{int}(U)$ is $Z$-invariant;
		\item $\textrm{int}(U)$ is maximal, i.e., every neighborhood of $\textrm{int}(U)$ in $\s$ is not $Z$-invariant.
	\end{enumerate}
	In addition, if $U=\s$, then $U$ is said to be a \textbf{trivial $\s$-block} of $Z$.
\end{defn}

\begin{rem}
	Notice that, if we drop condition $1$ in Definition \ref{sigmablock}, we may have degenerated situations. As an example, we point out the system $X(x,y,z)=(-y,x,0)$, $Y(x,y,z)=(x,y,z)$ and $f(x,y,x)=x^2+y^2+z^2-1$. In this case, $S_{Z}=\s$, and $X$ induces a dynamics on $\s$ which is $Z$-invariant. It is easy to see that it is an structurally unstable situation (consider the pertubation $X_{\e}(x,y,z)=(-y,x,\e z)$). Also, condition $1$ is satisfied for every $Z\in\Xi_0$.
\end{rem}

Notice that, a $\s$-block of $Z\in\Xi_{0}$ is a connected component of $\overline{\s^{s}(Z)}$. Also, if $Z$ has a trivial $\s$-block, then $S_{Z}=\emptyset$. In this case, either $\s=\s^{ss}$ or $\s=\s^{us}$.

\begin{prop}\label{nosigma}
	If $Z_0=(X_0,Y_0)\in\Xi_0$ has no $\s$-blocks, then $S_{Z_0}=\emptyset$, $\s=\s^{c}$ and $Z_0$ is semi-locally structurally stable at $\s$.
\end{prop}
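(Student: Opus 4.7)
The plan is to proceed in three stages: rule out tangential singularities, exclude the entirely-sliding scenario, and then construct an explicit conjugacy by tubular neighborhoods.

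First, I would show that every elementary tangential singularity of $Z_0$ forces the existence of a sliding region, and therefore of a $\s$-block (the paper observes right after Definition \ref{sigmablock} that connected components of $\overline{\s^s(Z)}$ are $\s$-blocks for $Z\in\Xi_0$). At a fold-regular or cusp-regular point $p$ of, say, $X_0$, the sign of $Y_0f$ is definite on a neighborhood of $p$, while across the local curve $S_{X_0}$ the function $X_0f$ changes sign on $\s$; combining the two signs, one of the two local sides of $S_{X_0}$ lies in $\s^s(Z_0)$. At a fold-fold point, the transverse intersection $S_{X_0}\pitchfork S_{Y_0}$ splits a $\s$-neighborhood into four sign-quadrants of $(X_0f,Y_0f)$, exactly one of which is stable sliding. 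Because $Z_0\in\Xi_0$ also forbids equilibria of $X_0$ or $Y_0$ on $\s$ (any such point would be neither regular-regular nor an elementary tangential singularity), $S_{Z_0}$ equals precisely the tangential-singularity set, and the hypothesis that no $\s$-block exists therefore forces $S_{Z_0}=\emptyset$.

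Next, with $S_{Z_0}=\emptyset$, the continuous functions $X_0f$ and $Y_0f$ never vanish on the connected manifold $\s$, so each has constant sign. If the signs disagree, then $\s=\s^s(Z_0)$ and $\s$ itself satisfies all four items in Definition \ref{sigmablock} (items (2)--(4) hold tautologically for the ambient manifold, and item (1) is $\mu(\emptyset)=0$), producing a trivial $\s$-block and contradicting the hypothesis. Thus the signs of $X_0f$ and $Y_0f$ coincide on $\s$, proving $\s=\s^c(Z_0)$.

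For the structural stability, the open condition $X_0f\cdot Y_0f>0$ on the compact set $\s$ persists under small $\Cr$-perturbations: there is a neighborhood $\V\subset\Or$ of $Z_0$ such that $\s=\s^c(Z)$ with the same sign pattern for every $Z=(X,Y)\in\V$. Assume without loss of generality $X_0f,Y_0f>0$ on $\s$, so $X_0$ (resp.\ $Y_0$) is transverse to $\s$ pointing into $\overline{M^+}$ (resp.\ out of $\overline{M^-}$). Since $\s$ is compact, a standard flow-box/tubular neighborhood argument supplies $\e>0$ and diffeomorphisms
\[
\varphi_{X_0}:\s\times[0,\e)\to U^+\subset\overline{M^+},\quad \varphi_{Y_0}:\s\times(-\e,0]\to U^-\subset\overline{M^-}
\]
that conjugate $X_0,Y_0$ to $\partial/\partial t$; after shrinking $\V$ and $\e$, analogous $\varphi_X,\varphi_Y$ exist for every $Z\in\V$ on the same neighborhoods. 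Define $h:U\to U$ on $U=U^+\cup\s\cup U^-$ by $h|_{\s}=\mathrm{Id}$, $h=\varphi_X\circ\varphi_{X_0}^{-1}$ on $U^+$, and $h=\varphi_Y\circ\varphi_{Y_0}^{-1}$ on $U^-$. Since both halves restrict to the identity on $\s$, $h$ is a well-defined $\s$-invariant homeomorphism which carries the vertical lines (orbits of $Z_0$) to the vertical lines (orbits of $Z$), yielding the required semi-local orbital equivalence. The only subtlety is obtaining a uniform $\e$ over a whole $\Cr$-neighborhood of $Z_0$, but this follows from compactness of $\s$ together with $\Cr$-continuous dependence of the flow on initial condition and on the vector field, so it is technical rather than conceptually hard.
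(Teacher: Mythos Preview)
Your proof is correct and follows essentially the same skeleton as the paper's: tangential singularities force a nonempty sliding region (hence a $\s$-block), so $S_{Z_0}=\emptyset$; then $\s=\s^c$ is an open condition on the compact $\s$ and persists for nearby $Z$, yielding the equivalence.

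Two minor differences are worth recording. First, where the paper invokes Vishik's normal form to conclude that $\s^s$ has nonempty interior whenever $S_{Z_0}\neq\emptyset$, you do a direct sign analysis at fold-regular, cusp-regular and fold-fold points; this is more elementary and equally valid. Second, you are more explicit than the paper on two points the paper leaves implicit: you rule out the case $\s=\s^s$ by observing it would constitute a trivial $\s$-block (the paper jumps straight from $S_{Z_0}=\emptyset$ to $\s=\s^c$), and you actually construct the conjugating homeomorphism via flow-box charts on each side of $\s$, whereas the paper simply asserts that $\s^c(Z)=\s$ for all nearby $Z$ implies semi-local equivalence. Both of your additions are legitimate clarifications rather than a different strategy.
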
	
\begin{proof}
	In fact, if $S_{Z_0}\neq\emptyset$, then from Theorem \ref{Vishik}, it follows that $\s^{s}$ has non-empty interior in $\s$, which means that $Z_0$ would have at least one $\s$-block. It follows that $S_{Z_0}=\emptyset$ and consequently $\s=\s^{c}$.
	
	From the continuity of the maps $F,G:\Xr\times\s\rightarrow \R$, given by $F(X,p)=Xf(p)$ and $G(Y,p)=Yf(p)$, and compactness of $\s$, there exist neighborhoods $\U$ of $X_0$ and $\V$ of $Y_0$, such that $Xh(p)Yh(p)>0$, for each $X\in\U$, $Y\in\V$ and $p\in \s$.
	
	Therefore, $\s^{c}(Z)=\s$, and $Z_0$ and $Z$ are semi-locally equivalents at $\s$,  for each $Z=(X,Y)\in\U\times\V$.  
\end{proof}	

\begin{defn}
	A vector field $Z_0\in\Or$ is said to be \textbf{$\s$-block structurally stable} if either $Z_0$ has no $\s$-blocks or $Z_0$ is semi-locally structurally stable at each $\s$-block of $Z_0$. denote the set of all $Z\in\Or$ which are $\s$-block structurally stable by $\Or_{\s}$.
\end{defn}	

\begin{prop}\label{reduction}
	Let $Z_0\in\Xi_0$. Then, $Z_0$ is $\s$-block structurally stable if and only if $Z_0$ is semi-locally structurally stable at $\s$.
\end{prop}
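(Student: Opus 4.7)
The plan is to prove the two implications separately, with the converse being where the real work lies.

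For the direction ($\Leftarrow$), the argument is essentially by restriction. If $Z_0$ has no $\s$-blocks, $\s$-block structural stability holds by the first clause of its definition. Otherwise, given the global $\s$-invariant self-homeomorphism $h\colon V\to V$ of a neighborhood of $\s$ supplied by Definition \ref{semiequivalencedef}, one would extract a semi-local equivalence at each $\s$-block $U_i$ by passing to a sufficiently small $h$-saturated neighborhood of $U_i$ inside $V$, chosen disjoint from the other blocks; since $h$ carries orbits of $Z_0$ to orbits of $Z$ and preserves $\s$, this restriction is again a $\s$-invariant orbital equivalence.

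For the converse ($\Rightarrow$), the trivial case of no $\s$-blocks is handled directly by Proposition \ref{nosigma}. Otherwise $Z_0$ has $\s$-blocks $U_1,\ldots,U_n$, which is a finite collection because $Z_0\in\Xi_0$ forces $S_{Z_0}$ to be a finite one-complex (smooth fold curves of $X_0$ and $Y_0$ meeting transversally at fold-fold points and terminating at cusps) and $\s$ is compact. The hypothesis supplies, for each $i$, a neighborhood $\W_i\subset\Or$ of $Z_0$, a neighborhood $V_i\subset M$ of $U_i$, and for each $Z\in\W_i$ a $\s$-invariant homeomorphism $h_i\colon V_i\to V_i$ carrying orbits of $Z_0$ to orbits of $Z$. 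For every $Z$ in the common neighborhood $\W=\bigcap_i\W_i$, the plan is to splice the $h_i$ into a single orbital equivalence defined on a neighborhood of all of $\s$.

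The structural observation driving the splicing is that the compact set $K:=\s\setminus\bigcup_i\mathrm{int}(U_i)$ lies in the crossing region $\s^c(Z_0)$, since the $\s$-blocks cover $\overline{\s^s(Z_0)}$. By continuity of the Lie derivatives in both $Z$ and $p$ and by compactness of $K$, after shrinking $\W$ one may assume $K\subset\s^c(Z)$ for every $Z\in\W$, with $X$ and $Y$ transverse to $\s$ and with matching signs of $Xf$, $Yf$ along $K$. A standard flow-box construction then yields a canonical orbit-preserving $\s$-invariant homeomorphism $h_c$ on a tubular neighborhood $V_c$ of $K$, obtained by fixing $K$ pointwise on $\s$ and rescaling flow time along the two transverse half-orbits through each point.

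The main obstacle is the patching of $h_c$ with the $h_i$ on the overlaps $V_i\cap V_c$, which sit over a collar of $\partial U_i\cap\s$ inside the crossing region. On such an overlap both homeomorphisms are orbital equivalences, hence differ only by a continuous orbit-wise reparameterization; using bump functions in a normal coordinate to $\partial U_i\cap \s$ one interpolates between them through an orbit-preserving family of homeomorphisms. The delicate point is compatibility along the tangency curves themselves, where the flow-box construction degenerates and $h_i$ is constrained by the sliding and tangential dynamics inside $U_i$. Here the $\s$-invariance of $h_i$, together with a collar-neighborhood theorem for the stratified pair $(\s, S_{Z_0})$, must be exploited to arrange that the interpolation restricts to a well-defined homeomorphism of $\s$ and is consistent with the sliding foliation across the boundary. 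Once this patching is in place, verifying that the resulting map $h$ is a $\s$-invariant self-homeomorphism of $V=V_c\cup\bigcup_iV_i$ carrying orbits of $Z_0$ onto orbits of $Z$ is straightforward.
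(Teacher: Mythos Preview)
Your overall architecture matches the paper's: the no-block case via Proposition~\ref{nosigma}, finiteness of blocks from $Z_0\in\Xi_0$, the observation that the complement of the blocks in $\s$ lies in $\s^c$, and a flow-box extension over that complement. Two points are worth noting, however.

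For the direction ($\Leftarrow$) you are working harder than necessary. The definition of semi-local equivalence at a compact set $N$ only requires \emph{some} neighborhood $U$ of $N$ and a $\s$-invariant orbit-equivalence $h\colon U\to U$; nothing forces $U$ to be small. So if $Z_0$ is semi-locally structurally stable at $\s$ with data $(U,h)$, then the very same pair $(U,h)$ witnesses semi-local equivalence at every $\s$-block $U_i\subset\s\subset U$. No restriction or saturation is needed; this is why the paper calls this implication trivial.

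For ($\Rightarrow$), the paper sidesteps your interpolation problem entirely. Rather than building an independent $h_c$ on the crossing collar and then patching it with the $h_i$ on overlaps, the paper first chooses the compact neighborhoods $V_i$ so that $\partial V_i\cap\s\subset\s^c(Z_0)\cap\s^c(Z)$ (so the boundary never meets the tangency set), and then uses the freedom in constructing the local equivalences to arrange $h_i|_{\partial V_i}=\mathrm{id}$. With that normalization one simply sets $h=h_i$ on $V_i$ and $h=\mathrm{id}$ on $\s\setminus\bigcup_i V_i$, and extends off $\s$ by the flows. This removes what you flag as ``the delicate point'' near the tangency curves: there is no overlap region touching $S_{Z_0}$, and no bump-function interpolation between two competing homeomorphisms is required. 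Your approach should still go through, but it is noticeably more laborious, and the concern about the flow-box degenerating along $S_{Z_0}$ is an artifact of placing the gluing region too close to the tangency set.
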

\begin{proof}
	To prove the non-trivial implication, assume that $Z_0$ is $\s$-block structurally stable. If $Z_0$ has no $\s$-blocks then, from Proposition \ref{nosigma}, $Z_0$ is semi-locally structurally stable at $\s$.
	
	Let $U_1,\cdots, U_k$ be all the $\s$-blocks of $Z_0$. From hypothesis, for each $i=1,\cdots,k$, there exists a neighborhood $\U_i$ of $Z_0$ in $\Or$ such that $Z_0$ and $Z$ are semi-locally equivalent at $U_i$, for each $Z\in\U_i$.
	
	Take $\U= \U_1\cap\cdots\U_k$, and let $Z\in\U$. Hence, there exist disjoint compact neighborhoods $V_i$ of $U_i$ in $M$, and homeomorphisms $h_i:V_i\rightarrow V_i$ which carries orbits of $Z_0$ onto orbits of $Z$, for each $i=1,\cdots,k$. Notice that $Z_0$ and $Z$ are transverse to $\partial V_i\cap\s$ and we may construct $h_i$ such that $h_i|_{\partial V_i}=id$.
	
	Now, let $V=V_1\cup\cdots\cup V_k$. Since $Z_0\in\Xi_0$, $\s\setminus V\subset\s^c(Z_0)$ and $\s\setminus V\subset\s^c(Z)$. Setting $h|{V_i}=h_i$ and $h|_{\s\setminus V}=id$, we can use the flows of $Z_0$ and $Z$ to construct a homeomorphism $h:U\rightarrow U$ (see \cite{GTS}), carrying orbits of $Z_0$ onto orbits of $Z$, where $U$ is a neighborhood of $\s$. Hence $Z_0$ is semi-locally structurally stable at $\s$.
\end{proof}

From Proposition \ref{reduction}, to classify the vetor fields in $\Or$ which are robust around $\s$, it is suficient to understand the $\s$-block structurally stable systems. Notice that all results in this section remain valid if we drop the simply connectedness of the switching manifold.

\section{Main Goal and Statement of the Main Results}

Our strategy is to use informations of $Z\in\Or$ around points of $\s$ to understand its behavior around the switching manifold. In order to do this, we use the concepts of sliding and $\s$-block structurally stability introduced in Section \ref{equivs} to formalize the problem and we give a complete characterization of the sets $\Or_{SLR}$ and $\Or_{\s}$.

Let $\s_0(SLR)$ be the set of $Z=(X,Y)\in\Or$ such that:

\begin{enumerate}
	\item[$G$)] $Z\in\Xi_0$;
	\smallskip	
	\item[$F_{1}$)] If $p\in \s$ is either a hyperbolic or an elliptic fold-fold singularity of $Z$ then $F_{Z}^{N}$ has no center manifold in $V_p\cap\overline{\s^s}$, where $V_p$ is a  neighborhood of $p$ in $\s$;	
	\smallskip	
	\item[$F_{2}$)] If $p\in \s$ is a parabolic fold-fold singularity of $Z$ then $F_{Z}^{N}$ is transient in $V_p\cap\overline{\s^s}$ or it has a hyperbolic singularity at $p$, where $V_p$ is a  neighborhood of $p$ in $\s$;	
	\smallskip												
	\item[$F_{3}$)] There is no connection between $\s$-separatrices of fold-fold points of $F_{Z}$ in $\s^s$;
	\smallskip	
	\item[$F_{4}$)] There is no connection between a $\s$-separatrix of fold-fold and a saddle separatrix of $F_{Z}^{N}$ contained in $\s^{s}$;							
	\smallskip	
	\item[$I_{1}$)] $F_{Z}^N|_{\overline{\s^{s}}}$ has a finite number of pseudo-equilibria. All of them are hyperbolic and they are contained in int($\s^{s}$);
	\smallskip	
	\item[$I_{2}$)] $F_{Z}^N|_{\overline{\s^{s}}}$ has a finite number of periodic orbits. All of them are hyperbolic and they are contained in int($\s^{s}$);
	\smallskip	
	\item[$I_{3}$)] $F_{Z}^N$ doesn't present any saddle connection in $\overline{\s^{s}}$;
	\smallskip	
	\item[$B_{1}$)] There is no orbit of $F_{Z}^N$ contained in $\s^s$ connecting two tangency points of $F_{Z}^N$ with $\partial\s^{s}$; 				
	\smallskip	
	\item[$B_{2}$)] Each saddle separatrix of $F_{Z}^N$ is transversal to $\partial\s^{s}$ (except at fold-fold points).
	\smallskip	
	\item[$R$) ] $F_{Z}^{N}$ has no recurrent orbit.	
\end{enumerate}	

\begin{thmA}[Peixoto's Theorem - Sliding Version]\label{Peixotosl}
	The set $\Or_{SLR}$ is residual in $\Or$ and it coincides with $\s_0(SLR)$.
\end{thmA}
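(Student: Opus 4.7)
The plan is to prove Theorem A in two parts: first the characterization $\Or_{SLR}=\s_0(SLR)$, then the residuality of this set in $\Or$. The key observation is that the normalized sliding vector field $F_Z^N$, well-defined on $\overline{\s^{s}}$ by Definition \ref{NSVF} and Lemma \ref{sliding}, is a smooth vector field on a $2$-dimensional manifold with boundary, so that the argument reduces, with appropriate modifications at the fold-fold singularities lying on $\partial\s^{s}$, to a Peixoto-type theorem for $\Cr$ vector fields on compact $2$-manifolds with boundary.

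For the sufficiency $\s_0(SLR)\subset\Or_{SLR}$, I would construct, for any $Z\in\Or$ sufficiently $\Cr$-close to a prescribed $Z_0\in\s_0(SLR)$, a homeomorphism $h\colon\s\to\s$ carrying $S_{Z_0}$ onto $S_Z$ and conjugating the sliding phase portraits. Condition $G$ combined with Proposition \ref{vishik_ns} ensures that $S_Z$ and its singularities perturb continuously, and conditions $F_1$--$F_2$ fix the local phase portrait of $F_Z^N$ at each fold-fold point via its hyperbolic, nodal or saddle structure. Hyperbolicity of pseudo-equilibria and periodic orbits ($I_1$, $I_2$) guarantees persistence of these invariant sets. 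Absence of saddle-type connections ($F_3$, $F_4$, $I_3$), together with the transversality conditions $B_1$--$B_2$ at the smooth part of $\partial\s^{s}$ and the no-recurrence condition $R$, allow one to decompose $\overline{\s^{s}}$ into finitely many canonical regions in the spirit of Markus--Peixoto; local conjugacies on each region are then glued together via tubular flow boxes. Finally, $h$ is extended to the crossing region $\s^{c}$ by continuity of the splitting $\s=\s^c\cup\s^s\cup S_Z$.

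For the necessity $\Or_{SLR}\subset\s_0(SLR)$, I proceed by contraposition: if any listed condition fails for $Z_0=(X_0,Y_0)$, then I construct a $\Cr$-small perturbation of $Z_0$ that alters the topological type of the sliding portrait. Using the formula $F_Z^N=Yf\cdot X-Xf\cdot Y$, a perturbation of the pair $(X,Y)$ induces a corresponding perturbation of $F_Z^N$; one verifies that this correspondence produces a sufficiently rich family of perturbations of $F_Z^N|_{\overline{\s^{s}}}$ so that the classical devices of planar bifurcation theory apply. Concretely, a non-hyperbolic pseudo-equilibrium may be split or eliminated; a saddle connection broken; a non-transverse separatrix at $\partial\s^{s}$ rotated to become transverse; a recurrent trajectory destroyed by a Peixoto-type closing argument; and the degenerate configurations at fold-fold singularities unfolded by exploiting their normal forms.

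The residuality of $\s_0(SLR)$ then follows by showing that each of the conditions $G$, $F_1$--$F_4$, $I_1$--$I_3$, $B_1$--$B_2$, $R$ defines a residual (in fact open and dense, except possibly for $R$) subset of $\Or$. Condition $G$ is handled by Proposition \ref{vishik_ns}; the remaining conditions are direct adaptations of classical genericity statements for smooth vector fields on compact $2$-manifolds with boundary, verified here by exhibiting the explicit perturbations used in the necessity step. I expect the main difficulty to lie in the fold-fold singularities, where $F_Z^N$ vanishes by construction and its local phase portrait depends on higher-order jets of $(X,Y)$: cataloguing the stable portraits compatible with $F_1$--$F_4$ and verifying their robustness will require a careful case analysis separating the hyperbolic, parabolic and elliptic types displayed in Figure \ref{types}, together with a precise description of how the $\s$-separatrices meet $\partial\s^s$ tangentially.
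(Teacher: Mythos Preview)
Your proposal is correct and follows essentially the same approach as the paper: reduce the sliding problem to a Peixoto-type theorem for $F_Z^N$ on the compact $2$-manifold-with-boundary $\overline{\s^{s}}$, treating the fold-fold points on $\partial\s^{s}$ as the only genuinely new ingredient. The paper implements this by first isolating each fold-fold point in a small neighborhood where the sliding dynamics falls into one of five explicit model portraits (labelled I--V, derived from conditions $F_1$--$F_2$), constructing the local conjugacy there by hand, and then observing that the complement satisfies the hypotheses of the classical Peixoto theorem for planar regions with piecewise-smooth boundary \cite{PP} (including the vertex conditions B5--B6 there), so that the global sliding homeomorphism is obtained by extending these local pieces via Peixoto's own machinery rather than a separate Markus-type canonical-region decomposition.
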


Now, consider the following properties (see \cite{GT} for more details):
\smallskip	
\begin{itemize}
	\item[$\Xi(P)$:] If $p\in\s$ is an invisible-visible fold-fold point of $Z\in\Or$, then the germ of the involution $\phi_{X}$ at $p$ associated to $Z$ satisfies:
	
	\begin{enumerate}
		\item $\phi_{X}(S_{Y})\pitchfork S_{Y}$ at $p$;
		
		\item $F_{Z}^{N}$ and $\phi_{X}^{*}F_{Z}^{N}$ are transversal at each point of $\s^{ss}\cap\phi_{X}(\s^{us})$;
		
		\item $\phi_{X}(S_{Y})\pitchfork F_{Z}^{N}$ at $p$.
	\end{enumerate} 
	\smallskip		
	
	\item[$\Xi(E)$:]  If $p\in\s$ is an elliptic fold-fold point of $Z\in\Or$, then the  germ of the first return map $\phi_{Z}$ at $p$ associated to $Z$ has a fixed point at $p$ of saddle type with both local invariant manifolds $W^{u,s}_{loc}$ contained in $\s^{c}$.
\end{itemize}

\begin{rem}
	If $Z$ has a visible-invisible fold-fold point at $p$, then the roles of $X$ and $Y$ are interchanged in the property $\Xi(P)$.
\end{rem}

Let $\s_{0}$ be the set of $Z\in\Or$ satisfying the following properties:
\begin{itemize}
	\item[$S$)] $Z\in \Or_{SLR}$;
	\item[$F$)] If $p\in\s$ is a fold-fold point of $Z$ then either $\Xi(P)$ or $\Xi(E)$ is satisfied at $p$. 
\end{itemize}

\begin{thmB}[Classification of $\Or_{\s}$] \label{sigmablockclass} The following statements hold:
	\begin{itemize}
		\item[($i$)] $\Or_{\s}=\s_0$;
		\item[($ii$)] $\Or_{\s}$ is not residual in $\Or$;
		\item[($iii$)]$\Or_{\s}$ is residual in $\s(E)$, where $\s(E)$ is the set of $Z\in\Or$ satisfying $\Xi(E)$. Moreover, $\s(E)$ is maximal with respect to this property.		
	\end{itemize}
\end{thmB}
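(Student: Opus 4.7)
The plan for part (i) is to reduce, via Proposition \ref{reduction}, to verifying that semi-local stability at each $\s$-block $U$ of $Z\in\Xi_0$ is equivalent to the conjunction of two local conditions: (a) sliding structural stability of the normalized sliding field restricted to $U$, and (b) local structural stability of $Z$ at every tangential singularity on $\partial U$. Because $Z\in\Xi_0$, a $\s$-block is a connected component of $\overline{\s^s(Z)}$ bounded by smooth fold arcs with isolated cusp and fold-fold endpoints. At regular-regular, fold-regular and cusp-regular points local stability is automatic by \cite{F,GTS,T2}; at fold-fold points it is characterized by $\Xi(P)$ or $\Xi(E)$ via \cite{GT}. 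Recalling Theorem A, condition (a) assembled over all blocks is precisely $Z\in\Or_{SLR}$, and (b) across every fold-fold point is precisely property $F$. Thus $\s$-block structural stability amounts to $S\wedge F$, i.e.\ $\Or_\s=\s_0$.

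For the nontrivial inclusion $\s_0\subseteq\Or_\s$ I would, given $Z\in\s_0$ and $Z'$ nearby, work one $\s$-block at a time. First I build a sliding equivalence $h_\Sigma:U\to U'$ between corresponding blocks by Theorem A. Then, at every fold-fold vertex of $\partial U$, I invoke the local $3D$ equivalences supplied by \cite{GT} under $\Xi(P)$ or $\Xi(E)$; these can be arranged to coincide with $h_\Sigma$ along the incident fold arcs, since both maps carry normalized sliding orbits to normalized sliding orbits and the extension off $\s$ along the transverse $X$- and $Y$-foliations is canonical once the trace on $\s$ is fixed. Extending $h_\Sigma$ fiberwise along $X$- and $Y$-orbits through a small tube over $U$ and pasting the block-wise homeomorphisms with the identity over $\s^c$, as in the proof of Proposition \ref{reduction}, produces the required semi-local equivalence at $\s$. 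The converse is obtained by restriction: a semi-local equivalence restricts to a sliding equivalence on each block and to a local equivalence at each fold-fold vertex, forcing $S$ and $F$.

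For (ii), I would exhibit a nonempty open set of $Z$ with an elliptic fold-fold whose first-return map $\phi_Z$ stably violates $\Xi(E)$ (for instance, has no fixed point, or has a hyperbolic saddle with one local branch crossing into $\s^s$). These configurations persist under perturbation but the local $3D$ dynamics varies topologically inside this open set, so the complement of $\Or_\s$ has nonempty interior. For (iii), within the ambient space $\s(E)$ the elliptic obstruction is disabled by assumption, and the remaining defining conditions of $\s_0$ are the generic conditions of Theorem A together with $\Xi(P)$ at parabolic fold-folds; each cuts out an open and dense subset of $\s(E)$, and their countable intersection gives $\Or_\s\cap\s(E)$ residual in $\s(E)$. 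Maximality of $\s(E)$ follows from the argument of (ii): any set strictly larger than $\s(E)$ must contain systems with elliptic fold-folds violating $\Xi(E)$, near which an open set of topologically distinct perturbations lies outside $\Or_\s$, preventing residuality. \emph{The main obstacle} is the compatibility step in (i): reconciling the $2D$ sliding equivalence on each block with the intrinsic $3D$ local equivalences at its fold-fold vertices, so that the global fiberwise extension along $X$- and $Y$-orbits is a genuine homeomorphism of a tubular neighborhood of $\s$. The delicate point is controlling the involution and the first-return dynamics at parabolic and elliptic fold-folds exactly along the boundary arcs shared between a block and a crossing region.
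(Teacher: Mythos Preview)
Your approach is essentially the paper's: start from the sliding equivalence of Theorem~A on each $\s$-block, upgrade it to a $3D$ equivalence using the local classification of \cite{GT} at fold-fold vertices, extend along the $X$- and $Y$-foliations, and defer items~(ii)--(iii) to Corollary~E of \cite{GT}. Two points are worth sharpening.

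First, the compatibility obstacle you flag is resolved in the paper in the \emph{opposite order} from what you propose. You suggest arranging the $3D$ local equivalences of \cite{GT} to match a pre-built sliding equivalence $h_\Sigma$; the paper instead fixes the local $3D$ equivalences $h_q$ at each fold-fold vertex first, and then builds the sliding equivalence so as to extend them (Remark~\ref{remsl}, proved in the course of Theorem~A: any local equivalence at a vertex extends to a full sliding equivalence on the block). This direction is the one actually justified by the machinery in the paper, and avoids having to tweak the delicate $3D$ normal forms of \cite{GT}.

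Second, your ``fiberwise extension along $X$- and $Y$-orbits'' is too coarse near $S_{Z_0}$: the orbits do not fiber transversally there. The paper invests substantial effort (Section~8.1--8.2) in constructing, via Vishik's normal forms, explicit local $2$-dimensional invariant manifolds $W_{p_0}$ at each fold-regular, cusp-regular and fold-fold point, and then gluing them (by compactness of $S_{Z_0}$) into a global invariant manifold of the tangency set inside a laminated tubular neighborhood $\Lambda$ of $\s$. The homeomorphism is then built by first matching these invariant manifolds and only afterwards using transversality of the flows to $\partial\Lambda$ off them. Your proposal would benefit from acknowledging this intermediate structure.

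A minor correction on (ii): the germ of the first return map $\phi_Z$ at an elliptic fold-fold always fixes $p$, so ``has no fixed point'' is not a valid obstruction. The persistent obstructions from \cite{GT} are rather that the fixed point fails to be a saddle (e.g.\ the linearization has complex eigenvalues, which is an open condition) or, as you also say, that a local invariant branch of the saddle lies in $\overline{\s^s}$.
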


\section{Robustness of Tangency Sets}

In this section we discuss about the structure of the tangency set of $Z\in\Xi_0$. Firstly, we analyze the local behavior of an elementary tangential singularity and secondly, some global features of the tangency set of $Z$ are also discussed.

\subsection{Local Analysis} 

Let $X\in\Xr$ be a $\Cr$ vector field defined around $\s$ (which is the commom boundary of $M^+$ and $M^-$). The local behavior of $X$ at a point $p\in\s$ is a very matured topic and the results of this section can be found in \cite{V,T1,ST} from a different point of view. 

The following propositions provide a geometric interpretation of fold and cusp points in $\s$.

\begin{prop} \label{foldloc}
	Let $X_0\in \Xr$ having a fold point at $p_0\in\s$, then there exists a neighborhood $\V$ of $X_0$ in $\Xr$ and a neighborhood $V$ of $p_0$ in $\s$ such that:
	\begin{itemize}
		\item[(a)] For each $X \in \V$, there exists a unique $\mathcal{C}^{r}$ curve of fold points $\gamma_{X} \subset V$ of $X$ in $\s$ which intersects $\partial V$ transversally at only two points;
		\item[(b)] $p_0 \in \gamma_{X_0}$ and $\sgn(X^2f(p))=\sgn(X_0^{2}f(p_0))$, for each $p\in\gamma_{X}$ and $X\in \V$.  
	\end{itemize}
\end{prop}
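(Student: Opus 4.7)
The plan is to apply the parametric Implicit Function Theorem to the map $F:\Xr\times\s\to\R$ given by $F(X,p)=Xf(p)$, and then supplement it with continuity and compactness arguments to control both the boundary behavior of the resulting fold curves and the sign of $X^{2}f$ along them. First I would verify that $X_{0}f|_{\s}$ is a submersion at $p_{0}$: since $X_{0}f(p_{0})=0$ the vector $X_{0}(p_{0})$ is tangent to $\s$ at $p_{0}$, and evaluating the differential of $X_{0}f|_{\s}$ on $X_{0}(p_{0})$ returns $X_{0}^{2}f(p_{0})\neq 0$, so $d(X_{0}f|_{\s})(p_{0})\neq 0$. The parametric IFT then provides neighborhoods $\V_{0}$ of $X_{0}$ in $\Xr$ and $V_{0}$ of $p_{0}$ in $\s$ together with a $\Cr$ map $(X,t)\mapsto\gamma_{X}(t)$ such that inside $V_{0}$ the zero set of $Xf|_{\s}$ coincides with the image of the regular arc $\gamma_{X}$, and $\gamma_{X}$ depends $\Cr$-continuously on $X$.

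Next I would fix a closed disk $V\subset V_{0}$ centered at $p_{0}$ chosen so that $\gamma_{X_{0}}$ crosses $\partial V$ transversally at exactly two points; this is possible because $\gamma_{X_{0}}$ is a regular one-dimensional arc inside the two-dimensional disk $V_{0}$. By the continuous dependence of $\gamma_{X}$ on $X$, shrinking $\V_{0}$ to a neighborhood $\V$ guarantees that for every $X\in\V$ the arc $\gamma_{X}$ is $\Cr$-close to $\gamma_{X_{0}}$ in $V_{0}$, hence crosses $\partial V$ transversally at exactly two points and is the unique connected component of $\{Xf=0\}\cap V$. This yields item (a) and the inclusion $p_{0}\in\gamma_{X_{0}}$ from item (b). For the sign statement I would use continuity of the evaluation $(X,p)\mapsto X^{2}f(p)$ on $\Xr\times V$ together with compactness of the closed arc $\gamma_{X}\cap V$: since $X_{0}^{2}f(p_{0})\neq 0$, a uniform continuity argument (shrinking $\V$ once more if necessary) forces $X^{2}f(p)$ to keep the sign of $X_{0}^{2}f(p_{0})$ for every $p\in\gamma_{X}$ and every $X\in\V$, so every point of $\gamma_{X}$ is genuinely a fold of the same visibility as $p_{0}$.

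The main obstacle I anticipate is not the local IFT step itself, but rather ensuring \emph{global} uniqueness of the fold curve inside the fixed disk $V$ and excluding spurious additional components of $\{Xf=0\}\cap V$ under small perturbations. The parametric IFT controls zeros only in a neighborhood of the base point $(X_{0},p_{0})$, so one must promote the submersion property of $X_{0}f|_{\s}$ at $p_{0}$ to a \emph{uniform} submersion property of $Xf|_{\s}$ on the whole disk $V$, valid for all $X$ in a sufficiently small $\V$. This relies essentially on the continuous dependence of the first-order partial derivatives of $Xf|_{\s}$ on $X$ in the $\Cr$ topology, together with compactness of $\overline{V}$. Once this uniform nondegeneracy of $d(Xf|_{\s})$ on $V$ is secured, the transversal two-point intersection with $\partial V$ and the persistence of $\sgn(X^{2}f)$ follow by routine continuity and compactness arguments.
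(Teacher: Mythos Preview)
Your approach is correct and essentially the same as the paper's: both apply the parametric Implicit Function Theorem (for Banach spaces) to $(X,p)\mapsto Xf(p)$ and then invoke continuity of $(X,p)\mapsto X^{2}f(p)$ for the visibility condition. The only notable difference in execution is that the paper works in a local chart $\phi$ of $\Sigma$, solves $Xf(\phi(x_{1},x_{2}))=0$ for $x_{2}=\alpha(X,x_{1})$, and then takes $V=\phi([a_{0},b_{0}]\times(-\e_{2},\e_{2}))$; writing $\gamma_{X}$ explicitly as a graph over the $x_{1}$-axis makes both the transversal two-point intersection with $\partial V$ and the global uniqueness of $\{Xf=0\}\cap V$ automatic, neatly bypassing the obstacle you flag in your final paragraph.
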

\begin{proof}	
	Let $F: \Xr \times \s \rightarrow \R$ given by $F(X,p)= X^{2}f(p)$. It satisfies $F(X_0,p_0)\neq0$. From continuity, there exist neighborhoods $\V_{1}$ of $X_0$ in $\Xr$ and $V_{1}$ of $p_0$ in $\s$ such that $X^{2}f(p)\neq 0$ for each $X \in \V_{1}$ and $p\in V_{1}$, and the sign of $X_0^{2}f(p_0)$ is preserved.	
	
	Let $\phi: (-\e_{1},\e_{1})\times (-\e_{2},\e_{2})\rightarrow V_{2}$ be a local chart of $\s$ around $p_0$ such that $V_{2}\subset V_{1}$, and notice that $\phi$ is a $\Cr$ diffeomorphism. Consider $G:\Xr\times (-\e_{1},\e_{1})\times (-\e_{2},\e_{2})\rightarrow \R$ given by $G(X,x_{1}, x_{2})= Xf(\phi(x_{1},x_{2}))$, and notice that $G$ is a $\mathcal{C}^{r}$ function such that $G(X_0,0,0)=0$.

	Since $X_0^{2}f(p)\neq 0$, we have that $dX_0f(p_0)$ is a nonzero linear transformation, and since $\phi$ is a diffeomorphism, it follows that $\dfrac{\partial G}{\partial (x_1,x_2)}(X_0,0,0)= dX_0f(p_0) \circ d\phi(0,0)$ is nonzero. We conclude that $\dfrac{\partial G}{\partial x_{1} }(X_0,0,0)\neq 0$ or $\dfrac{\partial G}{\partial x_{2}}(X_0,0,0)\neq 0$. 
	
	Without loss of generality, assume that $\dfrac{\partial G}{\partial x_{2}}(X_0,0,0)\neq 0$, now we can use the Implicit Function Theorem (for Banach Spaces) to find a neighborhood $\V$ of $X_0$ in $\Xr$ contained in $\V_1$, real numbers $a,b$ such that $-\e_1<a<0<b<\e_1$, and a $\mathcal{C}^{r}$ function $\alpha:\V\times(a,b)\rightarrow (-\e_{2},\e_{2})$ such that $G(X, x_{1},x_{2})=0$ with $X\in \V$ and $x_{1}\in (a,b)$ if and only if $x_{2}=\alpha(X,x_{1})$.
	
	Notice that, for each $X\in\V$, the curve $c_{X}:(a,b)\rightarrow (a,b)\times(-\e_2,\e_2)$ given by $c_{X}(t)=(t,\alpha(X,t))$, is transverse to each horizontal line $x_{1}=x_0$, with $x_0\in(a,b)$.

	Setting $V=\phi([a_0,b_0]\times(-\e_{2},\e_{2}))$, for some $a<a_0<0<b_0<b$, it follows that, for each $X\in\V$, the curve $\gamma_{X}=\phi\circ c_{X}$ intersects $\partial V$ (transversally) only at the points $\gamma_{X}(a_0)$ and $\gamma_{X}(b_0)$, therefore it satisfies part (a) of the statement. In addition, $\gamma_{X_0}(0)=\phi(0,0)=p_0$, which proves part (b).
	
\end{proof}

\begin{prop} \label{cusploc}
	Let $X_0\in \Xr$ having a cusp point at $p_0\in\s$. Then, there exist a neighborhood $\V$ of $X_0$ in $\Xr$, real numbers $a_0<0<b_0$, and a neighborhood $V$ of $p_0$ in $\s$ such that, for each $X \in \V$:
	\begin{itemize}
		\item[(a)] there exists a unique $\mathcal{C}^{r}$ curve $\gamma_{X}:[a_0,b_0]\rightarrow V$ of tangential singularities of $X$ in $V$ which intersects $\partial V$ transversally at only two points;
		\item[(b)] there exists $a_0<t(X)<b_0$ such that $p(X)=\gamma_X(t(X))$ is a cusp point of $X$. In addition, $\sgn(X^3f(p(X)))=\sgn(X_0^{3}f(p_0))$.		
		\item[(c)] $\gamma_X(t)$ is a fold point of $X$ for every $t\in[a_0,b_0]$ such that $t\neq t(X)$. In addition $X^2f(\gamma_X(t))X^{2}f(\gamma_X(s))<0$, for each $a_0\leq t<t(X)$ and $t(X)<s\leq b_0$.  
	\end{itemize}
\end{prop}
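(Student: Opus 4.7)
The plan is to mirror the IFT argument of Proposition \ref{foldloc} to obtain the tangency curve, and then perform a secondary implicit function step along that curve to isolate the unique cusp point of $X$ near $p_0$. The whole argument is driven by one linear-algebraic consequence of the cusp hypothesis: since $df(p_0)$ annihilates $T_{p_0}\s$ and $\{df(p_0),dX_0f(p_0),dX_0^2f(p_0)\}$ is linearly independent in $T_{p_0}^*M$, the restrictions $dX_0f(p_0)|_{T_{p_0}\s}$ and $dX_0^2f(p_0)|_{T_{p_0}\s}$ form a basis of $T_{p_0}^*\s$. In particular both are nonzero and their respective kernels in the $2$-dimensional space $T_{p_0}\s$ are distinct lines.

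The first of these nondegeneracies lets me repeat the IFT argument of Proposition \ref{foldloc} verbatim to the map $G(X,x_1,x_2)=Xf(\phi(x_1,x_2))$, producing, after shrinking, a neighborhood $\V_1$ of $X_0$, real numbers $a<a_0<0<b_0<b$, and a $\Cr$ graph $\gamma_X:[a_0,b_0]\to V$ parametrizing $S_X\cap V$ and crossing $\partial V$ transversally only at $\gamma_X(a_0)$ and $\gamma_X(b_0)$. This is precisely part (a).

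For (b), I would define $H:\V_1\times(a,b)\to\R$ by $H(X,t)=X^2f(\gamma_X(t))$. Differentiating $X_0f(\gamma_{X_0}(t))\equiv 0$ at $t=0$ shows $\gamma_{X_0}'(0)\in\ker dX_0f(p_0)|_{T_{p_0}\s}$, and this vector is nonzero since $\gamma_{X_0}$ is a regular graph. Because the two kernels in $T_{p_0}\s$ are distinct, $\gamma_{X_0}'(0)\notin\ker dX_0^2f(p_0)|_{T_{p_0}\s}$, so $\partial_t H(X_0,0)=dX_0^2f(p_0)\cdot\gamma_{X_0}'(0)\neq 0$. A second application of the Implicit Function Theorem supplies a $\Cr$ map $X\mapsto t(X)\in(a_0,b_0)$, defined on some $\V\subset\V_1$, such that $p(X):=\gamma_X(t(X))$ is the unique zero of $H(X,\cdot)$ near $t=0$. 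Continuity of $(X,p)\mapsto X^3f(p)$ and the openness of the linear independence condition $\{df,dXf,dX^2f\}$ then force $p(X)$ to be a genuine cusp point of $X$ with $\sgn(X^3f(p(X)))=\sgn(X_0^3f(p_0))$, after one further shrinking of $\V$.

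For (c), by continuity the derivative $\frac{d}{dt}X^2f(\gamma_X(t))=dX^2f(\gamma_X(t))\cdot\gamma_X'(t)$ stays nonzero in a neighborhood of $(X_0,t(X_0))$; shrinking $[a_0,b_0]$ if necessary ensures that it is nonzero on the entire interval for every $X\in\V$. Hence $t\mapsto X^2f(\gamma_X(t))$ has a simple transverse zero at $t=t(X)$ and no other zeros on $[a_0,b_0]$, so $\gamma_X(t)$ is a fold point for every $t\neq t(X)$ and $X^2f$ has opposite signs on the two sides of $t(X)$. The only delicate step is verifying $\partial_t H(X_0,0)\neq 0$, which geometrically says that the tangency curve $S_{X_0}$ is transverse inside $\s$ to the surface $\{X_0^2f=0\}$ at $p_0$; everything else reduces to the Implicit Function Theorem together with openness of the relevant nondegeneracy conditions.
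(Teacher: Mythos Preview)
Your argument is correct and follows the same overall strategy as the paper: use the Implicit Function Theorem to produce the tangency curve, then locate the cusp on that curve, then deduce the sign change of $X^2f$. The only substantive difference is in how part (b) is organized. The paper applies the Implicit Function Theorem directly in $\rn{3}$ to the map $H(X,p)=(f(p),Xf(p),X^2f(p))$, using the linear independence of $\{df,dX_0f,dX_0^2f\}$ at $p_0$ as invertibility of $\partial H/\partial p$, and thereby obtains the cusp $p(X)=\beta(X)$ as a point of $M$ before observing that it must lie on $\gamma_X$. You instead restrict to the already-constructed curve and apply a one-dimensional IFT to $t\mapsto X^2f(\gamma_X(t))$, extracting the transversality condition $\partial_t H(X_0,0)\neq 0$ from the linear-algebraic fact that $\ker dX_0f|_{T_{p_0}\s}$ and $\ker dX_0^2f|_{T_{p_0}\s}$ are distinct lines. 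Both routes encode the same nondegeneracy; yours is marginally more economical since it stays on the curve throughout, while the paper's version makes the role of the full rank-$3$ condition more transparent. For part (c) the two proofs are again essentially equivalent: the paper argues that $h'(t(X))\neq 0$ for each $X$ by contradiction with the linear independence at the cusp $p(X)$, whereas you establish it at $(X_0,0)$ and propagate by continuity; either suffices once combined with the uniqueness of the zero of $h$ on $[a_0,b_0]$.
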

\begin{proof}
	From the linearly independence of $\{df(p_0),dX_0f(p_0),dX_0^2f(p_0)\}$, it follows that $dX_0f(p_0)\neq 0$. Using the same notation and ideas in the the proof of Proposition \ref{foldloc}, we can find neighborhoods $\V$ of $X_0$ in $\Xr$, $V$ of $p_0$ in $\s$, real number $a_0<0<b_0$ and curves $\gamma_X:[a_0,b_0]\rightarrow V$, for each $X\in\V$, such that $$Xf(p)=0, \textrm{ with } X\in\V \textrm{ and } p\in V \Leftrightarrow p=\gamma_X(t), \textrm{ for some } t\in[a_0,b_0],$$
	and $\sgn(X^3f(p))=\sgn(X_0^{3}f(p_0))$, for each $X\in\V$ and $p\in V$.
	
	In addition, $\gamma_X$ intersects $\partial V$ transversally at $\gamma_X(a_0)$ and $\gamma_X(b_0)$, and $\gamma_X(t)\in\textrm{int}(V)$ for each $t\in(a_0,b_0)$. Therefore, item (a) is proved.
	
	To prove (b), consider the $\Cr$ function $H:\Xr\times \rn{3} \rightarrow \rn{3}$ given by $H(X,p)=(f(p),Xf(p),X^2f(p))$. Since $p_0$ is a cusp point of $X_0$, it follows that $H(X_0,p_0)=(0,0,0)$ and $\dfrac{\partial H}{\partial p}(X_0,p_0)$ is invertible. Now, we use the Implicit Function Theorem for Banach spaces to obtain a $\Cr$ function $\beta:\V\rightarrow V$ (reduce the initial neighborhoods $\V$ and $V$, if necessary) such that $H(X,p)=(0,0,0)$, with $X\in\V$ and $p\in V$, if and only if $p=\beta(X)$. 
	
	Reducing $\V$ to $\V\cap \beta^{-1}(\textrm{int}(V))$, we conclude that, each $X\in\V$ has a unique cusp point $p(X)=\beta(X)$ in $V$ which is contained in $\textrm{int}(V)$. Therefore, since $Xf(p(X))=0$, it follows that, there exists $t(X)\in(a_0,b_0)$ such that $\gamma_X(t(X))=p(X)$.
	
	To prove $(c)$, notice that, if $t\neq t(X)$, then $H(X,\gamma_X(t))\neq (0,0,0)$ and $f(\gamma_X(t))=Xf(\gamma_X(t))=0$. Therefore, $X^{2}f(\gamma_X(t))\neq 0$ and $\gamma_X(t)$ is a fold point.
	
	Let $X\in\V$, then $h(t)=X^{2}f(\gamma_X(t))$ is a real smooth function such that $h(t(X))=0$ and $h(t(X))\neq 0$ otherwise. Notice that $h'(t)=dX^{2}f(\gamma_X(t))\cdot \gamma_X'(t)$.
	
	If $h'(t(X))=0$, then $dX^{2}f(p(X))$ is orthogonal to $\gamma_X'(t(X))$, and since $Xf(\gamma_X(t))=f(\gamma_X(t))=0$ for every $t\in[a_0,b_0]$, it follows that $dXf(p(X))$ and $df(p(X))$ are orthogonal to $\gamma_X'(t(X))$. Since $\textrm{span}\{\gamma_X'(t(X))\}^{\bot}$ has dimension $2$, we have that $\{df(p(X)),\ dXf(p(X)),\ dX^{2}f(p(X))\}$ is linearly dependent, which is a contradiction because $p(X)$ is a cusp point of $X$. 
	
	Therefore, $h'(t(X))\neq 0$ and $h(t(X))=0$. If follows that $h(t)h(s)<0$ for each $-\e<t<t(X)<s<\e$, for some $\e>0$ sufficiently small. 
	%Since $h(t(X))=0$, $h(t)\neq 0$ for each $t\in[a_0,b_0]\setminus\{t(X)\}$, it follows that $h(t)h(s)<0$ for each $a_0\leq t <t(X)<s\leq b_0$.
\end{proof}

\subsection{Global Analysis} Now, the tangency set $S_{X}$ of $X\in\Xr$ is analyzed. We shall prove that a $\s$-block of a piecewise-smooth vector field $Z\in\Xi_0$ is robust under small perturbations in $\Or$. 

\begin{prop} \label{SXform}
	If $X\in\Xr$ is simple and $S_{X}\neq \emptyset$, then there exists $n\in\mathbb{N}$ such that $S_{X}=\sqcup_{i=1}^{n} S_{X}^{i}$, where each $S_{X}^{i}$ is diffeomorphic to the unit circle $\mathbb{S}^{1}$. Moreover, $S_{X}$ has at most a finite number of cusp points.
\end{prop}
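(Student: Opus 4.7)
The plan is to show that $S_X$ is a compact $\Cr$ one-dimensional submanifold of $\s$ without boundary, and then invoke the classical classification of compact $1$-manifolds. The argument has three stages: a topological check that $S_X$ is closed and compact, a local check using Propositions \ref{foldloc} and \ref{cusploc} that $S_X$ is a $1$-manifold, and a discreteness argument for cusp points.

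First I would observe that $S_X$ is the preimage of $\{0\}$ under the continuous map $Xf:\s\to\R$, hence closed in $\s$. Since $\s$ is compact by hypothesis, $S_X$ is compact.

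Next comes the main geometric step. Fix $p\in S_X$; since $X$ is simple, $p$ is either a fold point or a cusp point. If $p$ is a fold point, then Proposition \ref{foldloc} (applied with $X_0=X$ and $p_0=p$) produces a neighborhood $V$ of $p$ in $\s$ and a $\Cr$ curve $\gamma_X$ of fold points whose image, after a slight shrinking of $V$, coincides with $S_X\cap V$. If $p$ is a cusp point, Proposition \ref{cusploc} produces analogously a $\Cr$ curve $\gamma_X:[a_0,b_0]\to V$ of tangential singularities whose image exhausts $S_X\cap V$, and whose interior part $\gamma_X(a_0,b_0)$ contains $p$. In both cases $p$ has an open neighborhood in $S_X$ that is the image of a regular $\Cr$ embedding of an open interval, so $S_X$ is a $\Cr$ $1$-submanifold of $\s$ without boundary. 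The classification of compact $1$-manifolds without boundary then yields $S_X=\sqcup_{i=1}^{n}S_X^i$ with $n\in\N$ finite and each component $S_X^i$ diffeomorphic to $\mathbb{S}^1$. For the finiteness of cusp points, Proposition \ref{cusploc}(b) shows that each cusp $p$ of $X$ has a neighborhood in $\s$ containing a unique cusp of $X$ (namely $p$ itself), so the cusp set is discrete in the compact set $S_X$, hence finite.

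I expect the only subtle point to be verifying that the local curves produced by Propositions \ref{foldloc} and \ref{cusploc} truly capture the entire local intersection $S_X\cap V$ and not merely a sub-arc; this follows from the uniqueness assertion of the Implicit Function Theorem used in the proofs of those propositions, which characterizes the zero locus of $Xf$ in the chosen chart as the graph of the function supplied by the theorem.
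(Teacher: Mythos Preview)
Your proof is correct and follows the same overall plan as the paper: show $S_X$ is a compact one-dimensional $\Cr$ submanifold of $\s$, invoke the classification of compact $1$-manifolds, and finish with a compactness argument for the cusp count. The one noteworthy difference is in how the manifold structure is obtained. You assemble it from the local curves of Propositions~\ref{foldloc} and~\ref{cusploc}; the paper instead observes directly that $dXf(p)\neq 0$ at every $p\in S_X$ (at a fold point $X^2f(p)=X(p)\cdot\nabla Xf(p)\neq 0$ forces $\nabla Xf(p)\neq 0$, and at a cusp point the linear independence of $\{df(p),dXf(p),dX^2f(p)\}$ forces $dXf(p)\neq 0$), so that $0$ is a regular value of $Xf$ pulled back to $\s\simeq\mathbb{S}^2$ and the preimage theorem yields the embedded $1$-submanifold in one stroke. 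The paper's route is slightly more economical and sidesteps the concern you flag at the end about whether the local curves exhaust $S_X\cap V$; your route re-uses machinery already in hand and makes the local picture explicit. For the finiteness of components and of cusps, both arguments reduce to the same discreteness-plus-compactness step.
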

\begin{proof}	
	From continuity of $Xf$ in $\s$, it follows that $S_{X}= Xf^{-1}(0)$ is a compact subset of $\s$. In addition, from Propositions \ref{foldloc} and \ref{cusploc}, it follows that $S_X$ is locally connected. Therefore, the connected components of $S_{X}$ are open in the induced topology of $S_{X}$ and by compactness, we can conclude that $S_{X}$ has only a finite number of connected components.
	
	Let $\phi: \mathbb{S}^{2}\rightarrow \s$ be a diffeomorphism and consider the $\mathcal{C}^{r}$ function $F: \mathbb{S}^{2}\rightarrow \R$  given by $F(p)=Xf(\phi(p))$.
	
	Notice that, $F^{-1}(0)= \phi^{-1}(S_{X})$, and $dF(x)= dXf(\phi(x))\circ d\phi(x)$ for every $x\in \mathbb{S}^{2}$. If $p\in F^{-1}(0)$, then $\phi(p)\in S_{X}$, and since $S_{X}$ is composed by fold and cusp points, we have that $dXf(\phi(p))\neq 0$ and $d\phi(p)$ is an isomorphism, which implies that $dF(p)\neq 0$. Then $0$ is a regular value of $F$.
	
	Hence, $F^{-1}(0)$ is a 1-dimensional embedded submanifold  of $\mathbb{S}^{2}$. Since $S_{X}$ has a finite number of connected components, if follows that $F^{-1}(0)$ has a finite number of connected components.
	
	We know that every connected component is a closed set in $\s$, which means that each connected component of $F^{-1}(0)$ is a compact connected 1-dimensional embedded submanifold of $\mathbb{S}^{2}$, and thus, diffeomorphic to $\mathbb{S}^{1}$. 
	
	Finally, we can use Propositions \ref{foldloc} and \ref{cusploc} to construct an open covering of $S_{X}$ such that each element of this covering has only fold points with at most one cusp point. By compactness, we conclude that $S_{X}$ has just a finite number of cusp points.
\end{proof}

\begin{figure}[H]
	\centering
	\bigskip
	\begin{overpic}[width=4cm]{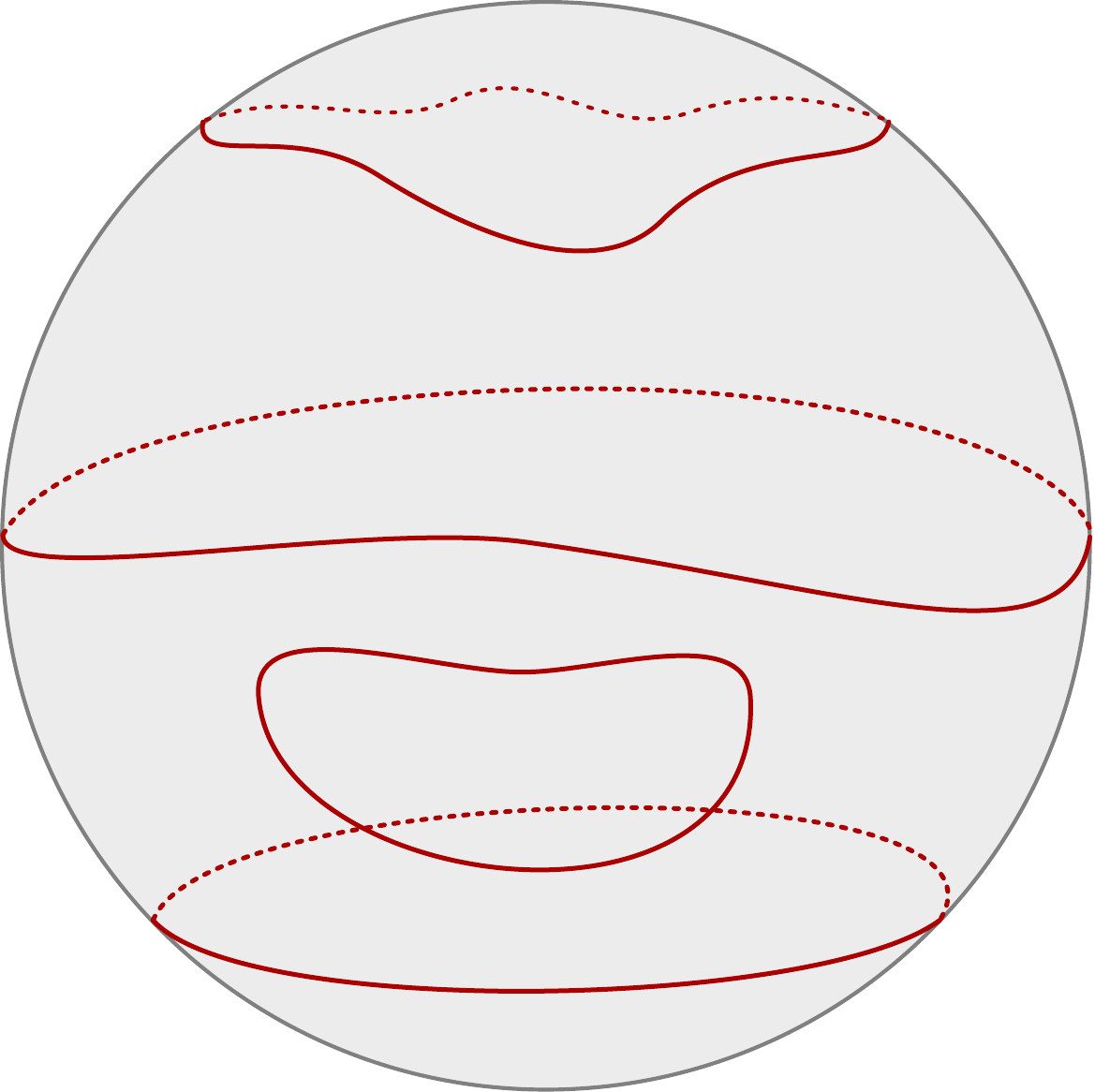}
		%				\begin{overpic}[grid,tics=5,width=7cm]{stablecusp.pdf}			
		\put(2,85){\footnotesize $\s$}
	\end{overpic}
	\bigskip
	\caption{Structure of the tangency set $S_{X}$ of a simple vector field $X\in \Xr$.  }
\end{figure} 

\begin{rem}\label{remcusp}
	Since $\s$ is a compact manifold, the tangency set $S_X$ of $X\in\Xr_S$ with $\s$ is diffeomorphic to a union of circles. From Proposition \ref{cusploc}, the cusp points occur as isolated points in a circle of fold points. 	In addition, if $p$ is a cusp of $X$, then, there exists a smooth curve of fold points of $X$ in $\s$ passing through $p$, which has their visibility changed at $p$. Therefore, the number $k$ of cusp points in a fold circle is always even and it has $k/2$ arcs of visible fold points and $k/2$ arcs of invisible fold points. 
\end{rem}

Now, we prove the persistence of the connected components of the tangency set $S_{X}$ of $X\in\Xr_S$.

\begin{prop} \label{SXcon}
	Let $X_0\in \Xr$ be a simple vector field such that $S_{X_0}\neq\emptyset$, and let $C_0$ be a connected component of $S_{X_0}$ containing $k_0$ cusp points. Then there exist neighborhoods $\V$ of $X_0$ and $V$ of $C_0$ in $\s$  such that,  for each $X\in \V$:
	\begin{itemize}
		\item[(a)] $S_{X}$ has a unique connected component in $V$ containing exactly $k_0$ cusp points.
		\item[(b)] The number of connected components of $S_X$ and $S_{X_0}$ coincide for any $X\in\V$.
	\end{itemize}
	
\end{prop}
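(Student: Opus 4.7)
The plan is to cover the circle $C_0$ by finitely many local neighborhoods furnished by Propositions \ref{foldloc} and \ref{cusploc}, glue the resulting local tangency curves into a single embedded closed curve for every nearby $X$, and then invoke upper semicontinuity of $S_X$ in $X$ to rule out extra components.

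By Proposition \ref{SXform}, $C_0$ is diffeomorphic to $\mathbb{S}^1$ and its cusp points $q_1,\dots,q_{k_0}$ are isolated in $C_0$; the remaining points are fold points of $X_0$. First I would apply Proposition \ref{cusploc} at each $q_j$ to obtain neighborhoods $\V_j^c$ of $X_0$ in $\Xr$ and pairwise disjoint neighborhoods $V_j^c$ of $q_j$ in $\s$, such that every $X\in\V_j^c$ has inside $V_j^c$ a unique $\Cr$ tangency curve meeting $\partial V_j^c$ transversally at two points and containing exactly one cusp. The complement $K=C_0\setminus\bigcup_j\mathrm{int}(V_j^c)$ is a compact union of fold arcs, which I cover by finitely many neighborhoods $V_1^f,\dots,V_m^f$ from Proposition \ref{foldloc}, with associated neighborhoods $\V_1^f,\dots,\V_m^f$ of $X_0$; a Lebesgue-number argument on $C_0\cong\mathbb{S}^1$ lets me arrange that consecutive members of the cover overlap in a connected arc and that the whole cyclic chain closes up around $C_0$.

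Next I set $\V_0=\bigcap_j\V_j^c\cap\bigcap_k\V_k^f$ and $V_0=\bigcup_j V_j^c\cup\bigcup_k V_k^f$. For $X\in\V_0$, the local pieces of $S_X$ inside each $V_j^c$ or $V_k^f$ are the unique curves produced by the implicit-function-theorem arguments in Propositions \ref{foldloc} and \ref{cusploc}; on any overlap the two descriptions must coincide because both are graphs solving the single equation $Xf=0$ and both pass through the same points coming from $C_{X_0}$ continuously deformed. Hence the local arcs fuse into a single embedded $\Cr$ curve $C_X\subset V_0$, diffeomorphic to $\mathbb{S}^1$, carrying exactly $k_0$ cusp points (one in each $V_j^c$) and elsewhere composed of fold points, which is the content of (a) after restricting to $V\subset V_0$ and $\V\subset\V_0$.

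For (b) I would repeat the construction on every connected component $C_0^1,\dots,C_0^n$ of $S_{X_0}$, producing pairwise disjoint neighborhoods $V^{(1)},\dots,V^{(n)}$ and a single neighborhood $\V$ of $X_0$ making part (a) hold for each one. To exclude new spurious components, I use upper semicontinuity of the zero set of $Xf$: since the map $(X,p)\mapsto Xf(p)$ is continuous and $\s$ is compact, for any open neighborhood $W=V^{(1)}\cup\cdots\cup V^{(n)}$ of $S_{X_0}$ there is $\V_1\subset\V$ with $S_X\subset W$ for every $X\in\V_1$; combined with (a) applied in each $V^{(i)}$, this gives exactly $n$ components for $S_X$. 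The main obstacle I anticipate is the gluing step: one must verify both that adjacent local curves match up on overlaps (so no branching or bifurcation of $S_X$ occurs) and that the chain of local pieces closes into a topological circle rather than a spiral or an arc; the former rests on the uniqueness furnished by the implicit function theorem, while the latter is ensured by choosing the cover of $C_0$ so that its nerve is itself a cycle.
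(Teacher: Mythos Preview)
Your proposal is correct and follows essentially the same route as the paper: cover the circle $C_0$ by finitely many local neighborhoods from Propositions \ref{foldloc} and \ref{cusploc}, glue the resulting implicit-function curves along overlaps via uniqueness to obtain a single embedded circle $C_X$ with exactly $k_0$ cusps, and then use compactness of $\s$ together with continuity of $(X,p)\mapsto Xf(p)$ to confine $S_X$ to the union of these tubular neighborhoods. The only cosmetic differences are that you treat cusp and fold neighborhoods separately and name the final step ``upper semicontinuity'' explicitly, whereas the paper handles all points uniformly and spells out the compactness argument directly.
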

\begin{proof}
	Given $p\in C_0$, from Propositions \ref{foldloc} and \ref{cusploc}, there exist neighborhoods $V_{p}\subset \s$ of $p$ and $\V_{p}\subset\Xr$ of $X_0$ such that, for each $X\in \V_{p}$, there exists a smooth curve $\gamma_{X}^{p}: [a_p,b_p]\rightarrow V_{p}$ satisfying the following properties:
	\begin{itemize}
		\item[$(i)$] $\gamma_{X}^{p}(t(X))$ is a point of same nature of $p$, for some $a_p<t(X)<b_p$;
		\item[$(ii)$] $\gamma_X^p(t)$ is a fold point of $X$ for each $t\neq t(X)$.
	\end{itemize}
	
	In addition, this curve contains all the tangency points of $X$ inside $V_{p}$, and intersects $V_p$ transversally at $\gamma_X^p(a_p)$ and $\gamma_X^p(b_p)$. Notice that $\gamma_{X_0}^{p}$ is a local parametrization of $C_0$ at $p$. 
	
	Clearly, $\mathcal{U}=\{V_{p}; p\in C_0\}$ covers $C_0$, and from compactness, we can extract a finite subcovering of $C_0$. Then, $C_0\subset V_{1}\cup\cdots\cup V_{k}$, with $V_i\in\U$, $i=1,\cdots,k$.
	
	From connectedness of $C_0$, and the fact that $C_0$ is a circle, we can order $V_{1},\cdots, V_{k}$ such that, for each $1\leq i \leq k-1$, there exists $p_{i}\in C_0$ such that $p_{i}\in \textrm{int}(V_{i}\cap V_{i+1})$ and $p_{k}\in C\cap V_{k}\cap V_{1}$.
	
	From the construction of the neighborhoods, $p_{i}$ is contained in both curves $\gamma_{X_0}^{i}$ and $\gamma_{X_0}^{i+1}$, and from continuity of $\gamma$ on $X$, we can reduce the neighborhoods $\V_{i}$ and $\V_{i+1}$ such that $\gamma_{X}^{i}$ and $\gamma_{X}^{i+1}$ have at least a  point (for each curve) in $V_{i}\cap V_{i+1}$, for each $X\in\V_i\cap\V_{i+1}$. From uniqueness, $\gamma_{X}^{i}$ and $\gamma_{X}^{i+1}$ must coincide in $V_{i}\cap V_{i+1}$.
	
	Let $V=V_{1}\cup\cdots\cup V_{k}$ and $\V=\V_{1}\cap\cdots\cap \V_{k}$, therefore, for each $X\in\V$ we may construct a $\Cr$ curve $\gamma_{X}:[a_{1},b_{n}]\rightarrow V$ which is injective in $(a_1,b_n)$ such that $\gamma_X(a_1)=\gamma_X(b_n)$ and $\gamma_{X}([a_{1},b_{n}])\cap V_{i}=\textrm{Im}(\gamma_X^{i})$, $i=1,\cdots,k$. 
	
	It follows that, for each $X\in\V$, $C=\textrm{Im}(\gamma_X)$ is a connected component of $S_{X}$ in $V$ and $p\in V$ is a tangency point of $X$ if and only if $p\in C$. The part (a) of the statement is proved.
	
	To prove (b), let $C_{1}^0, \cdots, C_{k}^0$ be all the connected components of $S_{X_0}$. From (a), there exist disjoint open neighborhoods $W_{i}$	of $C_{i}^0$ and $\W_{i}$ of $X_0$ such that, for every $X\in \W_{i}$, $S_{X}$ has a unique connected component contained in $W_{i}$.
	
	Let $W=W_1\cup\cdots\cup W_k$ and $\W= \W_1\cap\cdots\cap \W_k$. Notice that $X_0f(p)\neq 0$ for each $p\in \s\setminus W$. From continuity, for each $p\in \s\setminus W$, there exist neighborhoods $V_p$ of $p$ in $\s$ and $\V_{p}$ of $X_0$ in $\Xr$, such that $Xf(q)\neq 0$, for each $X\in \V_{p}$ and $q\in V_p$.
	
	From compactness of $\s\setminus W$, one can find a neighborhood $\V\subset \W$ of $X_0$ such that $Xf(p)\neq 0$, for each $X\in\V$ and $p\in\s\setminus W$. Therefore, $S_{X}\subset W$, for each $X\in\V$, and we are done.
	
\end{proof}

Notice that Proposition \ref{SXcon} concerns with smooth vector fields defined in manifolds with boundary. Now, we extend this analysis to elementary piecewise-smooth vector fields.

Let $Z\in\Xi_0$, if $C$ is a connected component of $S_{Z}$ composed only by fold-regular and cusp-regular points, then the normalized sliding vector field $F_{Z}^N$ of $Z$ is transversal to $C$, except at cusp points. Indeed, at each cusp-regular point $p$, $F_{Z}^N$ has a quadratic contact with $C$. 

At each quadratic contact of $F_{Z}^N$ with $C$, say it $p$, the orientation of the orbits of $F_Z^N$ reaching $C$ is changed in a neighborhood $V$ of $p$. More specifically, $F_{Z}^N$ reaches $C\cap V$ in either negative or positive time, depending on the side of $C\setminus\{p\}$. Also, if we compute $F_{Z}^N$ along $C$, it gives a complete turn between two cusp-regular points.

Hence, the classical index $I(F_{Z}^N,S_{X})$ of $F_{Z}^N$ along $C$ provides the number of complete turns that it gives along $C$, and from Remark \ref{remcusp}, we conclude that it coincides with half of the number of cusp-regular points of $C$.

Based on this discussion, we have the following result.

\begin{prop}
	Let $Z=(X,Y)\in \Xi_0$. On each connected component $C$ of $S_{Z}$, composed by fold-regular and cusp-regular points, the number of cusp-regular points of $Z$ in $C$ is given by:
	\begin{equation}
	N_{cusp}=2 I(F_{Z}^N, C),
	\end{equation}
	where I is the index of $F_{Z}^N$ along $C$.
\end{prop}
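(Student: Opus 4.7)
My plan is to compute the winding number of $F_Z^N$ along $C$ explicitly via a Fermi trivialization, and to extract it from the sign pattern of the normal component of $F_Z^N$.

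By Proposition \ref{SXform} and Remark \ref{remcusp}, $C$ is a smoothly embedded circle containing finitely many cusp-regular points, with fold-regular points elsewhere. I fix an orientation and a smooth parametrization $c:\R/L\Zi \to C$, and choose a Fermi frame $\{T(t),N(t)\}$ along $C$: $T=c'/|c'|$ is the unit tangent, and $N$ is the unit vector of $T\s$ normal to $C$ making $(T,N)$ a positively oriented frame of $T\s$. Decompose $F_Z^N(c(t))=\alpha(t)T(t)+\beta(t)N(t)$.

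The first step is to observe that $F_Z^N$ is nowhere zero on $C$: from $F_Z^N=Yf\cdot X-Xf\cdot Y$ and $Xf\equiv 0$ on $C$, we get $F_Z^N|_C=(Yf|_C)\,X|_C$; both factors are nonzero ($Y$ is regular at each point of $C$ so $Yf\ne 0$, and $X\ne 0$), and $Yf$ has constant sign by continuity and connectedness of $C$. Next, I claim that $\beta(t)=0$ precisely at the cusp-regular points of $C$: since $F_Z^N$ is a scalar multiple of $X$ along $C$, vanishing of $\beta$ means $X$ is tangent to $C$, which via $X\cdot dXf=X^2f$ and the linear independence condition in the definition of cusp-regular is equivalent to $c(t)$ being a cusp of $Z$. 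Lemma \ref{sliding} then gives quadratic contact of $F_Z^N$ with $C$ at each cusp, so $\beta$ has a simple zero there; hence the $k:=N_{cusp}$ cusps divide $C$ into $k$ arcs on which $\beta$ has alternating sign.

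With this setup, lift the angular function $\theta(t)=\arg(\alpha(t)+i\beta(t))$ to a continuous $\tilde\theta:[0,L]\to\R$. At each cusp $\tilde\theta$ lies in $\pi\Zi$ (since $\beta=0$ and $\alpha\ne 0$), while between two consecutive cusps $\tilde\theta$ lies strictly inside a single open interval $(n\pi,(n+1)\pi)$ because $\beta$ has constant nonzero sign on the arc. Therefore the net change of $\tilde\theta$ across each arc between consecutive cusps equals exactly $+\pi$ or $-\pi$, and the winding number is $I(F_Z^N,C)=(\tilde\theta(L)-\tilde\theta(0))/(2\pi)$, a sum of $k$ terms each equal to $\pm\tfrac{1}{2}$.

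The final and most delicate step is to verify that these $\pm$ signs all coincide. For this I invoke Vishik's normal form (Theorem \ref{Vishik}) at each cusp: in the coordinates provided there, a direct computation of $F_Z^N$ on $S_X$ (as done in the discussion preceding the proposition, where $F_Z^N$ takes the form $(x_3,1)$ in the cusp chart) shows that, after aligning the local parametrization with the global orientation of $C$, the normal component $\beta$ crosses zero in the same direction at every cusp, so $\tilde\theta$ jumps by $+\pi$ at each of the $k$ cusps. Thus $\tilde\theta(L)-\tilde\theta(0)=k\pi$, hence $I(F_Z^N,C)=k/2=N_{cusp}/2$, yielding the desired formula. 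The main obstacle is precisely this consistency step, which requires carefully matching the orientations of the local Vishik charts at each cusp with the global orientation fixed on $C$; without it one only obtains the weaker bound $|I(F_Z^N,C)|\le N_{cusp}/2$ with the correct parity.
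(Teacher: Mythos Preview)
Your argument has a genuine gap at exactly the point you flag as delicate. The assertion that the change of $\tilde\theta$ across each arc is $\pm\pi$ already requires that $\alpha$ change sign between consecutive cusps; otherwise $\tilde\theta$ enters and leaves the interval $(n\pi,(n+1)\pi)$ through the same endpoint and the net change is $0$. A direct computation at a cusp $p$ (using that $X(p)$ is tangent to $C$ there, so $T(p)=\epsilon_p X(p)/|X(p)|$) gives $\sgn(\alpha(p))\cdot\sgn(\beta'(p))=\sigma\,\sgn(X^3f(p))$ for a fixed orientation constant $\sigma$, so both your $\pm\pi$ claim and your consistency claim reduce to the single assertion that $\sgn(X^3f)$ is the same at every cusp of $C$. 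Your Vishik argument cannot establish this: the normal form \emph{normalizes} $X^3f$ to a fixed sign, so the local chart carries no information about how these signs compare across different cusps once you align with the global orientation of $C$. And the assertion fails in general: take $f=z$, $Y=(0,0,1)$, $X=(1,\,2x,\,x^2+y^2-1)$; then $C=\{x^2+y^2=1,\ z=0\}$ carries four cusp-regular points, at $(0,\pm1)$ and $(\pm\sqrt{3}/2,-1/2)$, with $X^3f=2+4y+8x^2$ taking the values $6,6,-2,6$, and the Fermi-frame winding of $F_Z^N$ along $C$ comes out to $\pm1$, not $\pm2$.

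The paper's own discussion preceding the proposition is a brief heuristic (``$F_Z^N$ gives a complete turn between two cusp-regular points'') and does not confront this sign issue either. Your write-up is more careful in isolating the obstruction, but the resolution you propose does not close it, and the example above shows that without an extra hypothesis (such as all cusps on $C$ having $X^3f$ of one sign) or a different reading of ``index'', the equality $N_{cusp}=2I$ need not hold; only the parity and the bound $|I|\le N_{cusp}/2$ that you yourself note at the end survive.
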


Recalling that the fold-fold points of $Z\in\Xi_0$ are isolated in $\s$, we obtain the next result directly from Proposition \ref{SXform} and compactness of $\s$.

%It follows directly from Proposition \ref{SXform}:

\begin{prop}\label{SZ}
	Let $Z=(X,Y)\in\Xi_0$ such that $S_{Z}\neq \emptyset$. Then:
	\begin{itemize}
		\item[(a)] there exists $n\in\mathbb{N}$ such that $S_{Z}=\cup_{i=1}^{n} S_{Z}^{i}$, where each $S_{Z}^{i}$ is diffeomorphic to the unit circle $\mathbb{S}^{1}$ which is contained either in $S_{X}$ or in $S_{Y}$. In addition, $S_{Z}$ has at most a finite number of cusp-regular and fold-fold points.
		\item[(b)] $p\in S_Z$ is a fold-fold point of $Z$ if and only if $p$ is contained in the intersection of two circles $S_{Z}^{i}$ and $S_{Z}^{j}$, for some $1\leq i,j\leq n$. In this case, $S_Z^i\subset S_X$ and $S_Z^j\subset S_Y$. 
	\end{itemize}	 
\end{prop}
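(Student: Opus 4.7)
The plan is to reduce Proposition \ref{SZ} to Proposition \ref{SXform} applied separately to the components $X$ and $Y$, and then track how the resulting families of circles can meet. First I would observe that since $Z=(X,Y)\in\Xi_0$, at every $p\in S_X$ the point $p$ is an elementary tangential singularity of $Z$ of type (FR), (CR), or (FF), so in each case $p$ is either a fold or a cusp of $X$; hence $X\in\Xr_S$, and by the same argument $Y\in\Xr_S$. Applying Proposition \ref{SXform} to each, I get integers $n_X,n_Y\in\N$ and decompositions $S_X=\sqcup_{i=1}^{n_X}S_X^i$ and $S_Y=\sqcup_{j=1}^{n_Y}S_Y^j$ into circles, with only finitely many cusp points of $X$ and of $Y$ in total.

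Relabelling this combined list as $S_Z^1,\dots,S_Z^n$ with $n=n_X+n_Y$ gives the decomposition of $S_Z=S_X\cup S_Y$ required in (a); each $S_Z^i$ is diffeomorphic to $\mathbb{S}^1$ and lies in $S_X$ or $S_Y$ by construction. The cusp-regular points of $Z$ form a subset of the cusp points of $X$ (in $S_X\setminus S_Y$) together with the cusp points of $Y$ (in $S_Y\setminus S_X$), since at a fold-fold point both $X$ and $Y$ have folds, not cusps; hence finiteness of cusp-regular points follows. For the fold-fold points, note that $p$ is fold-fold iff $p\in S_X\cap S_Y$ with $X^2f(p),Y^2f(p)\neq 0$ and $S_X\pitchfork S_Y$ at $p$; since $Z\in\Xi_0$, every point of $S_X\cap S_Y$ is of this type, so $S_X\cap S_Y$ consists exactly of the fold-fold points. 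These intersections are transverse intersections of compact one-dimensional submanifolds embedded in the compact surface $\s$, hence a discrete (and therefore finite) set.

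For part (b), if $p$ is a fold-fold point of $Z$ then $p\in S_X\cap S_Y$, so $p$ lies in some circle $S_Z^i\subset S_X$ and some circle $S_Z^j\subset S_Y$, which are distinct because the decompositions within $S_X$ and within $S_Y$ are disjoint, and moreover no circle of $S_X$ can coincide with a circle of $S_Y$ (otherwise every point of that common circle would be a fold-fold point, contradicting the transversality $S_X\pitchfork S_Y$). Conversely, if $p\in S_Z^i\cap S_Z^j$ with $i\neq j$, the same disjointness forces one circle to lie in $S_X$ and the other in $S_Y$, so $Xf(p)=Yf(p)=0$; invoking $Z\in\Xi_0$ once more, $p$ must be an elementary tangential singularity having $X$ and $Y$ both tangent to $\s$, which is precisely the fold-fold case, completing the proof.

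The only genuinely delicate point is verifying that fold-fold points form a finite set and that the circles from $S_X$ and $S_Y$ do not coincide as subsets of $\s$; both are resolved by invoking the transversality $S_X\pitchfork S_Y$ built into the definition of $\Xi_0$ together with compactness of $\s$. All other steps are bookkeeping on top of Proposition \ref{SXform}.
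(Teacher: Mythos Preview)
Your proposal is correct and follows exactly the route the paper indicates: the paper does not give a detailed proof but simply states that the result follows directly from Proposition~\ref{SXform} applied to $X$ and $Y$ together with the compactness of $\s$ (and the fact that fold-fold points of $Z\in\Xi_0$ are isolated). You have faithfully unpacked this sketch, including the verification that $X,Y\in\Xr_S$, the bookkeeping for the circle decomposition, and the transversality argument showing that $S_X\cap S_Y$ is finite and that no circle of $S_X$ can coincide with one of $S_Y$.
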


\begin{figure}[H]
	\centering
	\bigskip
	\begin{overpic}[width=7cm]{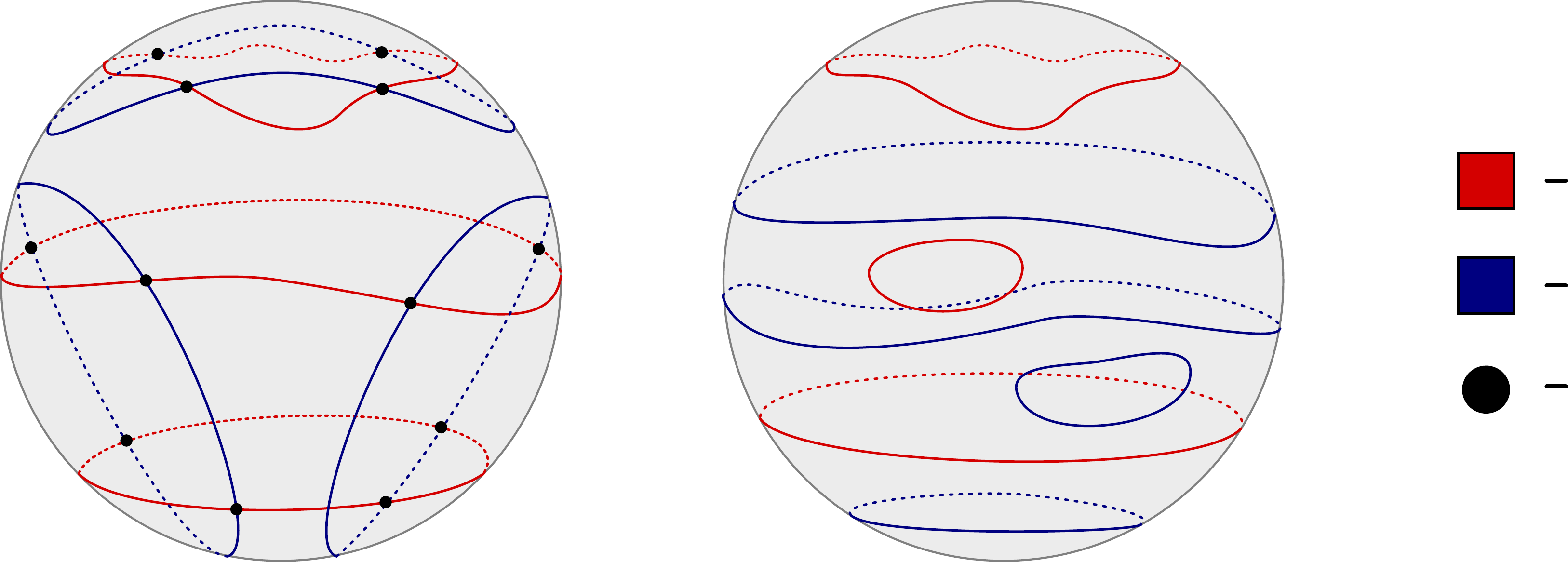}
		\put(102,23.2){{\footnotesize Tangency set $S_{X}$ of $X$}}	
		\put(102,16.5){{\footnotesize Tangency set $S_{Y}$ of $Y$}}
		\put(102,10){{\footnotesize Fold-fold point of $Z$}}
		%	\put(101,4.2){{\footnotesize Switching manifold $\s\cong \mathbb{S}^{2}$}}
		\put(-4,33){{\small $(i)$}}				
		\put(43,33){{\small $(ii)$}}																																																													
	\end{overpic}
	\vspace{0.2cm}
	\bigskip
	\caption{ Structure of the tangency set $S_{Z}$ of an elementary piecewise-smooth vector field $Z=(X,Y)$ with $(i)$ and without $(ii)$ fold-fold points}
\end{figure} 

Combining Proposition \ref{SXcon} with the transversality of $S_X$ and $S_Y$ at fold-fold points of $Z=(X,Y)$, we obtain the following result.

\begin{prop} \label{blocks}
	Let $Z_0=(X_0,Y_0)\in \Xi_0$ be an elementary piecewise-smooth vector field such that $S_{Z_0}\neq\emptyset$, and let $C_0$ be a connected component of $S_{X_0}$ containing $n_0$ cusp-regular points and $m_0$ fold-fold points, then there exist neighborhoods $\V$ of $Z_0$ and $V$ of $C_0$ in $\s$  such that:
	\begin{itemize}
		\item[(a)] for each $Z=(X,Y)\in \V$, $S_{Z}$ has a unique connected component $C$ in $V$ containing exactly
		$n_0$ cusp-regular points and $m_0$ fold-fold points.
		\item[(b)] For each circle $S_0$ of $S_{X_0}$ (resp. $S_{Y_0}$) contained in $C_0$, there exists a neighborhood $V_0\subset V$ of $S_0$ such that each $Z=(X,Y)\in\V$ has a unique circle $S$ of $S_X$ (resp. $S_{Y}$) contained in $V_0$, with the same number of cusp-regular and fold-fold points of $S_0$.  
		\item[(c)] If two circles $S_0^1$ and $S_0^2$ of $S_{X_0}$ contained in $C_0$ intersect themselves at fold-fold points $p_1, p_2,\cdots, p_{2k}$, then, for each $Z\in\V$, the correspondent circles of item $(b)$ intersect themselves at fold-fold points $p_1(Z),\cdots, p_{2k}(Z)$. In addition, $p_i(Z)$ is sufficiently close to $p_i$ and they have the same visibility.
		\item[(d)] for each $Z=(X,Y)\in \V$, $S_{Z}$ has the same number of connected components of $S_{Z_0}$, for each $Z\in\ V$.
	\end{itemize}	
\end{prop}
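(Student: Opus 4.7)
The plan is to reduce Proposition \ref{blocks} to Proposition \ref{SXcon} applied individually to each circle composing $C_0$, and then patch the local pictures together using the transversality at fold-fold points built into $\Xi_0$. By Proposition \ref{SZ}(a), decompose $C_0 = S_0^{1} \cup \cdots \cup S_0^{\ell}$ into finitely many circles, each lying entirely in $S_{X_0}$ or in $S_{Y_0}$ and meeting the others only at fold-fold points. For each such $S_0^{i}$, Proposition \ref{SXcon} furnishes an open neighborhood $W_0^{i} \subset \s$ and a neighborhood of $X_0$ or $Y_0$ in $\Xr$ on which the perturbed tangency set contains a unique circle $S^{i}(Z) \subset W_0^{i}$ with exactly the same number of cusp points as $S_0^{i}$.

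Next I would handle the fold-fold points $p_{1}, \ldots, p_{2k}$ sitting at the intersections of these circles. Since $Z_{0} \in \Xi_{0}$, at each $p = p_{s}$ the curves $S_{X_{0}}$ and $S_{Y_{0}}$ cross transversally in $\s$, so the $q$-derivative of $(Z,q) \mapsto (Xf(q), Yf(q))$ at $(Z_{0},p)$ is surjective onto $\mathbb{R}^{2}$. The Banach-space Implicit Function Theorem (the tool already used in Propositions \ref{foldloc} and \ref{cusploc}) then produces a unique nearby simultaneous zero $p_{s}(Z)$. By uniqueness this point must lie on both perturbed circles $S^{i}(Z)$ and $S^{j}(Z)$, and continuity of $X^{2}f$, $Y^{2}f$ preserves the signs that classify visibility, yielding (c). Setting $V \subset \bigcup_{i} W_{0}^{i}$ to be a sufficiently small neighborhood of $C_{0}$ and $\V$ to be the intersection of the neighborhoods produced so far, the circles $S^{i}(Z)$ glue along the persistent points $p_{s}(Z)$ into a single connected subset of $S_Z \cap V$ containing exactly $n_{0}$ cusp-regular points and $m_{0}$ fold-fold points; this proves (a) and (b).

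For (d), apply the previous analysis to every connected component of $S_{Z_{0}}$, obtaining pairwise disjoint neighborhoods $V^{(s)}$ and a common $\V$. On the compact set $\s \setminus \bigcup_{s} V^{(s)}$ both $X_{0}f$ and $Y_{0}f$ are nowhere zero, so by continuity and compactness (after shrinking $\V$) the same holds for $Xf$ and $Yf$ whenever $Z \in \V$; hence $S_{Z} \subset \bigcup_{s} V^{(s)}$, and each $V^{(s)}$ contains exactly one connected component of $S_{Z}$. The main obstacle is the bookkeeping at fold-fold points: one must verify that the two perturbed circles through $p_{s}(Z)$ are precisely those provided by Proposition \ref{SXcon} and that no spurious intersections appear elsewhere in $V$. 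The transversality built into $\Xi_{0}$, coupled with the uniqueness in the Implicit Function Theorem applied to the system $\{Xf = 0, Yf = 0\}$, is precisely what closes this gap.
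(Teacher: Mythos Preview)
Your proposal is correct and follows essentially the same route as the paper: the paper does not give a detailed argument but simply states that the result is obtained by ``combining Proposition \ref{SXcon} with the transversality of $S_X$ and $S_Y$ at fold-fold points of $Z=(X,Y)$,'' which is precisely your strategy of applying Proposition \ref{SXcon} circle-by-circle and then using the Implicit Function Theorem on $(Xf,Yf)$ at the transverse intersections to track the fold-fold points. Your write-up is in fact more explicit than the paper's one-line justification, and your compactness argument for (d) mirrors the one already used in the proof of Proposition \ref{SXcon}(b).
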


With this, we see that for a small neighborhood $\V$ of $Z_0\in\Xi_0$, each $Z\in\V$ has a tangency set $S_Z$ with exactly the same characteristics of $S_{Z_0}$, i.e., each circle of $S_Z$ is near from a circle of $S_{Z_0}$ and they present the same configuration of intersections. It allows us to conclude that, if $Z_0$ has a $\s$-block $U_0$, then there exists a neighborhood $V_0$ of $U_0$ in $\s$, such that each $Z\in\V$ has a unique $\s$-block $U$ contained in $V_0$ and it has the same structure of $U_0$.

We complete the characterization of the tangency sets by exhibiting another property concerning the number of fold-fold points of $Z\in\Xi_0$. 

\begin{prop}
	If $Z\in\Xi_0$, then the number of fold-fold points of $Z$ is even.
\end{prop}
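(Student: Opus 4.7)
The plan is to reduce the statement to a parity count of transversal intersections of closed curves on a sphere. First I would invoke Proposition \ref{SZ} to write $S_X=\bigsqcup_{i=1}^{n_X} S_X^i$ and $S_Y=\bigsqcup_{j=1}^{n_Y} S_Y^j$, where each $S_X^i,\,S_Y^j$ is a $\Cr$-embedded circle in $\s$. By the definition of an elementary fold-fold singularity (condition (FF)), a point $p\in\s$ is a fold-fold of $Z$ if and only if $p\in S_X\cap S_Y$ and $S_X\pitchfork S_Y$ at $p$. Since the circles inside $S_X$ (resp.\ inside $S_Y$) are pairwise disjoint by Proposition \ref{SXform}, the set of fold-fold points decomposes as the disjoint union
\begin{equation*}
\mathrm{FF}(Z)=\bigsqcup_{i,j}\bigl(S_X^i\cap S_Y^j\bigr),
\end{equation*}
with every intersection transverse.

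Next I would exploit the hypotheses on $\s$: it is a compact, connected, simply connected $2$-submanifold of $\rn{3}$ without boundary, so $\s$ is diffeomorphic to $\mathbb{S}^2$. Fix indices $i,j$ and apply the Jordan curve theorem to the embedded circle $S_X^i$: it separates $\s$ into two open disks $D^+$ and $D^-$. Now follow a smooth parametrization $\gamma:[0,1]\to\s$ of $S_Y^j$ with $\gamma(0)=\gamma(1)$ and avoiding (by a small reparametrization if needed) the finitely many points of $S_X^i\cap S_Y^j$ at the endpoints. Because every intersection is transverse, at each crossing of $S_X^i$ the curve $\gamma$ switches between $D^+$ and $D^-$. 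Since $\gamma$ starts and ends in the same component, the total number of crossings must be even; equivalently, the mod-$2$ intersection number $[S_X^i]\cdot[S_Y^j]\in H_1(\s;\mathbb{Z}/2)$ vanishes because $H_1(\mathbb{S}^2;\mathbb{Z}/2)=0$.

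Finally I would sum:
\begin{equation*}
\#\,\mathrm{FF}(Z)=\sum_{i=1}^{n_X}\sum_{j=1}^{n_Y}\#\bigl(S_X^i\cap S_Y^j\bigr),
\end{equation*}
and since each summand is even, the total is even, which concludes the proof. The argument is entirely topological once the decomposition of $S_Z$ into embedded circles is in place; the only real point to be careful about is the transversality of every intersection (guaranteed by $Z\in\Xi_0$ and the (FF) condition), so that the "switch of sides" interpretation of crossings is valid and no tangential contacts spoil the parity count. I expect no serious obstacle, as the Jordan/parity argument is robust and the circle-structure of $S_X,\,S_Y$ provided by Propositions \ref{SXform} and \ref{SZ} is exactly what is needed.
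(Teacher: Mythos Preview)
Your proof is correct and uses essentially the same idea as the paper: the Jordan curve theorem on $\s\simeq\mathbb{S}^2$ forces a closed curve to cross another transversally an even number of times. The only difference is organizational---the paper pairs off fold-fold points one at a time via an inductive argument, whereas you decompose $S_X\cap S_Y$ into the pairwise intersections $S_X^i\cap S_Y^j$ and show each has even cardinality; your version is a bit more systematic (and the homological remark is a nice touch), but the core argument is the same.
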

\begin{proof}
	In fact, if $p$ is a fold-fold points of $Z$, then $p$ is contained in the transversal intersection of two circles of $S_{Z}$, say it $C_{1}$ and $C_{2}$.
	
	From Jordan Curve Theorem, $C_{1}$ divides $\s-C_{1}$ into two connected components, and since $C_{2}$ is a closed curve, it follows that $\gamma$ has to intersect $C_{1}$ again in another point different from $p$, say it $q$. Since $Z\in\Xi_0$, $q$ is a fold-fold point.
	
	To complete the proof, it is sufficient to notice that if $\tilde{p}$ is another fold-fold point different from $p$ and $q$, then by the same argument as above we can find another fold-fold point $\tilde{q}$ different from the others, and the result follows by induction and the fact that $Z$ has a finite number of fold-fold points (due to compactness of $\s$).
\end{proof}

\begin{rem}
	Due to the similarity on the behavior between a fold-fold point and the vertex defined in \cite{PP}, it will be referred as a \textbf{vertex} of $S_{Z}$. In addition, if $S_{Z}$ has no vertices, then each $\s$-block of $Z\in\Xi_0$ has a smooth boundary.
\end{rem}

Finally, we notice that, if $Z_0\in\Xi_0$, the transversality of $S_{X_0}$ and $S_{Y_0}$ at vertexes can not be dropped. Indeed, if $S_{X_0}$ is tangent to $S_{Y_0}$ at $p$ then  $T_{p}S_{X_0}=T_{p}S_{Y_0}$ and the intersection between $S_X$ and $S_Y$ can be easily broken by translations. In this case, the structure of the tangency set is quite different for small perturbations of $Z_0$.

% we may consider a vector $v\in T_{p}\s$ which is orthogonal to $T_{p}S_{X_0}=T_{p}S_{Y_0}$ and broke the intersection between $S_X$ and $S_Y$ for a one-parameter family obtained by translations of $X_0$ and $Y_0$ in opposite directions of $v$. In this case, the structure of the tangency set is quite different for small perturbations of $Z_0$.

\section{Sliding Structural Stability}

In this section we discuss the concept of sliding structural stability and we prove Theorem A. 

Let $Z_0\in\Xi_0$ having a $\s$-block $U_0$, as we can see in \cite{GT,GTS}, the first element to construct a semi-local equivalence at $U_0$ between $Z_0$ and some $Z\in\Or$ is the existence of an sliding equivalence between $Z_0$ and $Z$. 
Based on this, we propose a sliding version of Peixoto's Theorem for typical piecewise-smooth vector fields. It is worth to mention that in \cite{PP}, the authors have mentioned that the case where the boundary is piecewise smooth can be considered with the addition of some conditions on the boundary.

The most relevant difference between the classical case and the one to be considered in Theorem A is that the sliding vector field $F_{Z_0}^{N}$ is allowed to have a singularity at vertices (fold-fold points of $Z_0$) of the boundary, since it is persistent at vertices in the discontinuous world. Actually, the boundary changes with the sliding vector field in such a way that the singularity always stays in the vertex.

Recall that $F_{Z_0}$ is defined on $\textrm{int}(U_0)$, but it is not defined on $\partial U_0$. Since $U_0$ is connected, we can use Lemma \ref{sliding} to extend $F_{Z_0}$ to $\partial U_0$ through the normalized sliding vector field $F_{Z_0}^{N}$, and then we can determine the stability of $F_{Z_0}$ on $U_0$ by using the stability of $F_{Z_0}^N$ on $\overline{U_0}$.

Notice that a vertex is a singularity of $F_{Z_0}^N$, but it is not a critical point of $F_{Z_0}$ (the vector field is not even defined on these points), then if a trajectory of $F_{Z_0}$ reaches a vertex, it does in a finite time, differently from a trajectory of $F_{Z_0}^N$.

\subsection{Proof of Theorem A}
Let $Z_0=(X_0,Y_0)\in\s_0(SLR)$.

If $\s=\s^{s}(Z_0)$, then $F_{Z_0}$ is defined in the whole $\s$ and for a small neighborhood $\V$ of $Z_0$ in $\Or$, the sliding vector field $F_{Z}$ of $Z\in\V$ is also defined in $\s=\s^{s}(Z)$. Therefore, the result follows from Lemma \ref{sliding} and the classical version of Peixoto's Theorem.

Assume for instance that $\s^{s}(Z_0)\neq \s$. Since $Z_0$ has a typical tangency set, it follows that $\overline{\s^{s}(Z_0)}$ is a compact set which is properly contained in $\s\setminus\{p_0\}$, for some $p_0\in\s$. Therefore, we can perform a change of coordinates in $\s\setminus\{p_0\}$ (stereographic projection) which brings $\overline{\s^{s}(Z_0)}$ into a compact subset of $\rn{2}$. Denote this identification by $\overline{\s^{s}(Z_0)}\simeq_{\rho} M_{0}$.

\begin{figure}[H]
	\centering
	\bigskip
	\begin{overpic}[width=9cm]{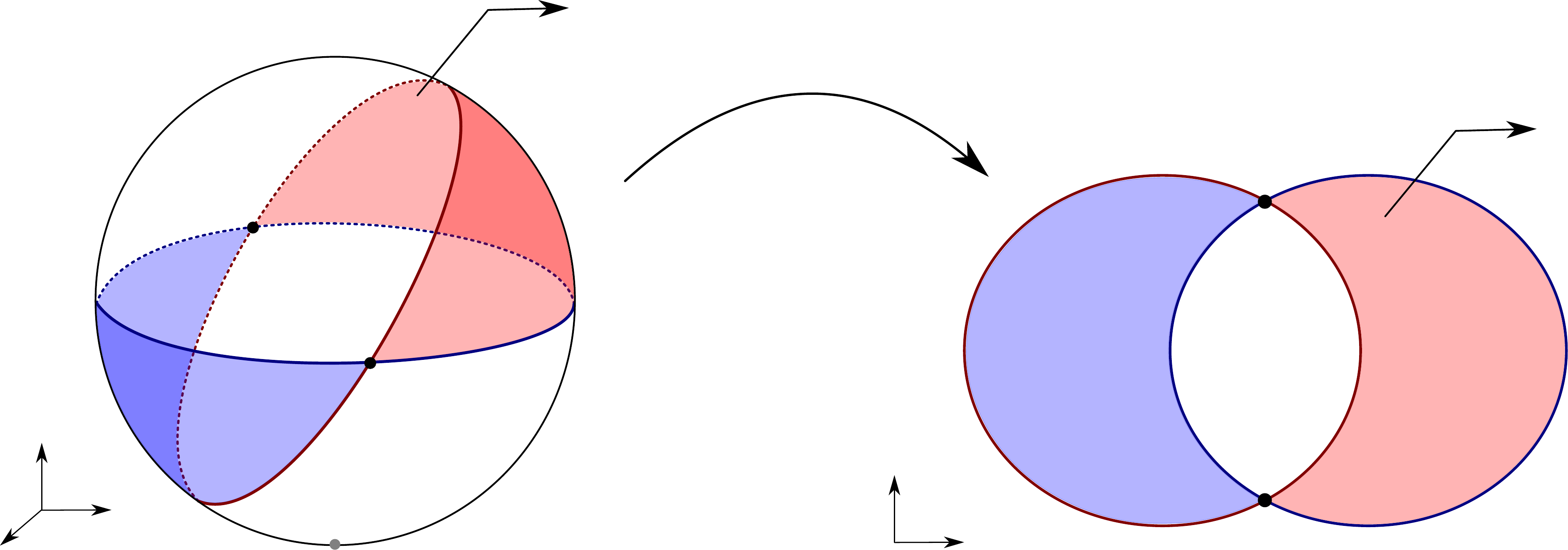}
		%					\begin{overpic}[grid,tics=5,width=11cm]{Figures/discregions.pdf}									
		\put(5,25){\footnotesize $\s$}
		\put(38,33){\footnotesize $\overline{\s^{s}(Z_0)}$}
		\put(99,26){\footnotesize $M_0$}			
		\put(50,30){\footnotesize $\rho$}				
		\put(20,-2){\footnotesize $p_0$}							
	\end{overpic}
	\bigskip
	\caption{ Interpretation of the identification $\overline{\s^{s}(Z_0)}\simeq_{\rho} M_{0}$. }	
\end{figure}  	

Since $Z_0\in\Xi_0$, the boundary $\partial M_0$ is composed by fold-regular points, a finite (even) number $n_c$ of cusp-regular points, $p_i(Z_0)$, $i=1,\cdots,n_c$, and a finite (even) number $n_f$ of fold-fold points, $q_j(Z_0)$, $j=1,\cdots,n_f$.

Notice that each $\s$-block $U_0\subset M_0$ of $Z_0$ is a path-connected region, which has its boundary $\partial U_0$ composed by a union of circles which are pairwise transversal, and they may intersect themselves only at a finite number of points. In addition, each circle is composed by fold-regular, cusp-regular and fold-fold points of $Z_0$, and two circles intersect themselves at $p$ if, and only if, $p$ is a fold-fold point of $Z_0$.

\begin{figure}[H]
	\centering
	\bigskip
	\begin{overpic}[width=5cm]{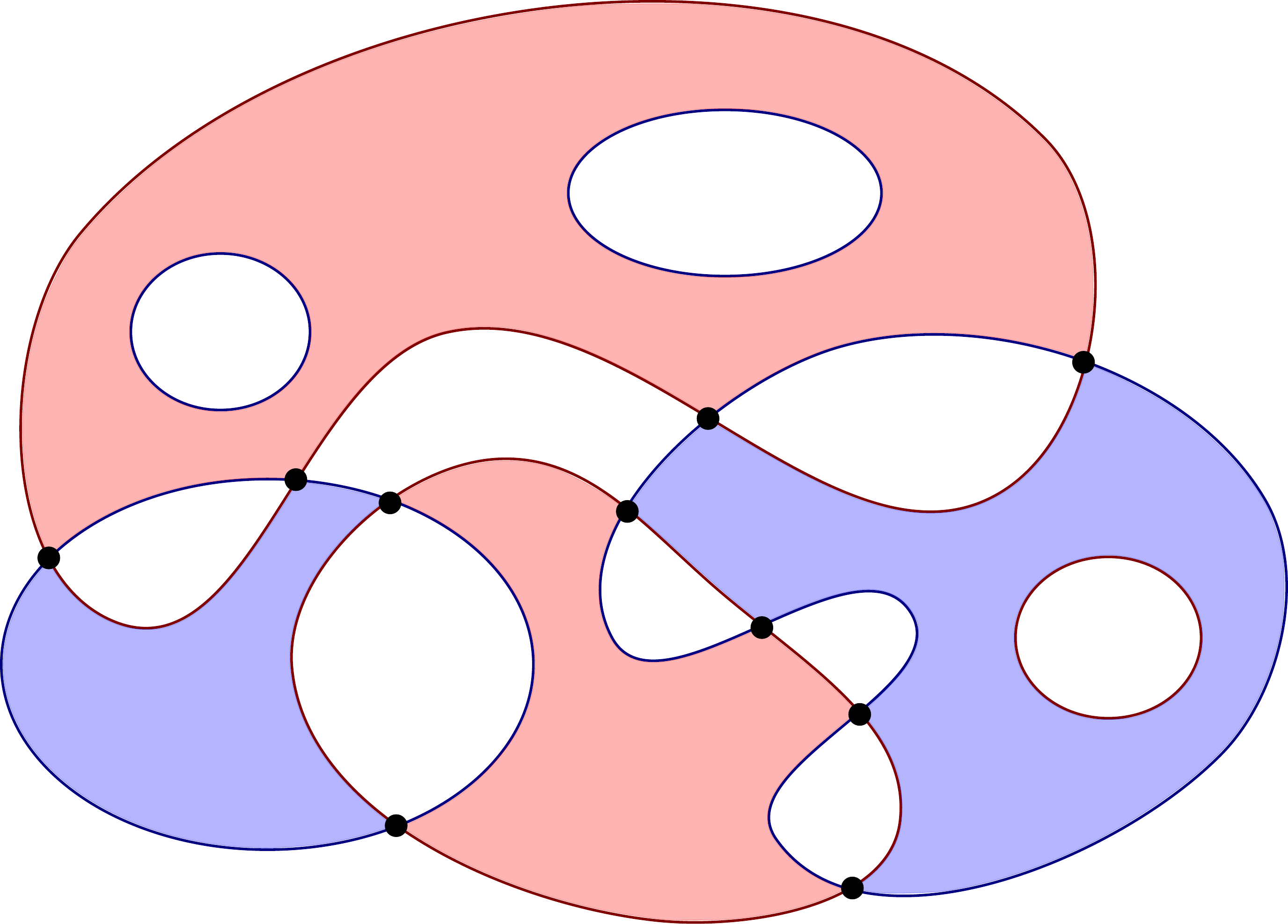}
		%					\begin{overpic}[grid,tics=5,width=11cm]{Figures/discregions.pdf}									
		
	\end{overpic}
	\bigskip
	\caption{ Example of a $\s$-block $M_0$ of $Z_0=(X_0,Y_0)$. The stable and unstable sliding regions are represented by the colors blue and red, respectively. }	
\end{figure}  	

Now each $\s$-block $U_{0}$ of $Z_0$ is path-connected, but $\widetilde{U_{0}}=U_{0}\setminus\{q_1(Z_0),\ \cdots,\ q_{n_f}(Z_0)\}$ is not connected. Call the closure of the connected components of $\widetilde{U_{0}}$ of all $\s$-blocks $U_0$ of $Z_0$ by $R_i(Z_0)$, $i=1,\cdots,l$, where $l$ is the number of connected components of $\widetilde{U_{0}}$.	

From the characterization of $U_0$, it follows that, each $R_{i}(Z_0)$ is a simple polygon with $a_i\leq n_f$ vertices and a finite number of holes in its interior. Notice that, if $a_i=0$, then $\partial R_{i}(Z_0)$ is smooth. 	

\begin{figure}[H]
	\centering
	\bigskip
	\begin{overpic}[width=11cm]{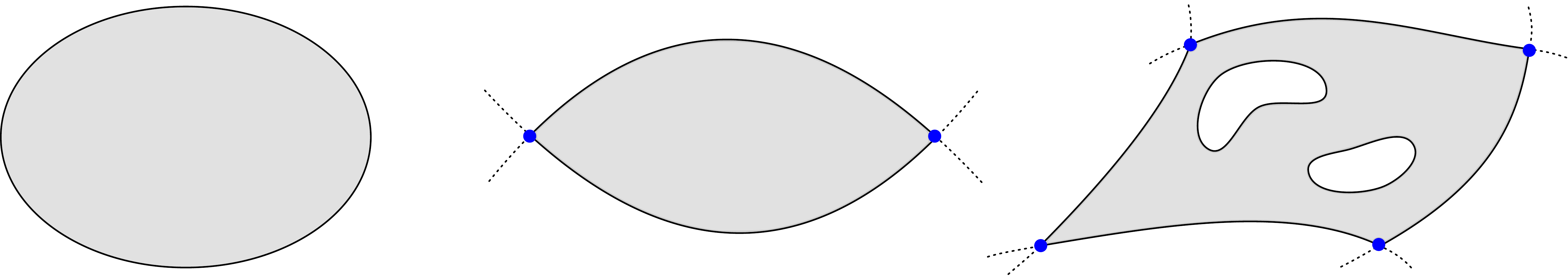}
		%					\begin{overpic}[grid,tics=5,width=11cm]{Figures/discregions.pdf}									
		\put(11,-3){\footnotesize $(i)$}
		\put(45,-3){\footnotesize $(ii)$}			
		\put(80,-3){\footnotesize $(iii)$}						
	\end{overpic}
	\bigskip
	\caption{ Example of regions $R_i(Z_0)$: $(i)\ a_i=0$, $(ii)\ a_i=2$ and  $(iii)\ a_i=4$.}	
\end{figure}  

From Proposition \ref{blocks}, it follows that, for a small neighborhood $\V$ of $Z_0$ in $\Or$, the region $M_Z=\overline{\s^{s}(Z)}\subset \rn{2}$ (use the same previous change of coordinates) has exactly the same configuration of $M_0$, for each $Z\in\V$. Indeed, $M_Z=R_1(Z)\cup\cdots\cup R_l(Z)$, where $R_i(Z)$ is a region with the same characteristics of $R_i(Z_0)$, i.e., $R_i(Z)$ is a simple polygon with $a_i$ vertices and the same number of holes that $R_i(Z_0)$ in its interior.

Also, for each fold-fold (vertex) $q(Z_0)$ of $R_i(Z_0)$, there exists a unique fold-fold of $q(Z)$ of $R_i(Z)$ with the same visibility of $q(Z_0)$, which is sufficiently close of $q(Z_0)$.

\begin{lem} There exists a homeomorphism $h_{Z}^{i}: R_{i}(Z_0)\rightarrow R_{i}(Z)$ which preserves the type of singularity of the boundary  and carries sliding orbits of $Z_0$ onto sliding orbits of $Z$, for $i=1,\cdots,l$. In addition, $h_Z^i(q_j(Z_0))=q_j(Z)$ for each $q_j(Z_0)\in R_{i}(Z_0)$, $j=1,\cdots,n_f$, $i=1,\cdots,l$.
\end{lem}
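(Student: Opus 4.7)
The plan is to apply a Peixoto-type argument to each region $R_i(Z_0)$, viewed as a compact $2$-manifold whose boundary is piecewise smooth with finitely many corners at the fold-fold vertices. By Lemma \ref{sliding}, the normalized sliding vector field $F_{Z_0}^{N}$ extends $\Cr$-smoothly to $\overline{R_i(Z_0)}$, and the defining conditions of $\s_0(SLR)$ translate into the natural Peixoto-style hypotheses for this extended vector field: finitely many hyperbolic pseudo-equilibria and hyperbolic periodic orbits in the interior (by $I_1$, $I_2$), no saddle connections (by $I_3$), no recurrent motion (by $R$), transversality of saddle separatrices to the smooth part of the boundary (by $B_2$), no sliding orbit linking two tangency points of $F_{Z_0}^{N}$ with $\partial R_i(Z_0)$ (by $B_1$), and a hyperbolic local picture at each fold-fold corner (by $F_1$, $F_2$, with $F_3$, $F_4$ ruling out separatrix connections through corners).

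The first step is to construct $h_Z^{i}$ on $\partial R_i(Z_0)$. Proposition \ref{blocks} supplies a canonical correspondence between the boundary data of $R_i(Z_0)$ and $R_i(Z)$: a bijection between the vertices $q_j(Z_0)\leftrightarrow q_j(Z)$ preserving visibility, a bijection between cusp-regular points, and an identification of the intermediate fold-regular arcs. I would set $h_Z^{i}(q_j(Z_0))=q_j(Z)$, send each cusp-regular point to its continuation for $Z$, and interpolate along each smooth arc by any boundary-preserving homeomorphism. Next, by hyperbolicity of pseudo-equilibria and of periodic orbits, together with the implicit function theorem and continuity of invariant manifolds, for $Z$ sufficiently close to $Z_0$ the critical elements of $F_{Z}^{N}$ in $R_i(Z)$ are in bijection with those of $F_{Z_0}^{N}$ in $R_i(Z_0)$ preserving topological type, and their stable and unstable manifolds depend continuously on $Z$.

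The next step is the classical canonical-region strategy. The union $\Gamma_0$ of pseudo-equilibria, periodic orbits, corner singularities, and their invariant manifolds forms a finite embedded graph in $R_i(Z_0)$, and $R_i(Z_0)\setminus \Gamma_0$ decomposes into finitely many canonical regions (parallel flow boxes, hyperbolic sectors, and annuli around hyperbolic periodic orbits). Conditions $(B_1)$, $(B_2)$ ensure that the arcs of $\partial R_i(Z_0)$ between tangency points of $F_{Z_0}^{N}$ sit inside well-behaved canonical regions. I would first define $h_Z^{i}$ on $\Gamma_0$, matching corresponding critical elements and their separatrices, in a way compatible with the boundary map already constructed at $\partial R_i(Z_0)\cap \Gamma_0$, and then extend canonical region by canonical region using flow-conjugation along trajectories, much as in Peixoto's original argument.

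The main obstacle will be handling the corner singularities $q_j(Z_0)$, where the standard Peixoto argument does not apply because $\partial R_i(Z_0)$ is not transverse to $F_{Z_0}^{N}$ there. Conditions $(F_1)$, $(F_2)$ guarantee that each corner singularity is hyperbolic \emph{from the inside}, that is, has no center manifold inside $\overline{R_i(Z_0)}$, so the local phase portrait of $F_{Z_0}^{N}$ on the wedge at $q_j(Z_0)$ is topologically rigid under $\Cr$-small perturbations; conditions $(F_3)$, $(F_4)$ then prevent separatrix connections involving these corners. Building an \emph{ad hoc} local homeomorphism on each such wedge sending $q_j(Z_0)$ to $q_j(Z)$ and matching the global Peixoto-type extension on the complement is the delicate point; once this local model is in place the remaining gluings are routine, and taking a common neighborhood $\V$ of $Z_0$ for which all the above persistence statements hold simultaneously produces the desired $h_Z^{i}$.
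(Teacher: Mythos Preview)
Your proposal is correct and takes essentially the same approach as the paper: both isolate neighborhoods of the fold-fold vertices, construct local sliding equivalences there using the rigidity of the local phase portrait guaranteed by $F_1$, $F_2$ (the paper enumerates five explicit local models I--V at such corners and cites \cite{CJ1,GT,T1,T3} for the local homeomorphisms), and then invoke the classical Peixoto argument on the complement. The only expository difference is that the paper excises the corner neighborhoods $V_1,V_2$ first and then verifies the Peixoto--Peixoto boundary conditions \textbf{B5}, \textbf{B6} of \cite{PP} at the artificial vertices $\partial V_i\cap A_j$, whereas you absorb the corner wedges directly into your canonical-region decomposition.
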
	
\begin{proof}{
		Let $R$ be one of the regions $R_{i}(Z_0)$ and let $Z\in\V$. 
		
		If $R$ has no vertices, then $\partial R$ is smooth and so there exists a diffeomorphism $\Psi:R\rightarrow\widetilde{R}$, where $\widetilde{R}=R_i(Z)$. Therefore, we may construct a homeomorphism $\widetilde{h}:R\rightarrow R$ between $Z_0$ and $\Psi_*Z$, via the classical Peixoto's Theorem. Hence, $h=\Psi\circ \widetilde{h}$ satisfies the properties of the Lemma.
		
		Now, for simplicity, assume that $R$ is a simple polygon with only two vertices $q_1$ and $q_2$, which has no holes. We stress that there is no loss of generality in this assumption, since the following construction can be easily extended to any configuration of $R$.
		
		Therefore, $\widetilde{R}$ has two vertices $\widetilde{q_1}$ and $\widetilde{q_2}$ which are sufficiently near from $q_1$ and $q_2$, respectively.
		
		\begin{figure}[H]
			\centering
			\bigskip
			\begin{overpic}[width=7cm]{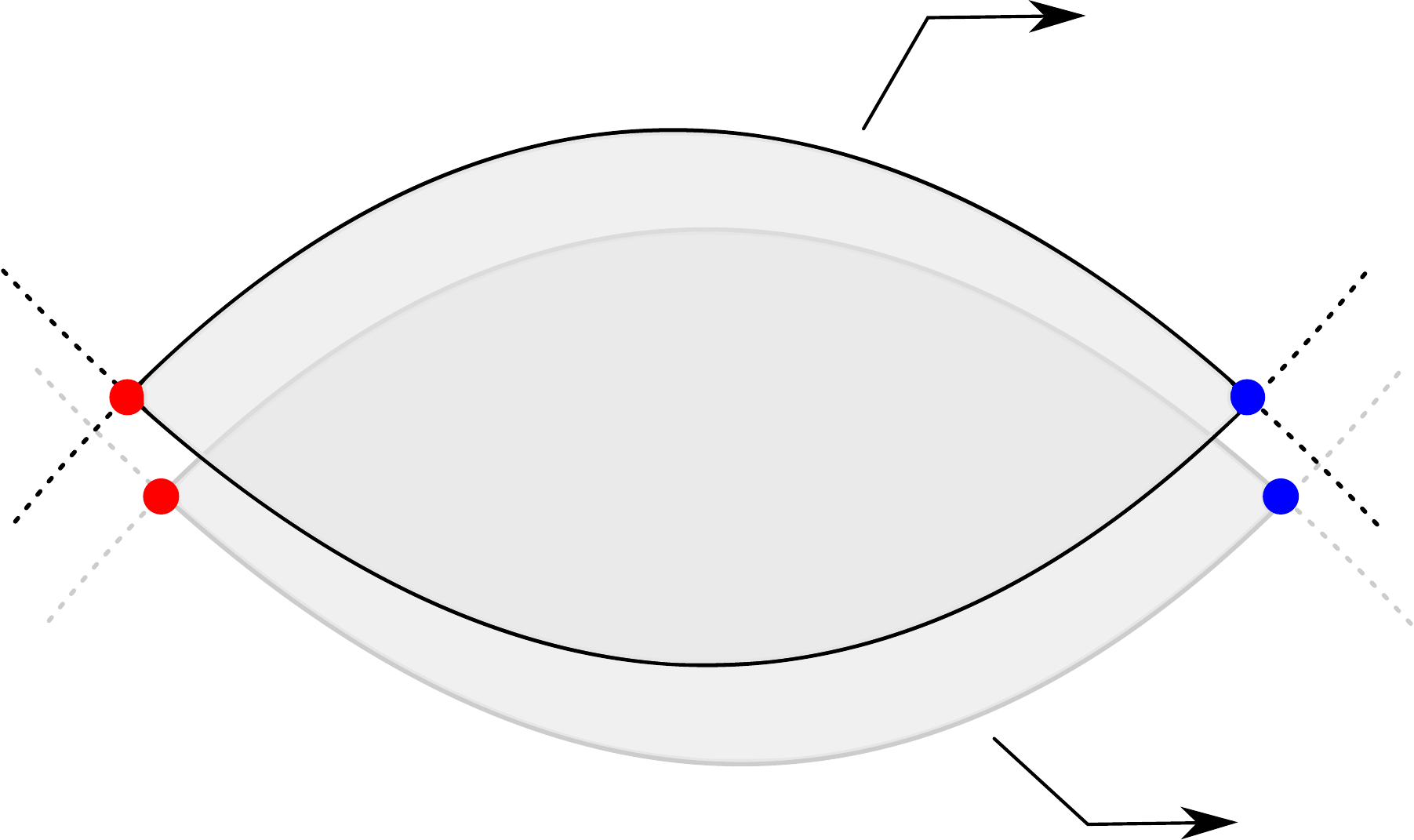}
				%					\begin{overpic}[grid,tics=5,width=11cm]{Figures/discregions.pdf}									
				\put(10,20){\footnotesize $q_1$}
				\put(7,35){\footnotesize $\widetilde{q_1}$}			
				\put(89,20){\footnotesize $q_2$}
				\put(87,35){\footnotesize $\widetilde{q_2}$}	
				\put(88,0){\footnotesize $R$}
				\put(77,56){\footnotesize $\widetilde{R}$}						
			\end{overpic}
			\bigskip
			\caption{ Persistence of the region $R$.}	
		\end{figure}  
		
		Notice that $\partial R= A_1\cup A_2$, where $A_i$, $i=1,2$, is an open arc with extrema $q_1$ and $q_2$ just composed by fold-regular and cusp-regular points of $Z_0$. Recall that, from Lemma \ref{sliding}, $F_{Z_0}^N$ is transverse to $A_i$ at fold-regular points and it has a quadratic contact with $A_i$ at cusp-regular points, $i=1,2$. Hence, $F_{Z_0}^N$ satisfies all the hypotheses of Peixoto's Theorem, with exception of the points $q_1$ and $q_2$.

		Hence, we must clarify what happens around points $q_1$ and $q_2$, i.e., fold-fold points of $Z_0$. Let $q$ be either $q_1$ or $q_2$, from \cite{GT} (see sliding dynamics' section), we obtain that under hypotheses \textbf{$F_1$} and \textbf{$F_2$}, there exists a neighborhood $V$ of $q$ in $\rn{2}$, such that $A_i$ is transversal to $\partial V$ and $F_{Z_0}^N$ satisfies either one of the following:
		
		\begin{enumerate}
			\item[I $-$] $(i)$ $F_{Z_0}^N$ is transversal to $\partial V\cap R$, $(ii)$ there exists a unique orbit $\Gamma_0$ of $F_{Z_0}^N$ departing from $q$ and reaching $\partial V\cap R$, and $(iii)$ each orbit passing through another point of $V\cap R$ departs from an arc $A_i$, $i=1,2$, and reaches $\partial V\cap R$ at finite time.\vspace{0.2cm}
			\item[II $-$] $(i)$ $F_{Z_0}^N$ is transversal to $\partial V\cap R$, $(ii)$ each orbit passing through a point of $V\cap R$ departs from $(\partial V\cap R) \cup (A_1\cap V) \cup (A_2\cap V)$ and reaches $q$ in infinite positive time.\vspace{0.2cm}
			\item[III $-$] $(i)$ $F_{Z_0}^N$ is transversal to $\partial V\cap R$, with exception of a point $x_0\in\partial V\cap \textrm{int}(R)$, where $F_{Z_0}^N$ has a quadratic contact with $\partial V\cap R$. $(ii)$ Each orbit of $V\cap R$ either departs from $A_1$ and reaches $A_2$ or it departs from $\partial V$ and reaches $\partial V$.\vspace{0.2cm}
			\item[IV $-$] $(i)$ $F_{Z_0}^N$ is transversal to $\partial V\cap R$, $(ii)$ there exists a $\s$-separatrix (type node) $\Gamma_0$ of $q$ which reaches $\partial V\cap R$ at $x_0$, and $(ii)$ the orbit passing through another point of $V\cap R\setminus \Gamma_0$ either departs from $A_{i_1}$ and reaches $\partial V$ or it departs from $q$ and reaches $A_{i_2}\cap \partial V$, $\{i_1,i_2\}=\{1,2\}$.\vspace{0.2cm}
			\item[V $-$] $(i)$ $F_{Z_0}^N$ is transversal to $\partial V\cap R$, with exception of a point $x_0\in\partial V\cap \textrm{int}(R)$, where $F_{Z_0}^N$ has a quadratic contact with $\partial V\cap R$. $(ii)$ There exists two separatrices (type saddle) of $q$ which reaches $\partial V$ at $x_1$ and $x_2$, respectively, and $x_0$ is between $x_1$ and $x_2$. $(iii)$ Each orbit through a point of $V\cap R$ either departs from $\partial V$ and reaches $A_{i_1}$ or it departs from $\partial V$ and reaches $\partial V$ or it departs from $A_{i_{2}}$ and reaches $\partial V$, $\{i_1,i_2\}=\{1,2\}$. 
		\end{enumerate}
		
		\begin{figure}[H]
			\centering
			\bigskip
			\begin{overpic}[width=12cm]{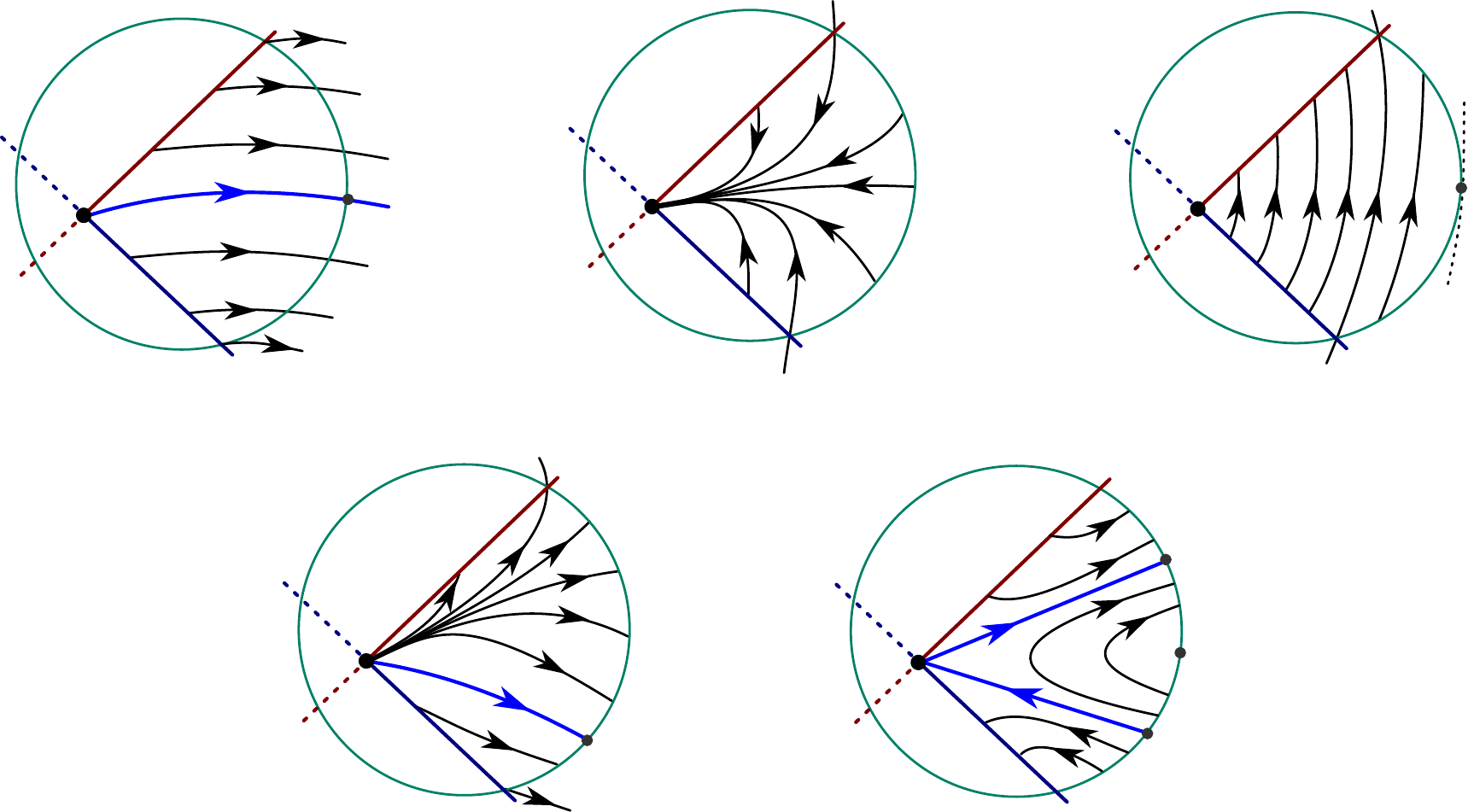}
				%					\begin{overpic}[grid,tics=5,width=11cm]{Figures/discregions.pdf}									
				\put(27,40){\footnotesize $\Gamma_0$}
				\put(3,52.5){\footnotesize $\partial V$}			
				\put(11,27){\footnotesize (I)}
				\put(49.5,27){\footnotesize (II)}			
				\put(86,27){\footnotesize (III)}	
				\put(101,42){\footnotesize $x_0$}	
				\put(7,35){\footnotesize $A_1$}		
				\put(8.5,47){\footnotesize $A_2$}		
				\put(30,-3){\footnotesize (IV)}	
				\put(41,4){\footnotesize $\Gamma_0$}					
				\put(67.5,-3){\footnotesize (V)}	
				\put(79,4.5){\footnotesize $x_1$}				
				\put(81.5,10){\footnotesize $x_0$}	
				\put(81,17){\footnotesize $x_2$}																						
			\end{overpic}
			\bigskip
			\caption{ Description of the sliding dynamics near a fold-fold point: I-V.}	
		\end{figure}  
		
		We remark that all the situations $I-V$ may happen also for negative time.
		
		Let $\widetilde{q}$  the point $\widetilde{q_i}$ which is near from $q$. Since $Z$ is suffciently near from $Z_0$, it follows that $\widetilde{q}\in V$ and $F_Z^N$ also satisfies the same property of $F_{Z_0}^N$ ($I-V$) in $V$. Hence it is straightforward to construct a homeomorphism $h_{q}: \overline{V}\cap R\rightarrow \overline{V}\cap \widetilde{R}$, such that $h_q(q)=\widetilde{q}$, which carries sliding orbits of $Z_0$ onto sliding orbits of $Z$, see \cite{CJ1,GT,T1,T3}.
		
		Notice that $v\in \partial V\cap A_{i}$ is a vertex point in the sense of \cite{PP} (it is not a fold-fold point of $Z_0$), and it satisfies that $F_{Z_0}^{N}$ is transversal to both $\partial V$ and $A_i$ at $v$. Therefore $F_{Z_0}^{N}$ satisfies condition \textbf{B5} of \cite{PP}.
		
		In addition, we can reduce the neighborhoods $V$ of each fold-fold point of $R$ in order that there is no trajectory of $F_{Z_0}^N$ connecting two vertices. Hence it also satisfies condition \textbf{B6} of \cite{PP}. 
		
		\begin{figure}[H]
			\centering
			\bigskip
			\begin{overpic}[width=10cm]{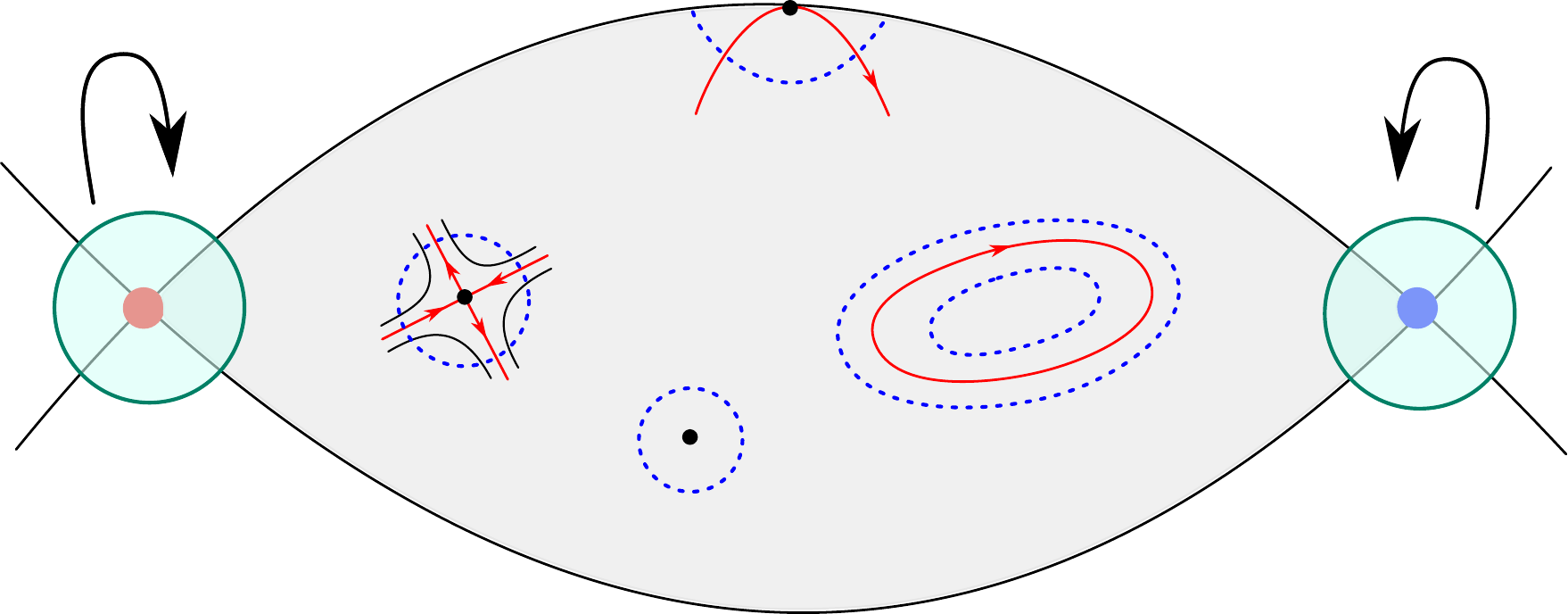}
				%					\begin{overpic}[grid,tics=5,width=11cm]{Figures/discregions.pdf}									
				\put(1,35){\footnotesize $h_{q_1}$}
				\put(8,16){\footnotesize $q_1$}			
				\put(10,11){\footnotesize $V_1$}		
				\put(95,35){\footnotesize $h_{q_2}$}
				\put(90,16){\footnotesize $q_2$}			
				\put(92,11){\footnotesize $V_2$}			
				\put(29,31){\footnotesize $R\setminus W$}									
			\end{overpic}
			\bigskip
			\caption{ Construction of the homeomorphism $h^i_Z$ using distinguished neighborhoods of hyperpolic equilibrium points, hyperbolic periodic orbits, and quadratic tangecy points  of $F_{Z_0}^N$ in $R\setminus W$ obtained via Peixoto's Theorem.}	
		\end{figure} 
		
		After doing this process for the points $q_1$ and $q_2$, we obtain two neighborhoods $V_1$ and $V_2$ as above, let $W=V_1\cup V_2$. Since $Z_0$ satisfies the hypotheses from Peixoto's Theorem inside $R\setminus W$ we can use the same methods as used in \cite{PP} to extend the homeomorphisms $h_{q_1}$ and $h_{q_2}$ into a homeomorphism $h_{Z}: R\rightarrow \widetilde{R}$ satisfying the properties of the claim.}		
\end{proof}
\bigskip

Therefore, all the homeomorphisms $h_{Z}^{i}$ can be used to construct a homeomorphism $h_Z^{s}:M_0\rightarrow M_Z$. Now, we can return to the initial coordinates of $\s-\{p_0\}$ to obtain a homeomorphism $\widetilde{h_Z^{s}}:\overline{\s^{s}(Z_0)}\rightarrow \overline{\s^{s}(Z)}$. Hence, considering any extension $h_Z:\s\rightarrow\s$ of $\widetilde{h_Z^{s}}$, we conclude that $Z_0\in\Or_{SLR}$.

Therefore, $\s_0(SLR)\subset\Or_{SLR}$. Now, if $Z_0$ doesn't satisfy some condition of $\s_0(SLR)$, then we conclude that $Z_0\notin \Or_{SLR}$ (see \cite{GT,PM,PP}). Therefore, $\Or_{SLR}=\s_0(SLR)$.

Since $\s_0(SLR)$ is a residual set in $\Or$, the proof is complete.

\begin{rem}\label{remsl}
	From the proof of Theorem A, it follows that the construction of the equivalence $h:\s\rightarrow\s$ around fold-fold points can be made in several ways. In particular, any local equivalence $h_q$ at a fold-fold $q$ may extend itself in a sliding equivalence $h:\s\rightarrow\s$ of Definition \ref{slequivalencedef}.
\end{rem}

\section{Proof of Theorem B}

Let $Z_0\in\s_0$, we shall prove that $Z_0$ is $\s$-block structurally stable. If $Z_0$ has no $\s$-blocks, then $Z_0\in\Or_\s$ (see Proposition \ref{nosigma}). Let $U_0$ be a $\s$-block of $Z_0$.

If $U_0=\s$, then condition \textbf{S} of $\s_0$ allows us to find a neighborhood $\V$ of $Z_0$ such that, for each $Z\in\V$, there exists a homeomorphism $h_s:\s\rightarrow\s$ carrying sliding orbits of $Z_0$ onto sliding orbits of $Z$ preserving tangential singularities. Now, we follow the same idea of the proof of Proposition \ref{nosigma} to construct a neighborhood $U$ of $\s$ and a homeomorphism $h:U\rightarrow U$ such that $h|_{\s}=h_s$ satisfying Definition \ref{semiequivalencedef} ($N=\s$), and we conclude that $Z_0\in\Or_{\s}$.

Finally, assume that $U_0\neq \s$, which means that $S_{Z_0}\neq \emptyset$. Hence, $\partial U_0$ is a reunion of circles of $S_{X_0}$ and $S_{Y_0}$ intersecting themselves transversally at fold-fold points of $Z_0$ (Proposition \ref{SZ}). 

From Proposition \ref{blocks}, there exist neighborhoods $\V_0$ of $Z_0$ in $\Or$ and $V_0$ (compact) of $U_0$ in $\s$, such that $\partial V_{0}\subset\s^{c}(Z_0)$, and each $Z\in\V_0$ has a unique $\s$-block $U$ in $V_0$ with the same characteristics of $U_0$, i.e., $Z\in\V$ satisfies the following properties:
\begin{itemize}
	\item[(i)] $U_0$ and $U$ have the same number of cusp-regular and fold-fold points of the same type;
	\item[(ii)] There exists $\e_0>0$, such that, if $p\in\partial U$ is either a cusp-regular or fold-fold point, then there exists a unique point $p_0\in\partial U_0$ of the same type of $p$ such that $|p-p_0|<\e_0$;
	\item[(iii)] If $p_0^{1}$ and $p_0^{2}$ are points of $\partial U_0$ connected by a curve $\Gamma_0$ of either visible or invisible fold-regular points contained in $\partial U_0$, then there exists points $p^1$ and $p^2$ of $\partial U$ of the same type of $p_0^{1}$ and $p_0^{2}$, respectively, and a unique curve $\Gamma\subset\partial U$ of fold-regular points of the same type of $\Gamma_0$, such that $d(\Gamma,\Gamma_0)<\e_0$ ($d$ denotes the Hausdorff distance). 
\end{itemize}

Notice that, it implies that the circles of $U_0$ and $U$ have the same configuration, for each $Z\in\V_0$. Let $Z\in\V$. We shall construct a semi-local equivalence between $Z_0$ and $Z$ at $U_0$. 

\subsection{Local Description of the Invariant Manifolds of Elementary Tangential Singularities}\label{local}

Firstly, we use Vishik's Normal Form Theorem \ref{Vishik} to distinguish local invariant manifolds of elementary tangential singularities.

\subsubsection{Fold-Regular} \label{fold-reg}

Let $p_0$ be a fold-regular point of $Z_0$ in $\partial U_0$, and without loss of generality, assume that $p_0\in S_{X_0}$.

From Theorem \ref{Vishik}, there exists a diffeomorphism $\Psi:V_{p_0}\rightarrow R_{p_0}$ (denote the coordinate functions of $\Psi$ by $(x_{\Psi},y_{\Psi},z_{\Psi})$) such that $\Psi(p_0)=\vec{0}$, $V_{p_0}$ is a compact neighborhood of $p_0$ in $M$, $R_{p_0}=[-l(p_0),l(p_0)]\times[-H(p_0),H(p_0)]^2\subset\rn{3}$, for some $H(p_0),l(p_0)>0$, $f(x_{\Psi},y_{\Psi},z_{\Psi})=z_{\Psi}$ and the orbit through a point $p\in V_{p_0}$ of $X_0$ is carried into the orbit of $\widetilde{X_0}(x,y,z)= (0,1,\xi y)$ passing through $\Psi(p)$, where $\xi$= sgn($X_0^{2}f(p_0)$). 

Notice that $l(p_0)$ can be taken small enough such that each $Y_{0}f(q)\neq 0$, for each $q\in V_{p_0}$. The flow of $\widetilde{X_0}$ is given by: 		
\begin{equation}
\varphi_{\widetilde{X_0}}(t;x_{0},y_{0},z_{0})= \left(x_{0},\ t+y_{0},\ \xi\dfrac{(t+y_{0})^{2}}{2} + z_{0}-\xi \dfrac{y_{0}^{2}}{2}\right),
\end{equation}

Firstly, consider $\xi>0$ and notice that $\widetilde{X_0}$ is transverse to each side of $R_{p_0}\cap M^{+}$, except for the fold-regular lines $L(\alpha)=\{(x,y,z); |x|\leq l(p_0), y=0, z=\alpha\}$, $\alpha=0,H(p_0)$, and sgn$(\widetilde{X_0}^2f(p))=$sgn$(\widetilde{X_0}^2f(p_0))$, for each $p\in L(0)\cup L(H(p_0))$.

Now, the trajectory of $\widetilde{X_0}$ through $(x,0,0)\in L(0)$ reaches $z=H(p_0)$ at the points $x^{\pm}=\left(x,\pm \sqrt{2H(p_0)}, H(p_0)\right)$, when $t=\pm \sqrt{2H(p_0)}$, respectively. Choosing $H(p_{0})$ sufficiently small, it follows that $x^{\pm}\in R_{p_0}$, for every $x\in L(0)$.

In addition, using the Flow Box Theorem, we can reduce $H(p_0)$ and find a diffeomorphism $\varPhi: R_{p_0}\cap M^{-}\rightarrow R_{p_0}\cap M^{-}$ such that $\varPhi|_{z=0}=Id$ and the orbit of $Y_0$ through $p\in R_{p_0}\cap M^{-}$ is carried onto the orbit of $\widetilde{Y_{0}}(x,y,z)=(0,0,1)$ passing through $\varPhi(p)$. Considering the homeomorphism $\Theta: V_{p_0}\rightarrow R_{p_0}$ (which is a piecewise diffeomorphism) given by:
$$\Theta(p)=\left\{\begin{array}{lcl}
\Psi(p) & \textrm{ if } & p\in M^+,\\
\Psi\circ \varPhi(p) & \textrm{ if } & p\in M^-,\\		 
\end{array}\right. $$
we define the \textbf{local 2-dimensional invariant manifold of $X_0$ at $p_0$} as:
\begin{equation}
W_{p_0}=\Theta^{-1}\left( W_{p_0}^+ \cup W_{p_0}^-\right),
\end{equation}
where $W_{p_0}^+=\left\{(x,t,t^{2}/2); |x|\leq l(p_0),\ |t|\leq\sqrt{2H(p_0)})\right\}$, and $W_{p_0}^-=\{(x,0,t);  |x|\leq l(p_0),\ -H(p_0)\leq t\leq 0 \}$.

\begin{rem}
	Notice that $\Theta$ preserves the foliation generated by $Z_0$ and $\widetilde{Z_{0}}=(\widetilde{X_{0}},\widetilde{Y_{0}})$, nevertheless it does not preserve the orientation of the orbits. 
	
	It is worth to mention that, since $W_{p_0}$ depends exclusively on the flow of $X_0$ and $Y_0$ and on the tangential curve of $X_0$ with $\s$, it follows that the existence of $W_{p_0}$ does not depend on $\Theta$. Nevertheless, $\Theta$ provides a complete characterization of $W_{p_0}$.
\end{rem}	

In this case, the foliation $\mathcal{F}$ generated by $Z_0$ in $\textrm{int}(V_{p_0})$ can be characterized in the following way. Let $N_p$ be the normal vector of $\s$ at $p$ and consider the $2$-dimensional manifold $\Lambda=\{p+\lambda N_p;\ \lambda>0, p\in S_{Z_0}\}\cap V_{p_0}$. Therefore, each leaf of $\mathcal{F}\setminus W_{p_0}$ is either a piecewise-smooth curve which passes transversally through a point of $\s$ and intersects $\partial V_{p_0}$ transversally in two points (one in $M^+$ and another in $M^-$) or it is a smooth curve which passes transversally through a point in $\Lambda$ and  intersects $\partial V_{p_0}$ in two points of $M^{+}$.		

\begin{figure}[H]
	\centering
	\bigskip
	\begin{overpic}[width=8.5cm]{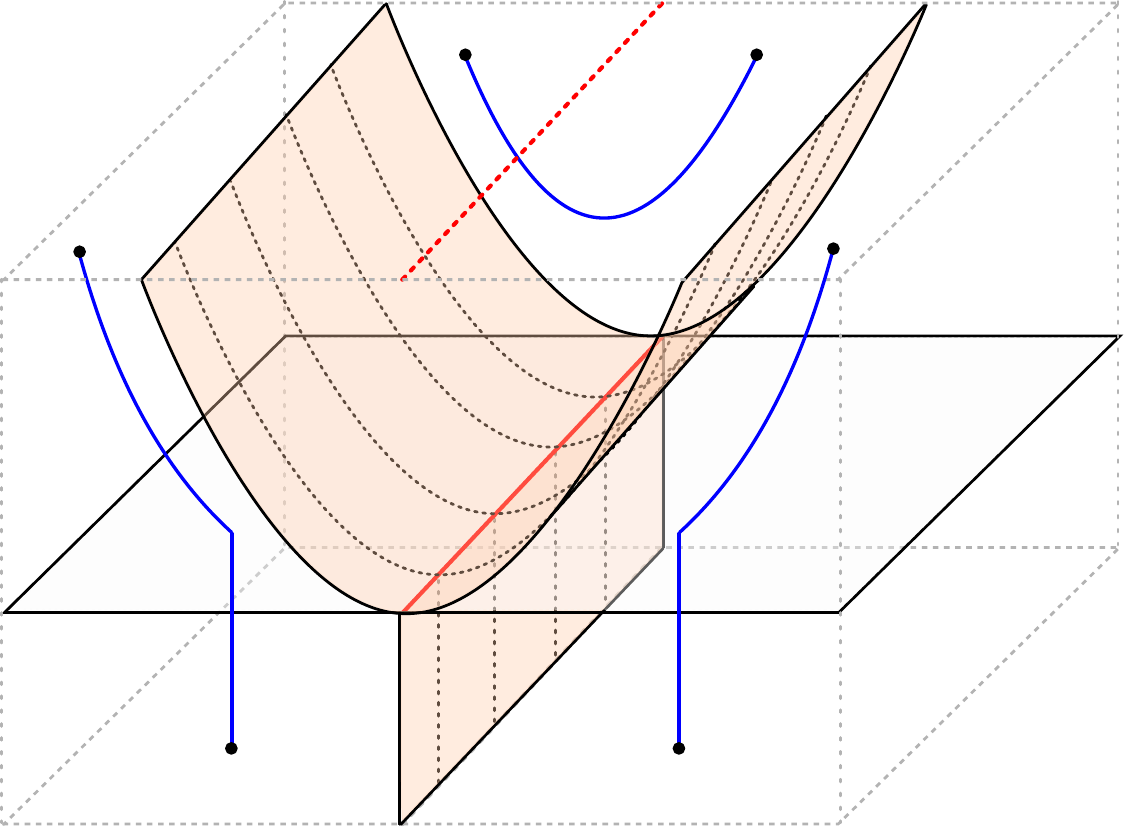}
		%							\begin{overpic}[grid,tics=5,width=10cm]{Figures/VFmanifold.pdf}		
		\put(10,65){{\footnotesize $R_{p_0}$}}		
		\put(90,30){{\footnotesize $\s$}}			
		\put(28.5,15.5){{\scriptsize $L(0)$}}	
		\put(42.5,70.5){{\scriptsize $L(H(p_0))$}}										
	\end{overpic}
	\bigskip
	\caption{The local invariant manifold $W_{p_0}$ for a visible fold-regular point.  }
\end{figure} 

Now, if $\xi<0$, we can define the same objects by changing the roles of $L(0)$ and $L(H(p_0))$. Nevertheless, we consider $W^{-}_{p_0}=\{(x,y,t);  |x|\leq l(p_0),\ -H(p_0)\leq t\leq 0,\ y=0 \textrm{ or } y=\pm\sqrt{2H(p_0)} \}$. Also, in this case, the leaves of the foliation passing through $\Lambda$ are also piecewise-smooth, intersect $\partial V_{p_0}$ at two points of $M^-$ and intersect $\s$ at two points, which are in opposite sides in relation to $S_{Z_0}$).

\begin{figure}[H]
	\centering
	\bigskip
	\begin{overpic}[width=8.5cm]{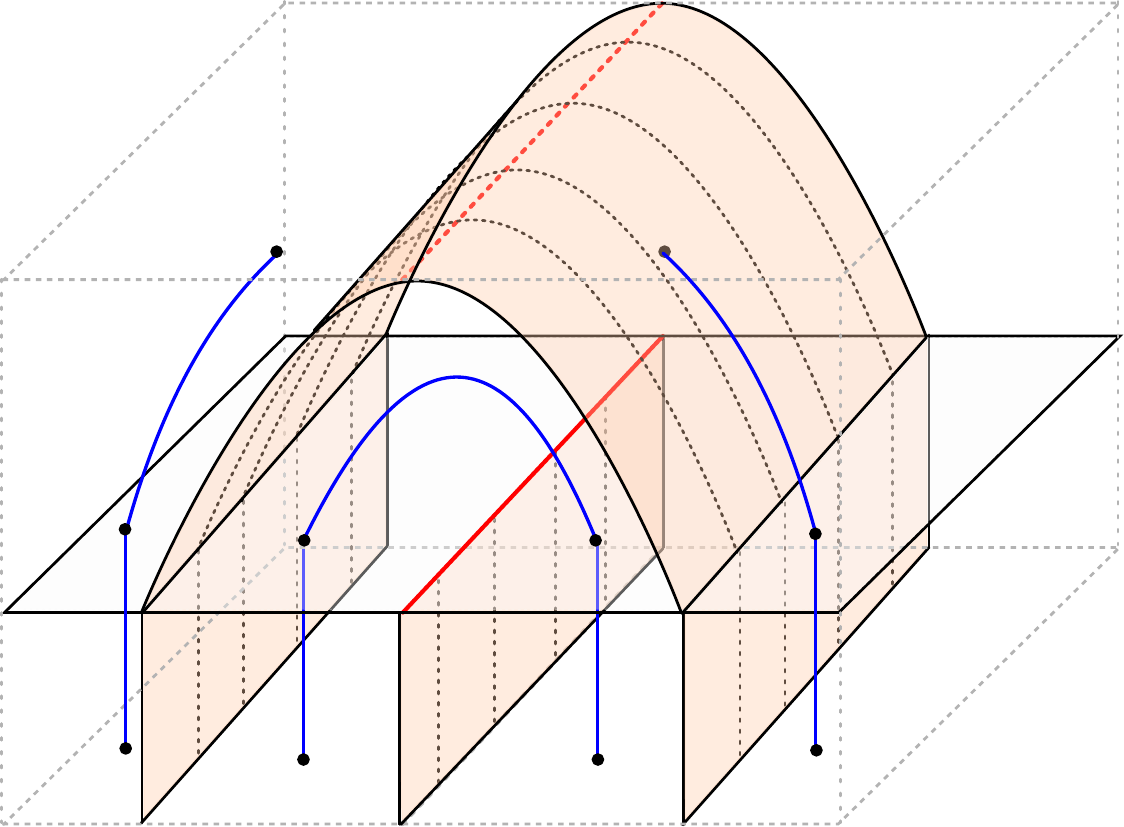}
		%							\begin{overpic}[grid,tics=5,width=10cm]{Figures/VFmanifold.pdf}		
		\put(10,65){{\footnotesize $R_{p_0}$}}		
		\put(90,30){{\footnotesize $\s$}}			
		\put(28.5,15.5){{\scriptsize $L(0)$}}	
		\put(44,74){{\scriptsize $L(H(p_0))$}}							
	\end{overpic}
	\bigskip
	\caption{The local invariant manifold $W_{p_0}$ for an invisible fold-regular point. }
\end{figure}

\begin{rem}\label{remff}
	Notice that if $X_0\in\Xr$ is a smooth vector field defined in a $\overline{\s^{+}}$ which has a fold point at $p_0$, then we can construct the local invariant manifold of $X_0$ at $p_0$ in the same way.	
\end{rem}	

\subsubsection{Cusp-Regular}

Let $p_0$ be a cusp-regular point of $Z_0$ in $\partial U_0$, and assume that $p_0\in S_{X_0}$.

Following the same arguments and notation of Section \ref{fold-reg}, there exists a diffeomorphism $\Psi(p_0)=\vec{0}$, $V_{p_0}$ such that $f(x_{\Psi},y_{\Psi},z_{\Psi})=z_{\Psi}$ and the orbit through a point $p\in V_{p_0}$ of $X_0$ is carried into the orbit of $\widetilde{X_0}(x,y,z)= (1,0,\xi x^2+y)$ passing through $\Psi(p)$, where $\xi$= sgn($X_0^{3}f(p_0)$) in this case. 

We only consider $\xi=1$, since the invariant manifolds obtained when $\xi=-1$ are completely analogous.  The flow of $\widetilde{X_0}$ is given by: 		
\begin{equation}
\varphi_{\widetilde{X_0}}(t;x_{0},y_{0},z_{0})= \left(t+x_{0},\ y_{0},\ \dfrac{(t+x_{0})^{3}}{3} + y_0 t + z_{0}- \dfrac{x_{0}^{3}}{3}\right),
\end{equation}

Now, $\widetilde{X_0}$ is transverse to each side of $R_{p_0}\cap M^{+}$, except for the curves $L(\alpha)=\{(x,y,z); |x|\leq \sqrt{l(p_0)}, y=-x^2, z=\alpha\}$, $\alpha=0,H(p_0)$. In addition, sgn$(\widetilde{X_0}^2f((x,y,z)))=$sgn$(x)$, for each $(x,y,z)\in L(0)\cup L(H(p_0))$ with $x\neq 0$, and $(0,0,\alpha)$ are cusp points of $\widetilde{X_0}$ such that sgn$(\widetilde{X_0}^3f((0,0,\alpha)))= 1$.

Let, $p_x=(x,-x^{2},0)$ be a visible fold-regular point of $L(0)$ ($x>0$), then the orbit through $p_x$ is given by $\gamma_{x}(t)=(t+x, -x^{2}, (t+x)^{3}/3-x^{2}t-x^{3}/3)$. In particular, if $h_0=3/4H(p_0)>0$, then $\gamma_{h_0}$ intersects $H(p_0)$ at $(-h_0, -h_0^{2}, H(p_0))\in L(H(p_0))$ (take $H(p_0)$ such that $h_0<\sqrt{l(p_0)}$). Also, notice that $\gamma_{h_0}$ is the orbit through the visible fold-regular point $(-h_0, -h_0^{2}, H(p_0))$.

In addition, $\gamma_{h}$ is tangent to the planes $z=0$ and $z=4/3{h}^{3}=\delta_h$ and it is contained in the plane $y=-h^2$. Also, $\gamma_{h}$ intersects $z=0$ at the points $({h},-{h}^2,0)$ and $P_{h}=(-2{h},-{h}^2,0)$, and it intersects $z=\delta_{h}$ at $Q_h=(-{h},-{h}^2,\delta_{h})$ and $(2{h},-{h}^2,\delta_{h})$. Notice that, $P_{h},Q_{h}\rightarrow \vec{0}$ and $\delta_{h}\rightarrow 0$ when ${h}\rightarrow 0$. Therefore, it presents the behavior presented in Figure \ref{behavior_cusp}. Consider $W_{p_0}^{+}(0)=\{\gamma_x(t);\ 0\leq x\leq\sqrt{l(p_0)},\ t\in I_{x}=[T_{-}(x), T_{+}(x)]\}$, where $I_x$ is the maximal interval such that $\gamma_x(I_x)\subset R_{p_0}$. 

\begin{figure}[H]
	\centering
	\bigskip
	\begin{overpic}[width=12cm]{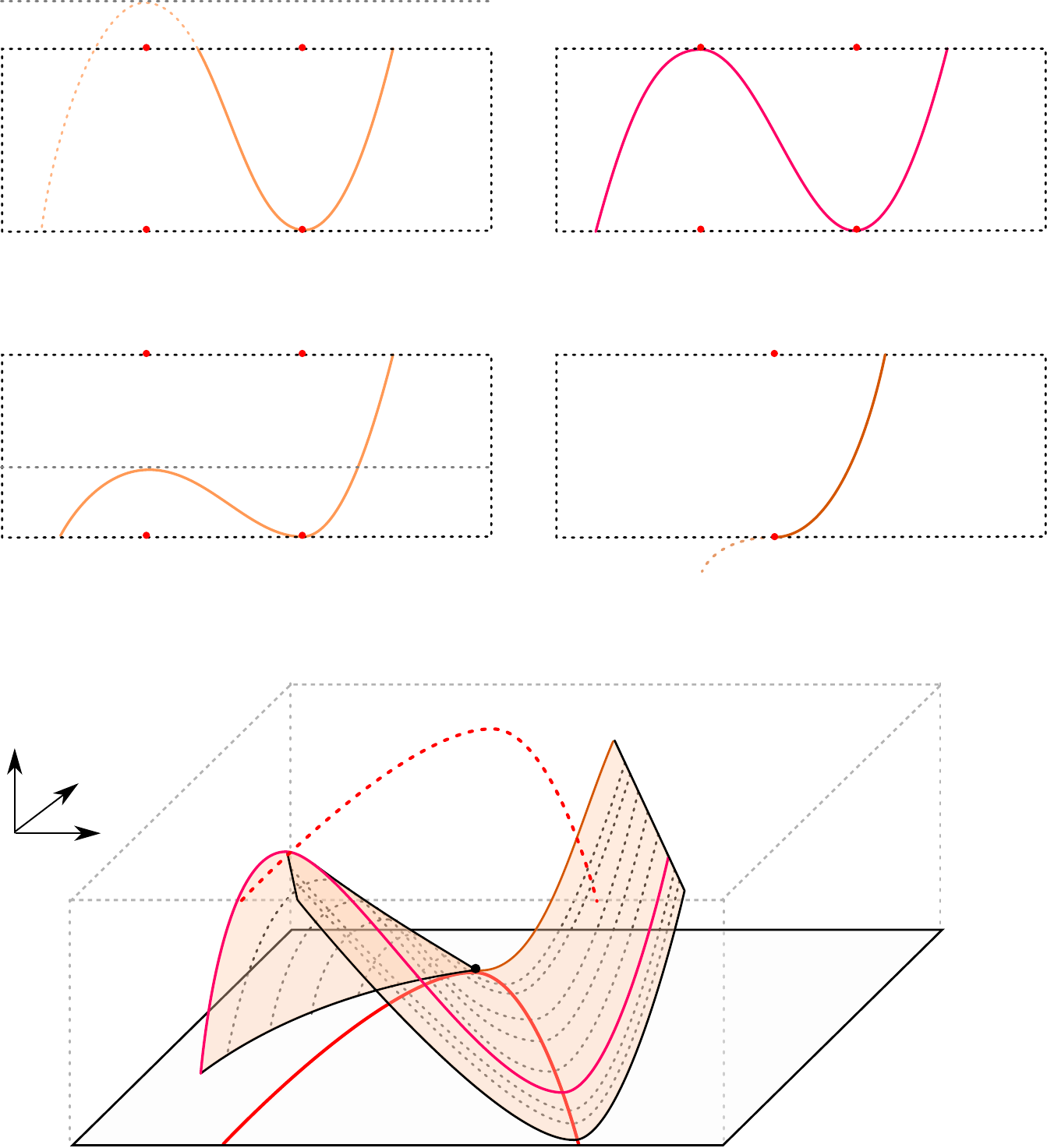}
		%							\begin{overpic}[grid,tics=5,width=10cm]{Figures/VFmanifold.pdf}		
		\put(18,75){{\footnotesize $h>h_0$}}	
		\put(3,78){{\footnotesize $P_h$}}	
		\put(12,101){{\footnotesize $Q_h$}}			
		\put(34,90){{\footnotesize $\gamma_h$}}		
		\put(52,78){{\footnotesize $P_{h_0}$}}	
		\put(60,97){{\footnotesize $Q_{h_0}$}}			
		\put(82,90){{\footnotesize $\gamma_{h_0}$}}										
		\put(77,62){{\footnotesize $\gamma_{0}$}}		
		\put(46,23){{\footnotesize $\gamma_{0}$}}																
		\put(3,51){{\footnotesize $P_h$}}	
		\put(12,60){{\footnotesize $Q_h$}}			
		\put(34,65){{\footnotesize $\gamma_h$}}																																				
		\put(65,75){{\footnotesize $h=h_0$}}		
		\put(16,49){{\footnotesize $h_0<h<0$}}			
		\put(65,49){{\footnotesize $h=0$}}		
		\put(35,101){\footnotesize $z=\delta_{h}$}			
		\put(85,96.5){\footnotesize $z=\delta_{h_0}$}		
		\put(35,60.5){\footnotesize $z=\delta_{h}$}																																	
		\put(16,35){{\footnotesize $R_{p_0}$}}			
		\put(21,30){{\scriptsize $L(H(p_0))$}}				
		\put(19,4){{\scriptsize $L(0)$}}
		\put(76,10){{\footnotesize $\s$}}			
		\put(15,15){{\footnotesize $\gamma_{h_0}$}}	
		\put(9.5,27){{\footnotesize $x$}}		
		\put(7.5,32.5){{\footnotesize $y$}}
		\put(1,35.5){{\footnotesize $z$}}																																																																				
	\end{overpic}
	\bigskip
	\caption{The local invariant manifold $W^{+}_{p_0}(0)$ for a cusp point and its description in the slice $y=-h^2$ of $R_{p_0}$.  } \label{behavior_cusp}
\end{figure} 

Changing the roles of $z=0$ and $z=H(p_0)$, we can define an analogous $2$-manifold $W_{p_0}^{+}(H(p_0))=\{\varphi_{\widetilde{X_0}}(t;x,-x^2,H(p_0));\ -\sqrt{l(p_0)}\leq x\leq 0,\ t\in I_{x}=[T_{-}(x), T_{+}(x)]\}$, where $I_x$ is the maximal interval such that $\varphi_{\widetilde{X_0}}(I_x;x,-x^2,H(p_0))\subset R_{p_0}$. %This manifold is described in Figure \ref{behavior_H}

\begin{figure}[H]
	\centering
	\bigskip
	\begin{overpic}[width=12cm]{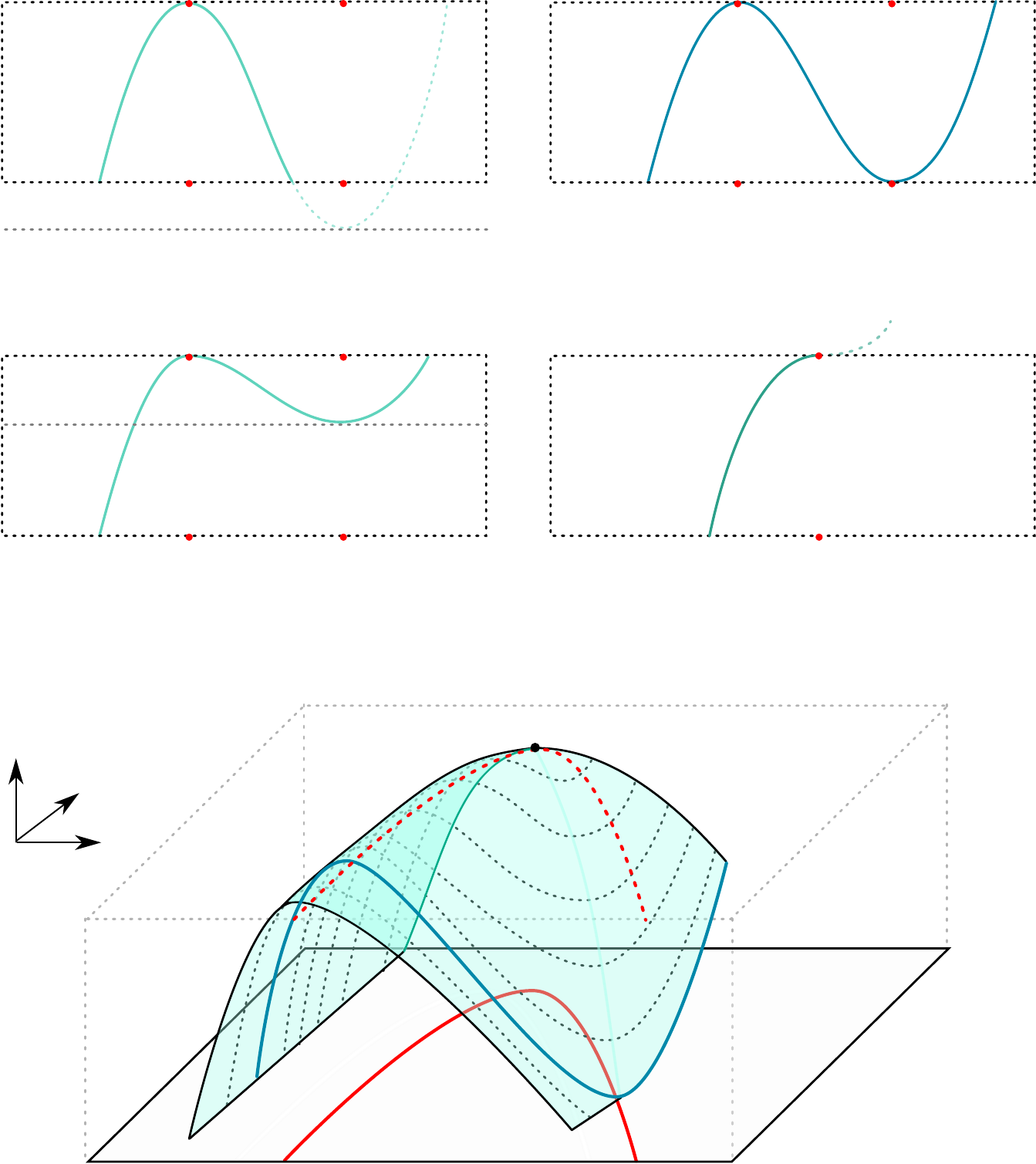}
		\put(9.5,27){{\footnotesize $x$}}		
		\put(7.5,32.5){{\footnotesize $y$}}
		\put(1,35.5){{\footnotesize $z$}}
		\put(18,75){{\footnotesize $h<-h_0$}}	
		\put(29,77){{\footnotesize $\widetilde{P_h}$}}	
		\put(15,101){{\footnotesize $\widetilde{Q_h}$}}			
		\put(7,90){{\footnotesize $\widetilde{\gamma_h}$}}		
		\put(76,81){{\footnotesize $P_{h_0}$}}	
		\put(62,101){{\footnotesize $Q_{h_0}$}}			
		\put(52,90){{\footnotesize $\gamma_{h_0}$}}										
		\put(60,62){{\footnotesize $\widetilde{\gamma_{0}}$}}		
		\put(40,29){{\footnotesize $\widetilde{\gamma_{0}}$}}																
		\put(28,61){{\footnotesize $\widetilde{P_h}$}}	
		\put(15,70.5){{\footnotesize $\widetilde{Q_h}$}}			
		\put(9,65){{\footnotesize $\widetilde{\gamma_h}$}}																																				
		\put(65,75){{\footnotesize $h=-h_0$}}		
		\put(16,49){{\footnotesize $-h_0<h<0$}}			
		\put(65,49){{\footnotesize $h=0$}}		
		\put(35,81){\footnotesize $z=\widetilde{\delta_{h}}$}			
		\put(82,81.5){\footnotesize $z=\widetilde{\delta_{h_0}}=0$}		
		\put(34,60.5){\footnotesize $z=\widetilde{\delta_{h}}$}																																	
		\put(16,35){{\footnotesize $R_{p_0}$}}			
		\put(26,30){{\scriptsize $L(H(p_0))$}}				
		\put(24,4){{\scriptsize $L(0)$}}
		\put(76,10){{\footnotesize $\s$}}			
		\put(60,15){{\footnotesize $\gamma_{h_0}$}}												
	\end{overpic}
	\bigskip
	\caption{The local invariant manifold $W^{+}_{p_0}(H(p_0))$ for a cusp point and its description in the slice $y=-h^2$ of $R_{p_0}$. We denote by $\widetilde{P_h},\ \widetilde{Q_h}$ and  $\widetilde{\gamma_h}$ the elements analogous to $P_h,\ Q_h$ and $\gamma_h$ in Figure \ref{behavior_cusp}, respectively.   }\label{behavior_H}
\end{figure} 	

Notice that, $W_{p_0}^{+}(0)$ and $W_{p_0}^{+}(H(p_0))$ intersect themselves transversally at the curve $\gamma_{h_0}$. Let $W_{p_0}^+=W_{p_0}^{+}(0)\cup W_{p_0}^{+}(H(p_0))$ and consider $S=(W_{p_0}\cup S_{\widetilde{X_{0}}})\cap\{z=0\}$, hence the invariant manifold $W^{-}_{p_0}$ is constructed as in the fold-regular case, but here we take it as the image of the flow of $\widetilde{Y_{0}}$ through $S$, for $-H(p_0)\leq t\leq 0$.

In this case, the local 2-dimensional invariant manifold of $Z_0$ at $p_0$ is given by $W_{p_0}=\Theta^{-1}\left( W_{p_0}^+ \cup W_{p_0}^-\right)$.

In Figure \ref{foliation}, the foliation of $Z_0$ in $R_{p_0}$ is described. For simplicity, we characterize it on each plane $y=k$, where $-l(p_0)\leq k\leq l(p_0)$. Notice that $R_{p_0}$ is partitioned in regions where the behavior is of type either transversal or visible fold-regular or invisible fold-regular, and the formal description of this regions can be found in Section \ref{fold-reg}.

\begin{figure}[H]
	\centering
	\bigskip
	\begin{overpic}[width=12cm]{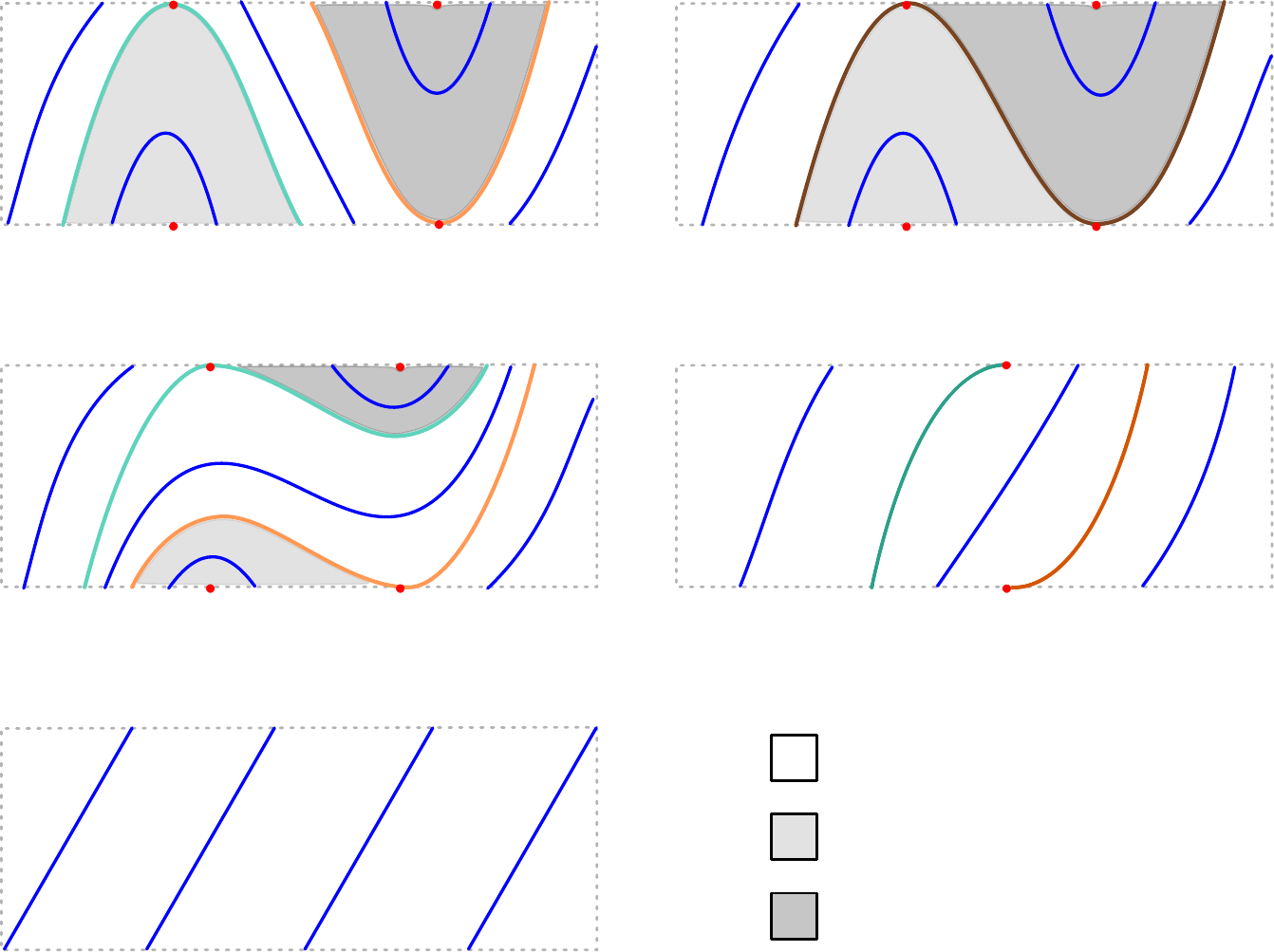}
		
		\put(19,51.5){\footnotesize $y<-h_0^2$}
		\put(5,75.7){\tiny $W_{p_0}^+(H(p_0))$}
		\put(37,55){\tiny $W_{p_0}^+(0)$}			
		\put(75,51.5){\footnotesize $y=-h_0^2$}
		\put(57,55){\tiny $W_{p_0}^+(H(p_0))=W_{p_0}^+(0)$}
		\put(37,55){\tiny $W_{p_0}^+(0)$}		
		\put(17,22){\footnotesize $-h_0^2<y<0$}
		\put(12,47.3){\tiny $W_{p_0}^+(H(p_0))$}
		\put(29,26){\tiny $W_{p_0}^+(0)$}					
		\put(75,22){\footnotesize $y=0$}
		\put(71,47.3){\tiny $W_{p_0}^+(H(p_0))$}
		\put(78,26){\tiny $W_{p_0}^+(0)$}	
		\put(21,-4){\footnotesize $y>0$}
		\put(66,14.5){\footnotesize Regular Sector}	
		\put(66,8.2){\footnotesize Invisible Fold Sector}	
		\put(66,2){\footnotesize Visible Fold Sector}														
	\end{overpic}
	\bigskip
	\caption{Foliation generated by $Z_0$ in the slices $y=k$ ($k$ is a constant) of the neighborhood $R_{p_0}\cap\{z\geq 0\}$.}\label{foliation}
\end{figure}

\subsubsection{Fold-Fold} \label{foldfold}		

If $p_0$ is a fold-fold point of $Z_0$, then we can construct the invariant manifolds of $X_0$ and $Y_0$ for the fold-lines of $X_0$ and $Y_0$, respectively, by following Section \ref{fold-reg} (see Remark \ref{remff}). The resultant manifolds can be seen in Figure \ref{types} for each type of fold-fold singularity.

In addition, if $p_0$ satisfies the conditions of local structural stability at $p_0$, then the results of \cite{GT} allows us to consider that, there exists a homeomorphism $h_{p_0}: V_{p_0}\rightarrow V_{p_0}$ which carries orbits of $Z_0$ onto orbits of $Z$.

Also, notice that, all the trajectories outside the local invariant manifolds intersect $\partial V_{p}$ transversally, and if we consider a neighborhood $V$ of $\s$ in $M$ sufficiently small, an orbit contained in $V$ can intersect $\s$ more than one time only inside neighborhoods $V_{p}$ of elliptic fold-fold points. 

Therefore, there exist only local first return maps in $V$, and since $h_{p_0}$ is a local equivalence between $Z_{0}$ and $Z$ at $p_0$, we can extend it into a semi-local equivalence between $Z_0$ and $Z$ at $\s$. Hence, local first returns are not an obstruction to have semi-local structural stability at $\s$. 

\subsection{Existence of the Invariant Manifold of a Tangency Set}
Now, we must show that the local invariant manifolds of elementary tangential singularities give rise to global invariant manifolds of $S_{Z_0}$ defined in a neighborhood $\Lambda$ of the whole $\s$ in $M$. The process mainly consists on the use of the compactness of $\s$ to concatenate the local manifolds in a smooth way.    

\begin{rem}
	The term global invariant manifold is used to emphasize that it is defined in a neighborhood of the whole $\s$. In fact, it is a collection of local invariant manifolds.
\end{rem}

Let $N_p$ be the normal vector of $\s$ at $p$ pointing toward $\s$. Consider the following \textbf{$\lambda$-lamination of $\s$}:
\begin{equation}
\s_{\lambda}=\{p+\lambda N_p;\ p\in\s\},
\end{equation}
where $\lambda\in\R$.

Let $p\in\s$, since $Z_0\in\Xi_0$, $p$ is either a regular-regular or a fold-regular or a cusp-regular or a fold-fold point of $Z_0$. Hence, let $V_p$ be a compact neighborhood of $p$ in $M$ such that:
\begin{itemize}
	\item[(i)]	If $p$ is regular-regular, then each $q\in\s\cap V_{p}$ is a regular-regular point of $Z_0$ and $X_0,Y_0$ are transverse to $\partial V_p$;
	\item[(ii)] If $p$ is an elementary tangential singularity, consider the neighborhood $V_{p}$ given in Section \ref{local}. 
\end{itemize}

From compactness of $\s$, we can find a finite subcoverture $V=V_{p_1}\cup\cdots V_{p_n}$ of $\s$. Therefore, there exists $\lambda^{*}>0$ such that $\Lambda=\cup_{\lambda\in[-\lambda^*,\lambda^*]} \s_{\lambda}$ is contained in $V$.

Notice that, for each $p\in S_{Z_0}$, the laminations $\s_{\pm\lambda^*}\cap V_{p}$ correspond to planes $z=\pm k$ in the neighborhood $R_{p}$, for some $k>0$. For simplicity, we assume $H(p)=k$. 

Since $\Lambda$ is constructed by laminations of $\s$ in the direction of the normal vectors of $\s$, it follows that the same tangency set $S_{Z_0}$ persists on $\partial \Lambda$.

\begin{figure}[H]
	\centering
	\bigskip
	\begin{overpic}[width=4.5cm]{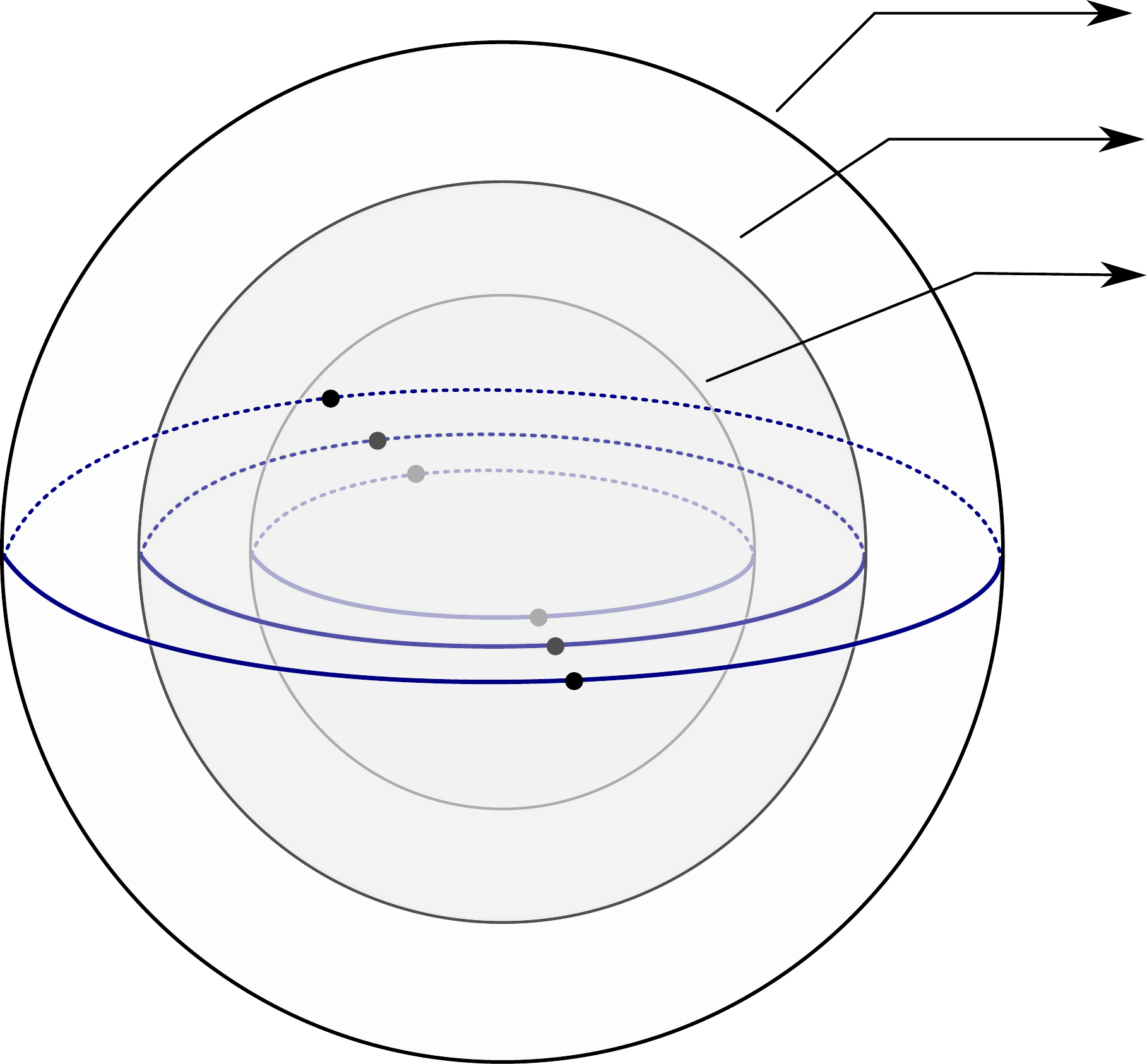}
		%					\begin{overpic}[grid,tics=5,width=11cm]{Figures/discregions.pdf}									
		\put(101,67){\footnotesize $\s_{-\lambda^*}$}
		\put(101,78){\footnotesize $\s$}	
		\put(101,90){\footnotesize $\s_{\lambda^*}$}				
	\end{overpic}
	\bigskip
	\caption{ Example of neighborhood $\Lambda$, where $S_{Z_0}=\{(x,y,0);\ x^2+y^2=1\}$. The distinguished points represent cusp points of $X_0$.   }	
\end{figure}  

Recall that the local invariant manifolds of an elementary tangential singularity $p_0$ depends only on the tangency set of $\widetilde{Z_0}$ with $z=0$, $z= H(p)$ and $z=-H(p)$. Therefore, it depends intrinsically on the tangency set of $Z_0$ with $\s$,and $\s_{\pm \lambda^*}$. 

It is sufficient to prove that all the local invariant manifolds characterized in Section \ref{local} extend themselves to global invariant manifolds of $S_{Z_0}$.

In order to clarify these ideas, we explain how the local invariant manifolds originates a global invariant manifold when $S_{Z_0}=S_{X_0}$, and $S_{X_0}$ is a connected set composed by fold-regular point and two cusp-regular points.

Let $p,q$ be the cusp-regular points of $Z_0$, therefore $S_{X_0}$ is composed by two arcs $A_1$ and $A_2$ with extrema  $p,q$, such that the fold-regular points of $A_1\setminus\{p,q\}$ (resp. $A_2\setminus\{p,q\}$) are visible (resp. invisible). 

At each point $p\in A_{1}$, consider the neighborhoods $V_p$ found in Section \ref{local}. From compactness of $A_1$, a finite number of them can cover $A_{1}$, say it $V_{1}$, $\cdots$, $V_{n}$, and by connectedness, they intersect each other at least in one point.

\begin{figure}[H]\label{globalvisfig}
	\centering
	\bigskip
	\begin{overpic}[width=8cm]{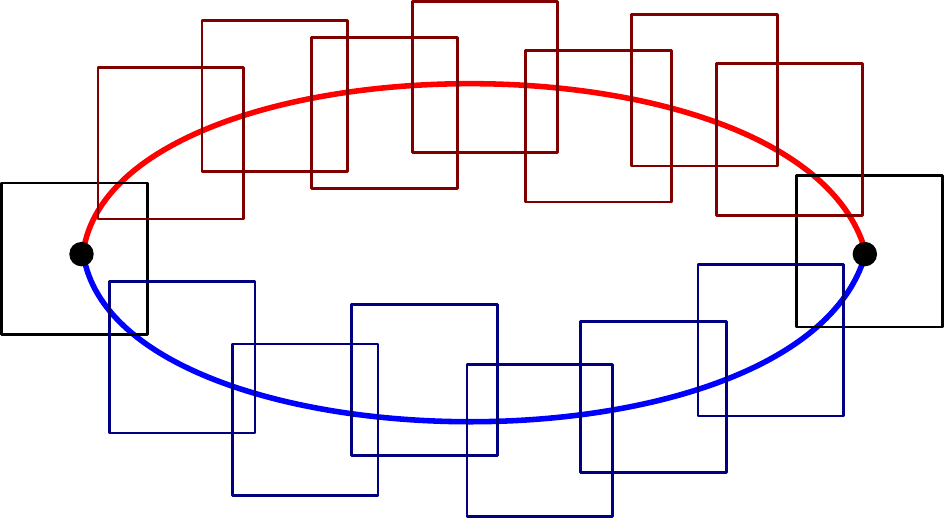}
		\put(5,36.5){\footnotesize $V_1$}
		\put(95,37.5){\footnotesize $V_n$}	
		\put(38.5,41){\footnotesize $A_1$}
		\put(43,12){\footnotesize $A_2$}	
		\put(4.5,26.5){\footnotesize $p$}
		\put(94,26.5){\footnotesize $q$}							
	\end{overpic}
	\bigskip
	\caption{Arcs $A_1$ and $A_2$ of $S_{Z_0}$.}
\end{figure} 

Now, in each $V_i$, we have that the local manifold is given by the image of  $\varphi_{X_0}(t; p)$, with $T_-^i(p)\leq t\leq T_+^i(p) $, where $T_-^i(p)<0<T_+^i(p)$ and $p\in A_1\cap V_i$. Let $q\in A_1\cap \textrm{int}( V_i\cap V_j)$, and restrict the values of $t$ to the interval with extrema $T_{+}(q)=\min\{ T_{+}^{i,j}(q) \}$ and $T_{-}(q)=\min\{ T_{-}^{i,j}(q) \}$. It is enough to reduce the heights of the neighborhoods $R_i$ to concatenate the local manifolds. Repeating this process, we extend the manifolds to the arc $A_1$ obtaining $W_1^+$.

Notice that, in the neighborhoods $V_1$ and $V_n$, we have cusp-regular points, therefore, the invariant manifold in $V_{2}\cup\cdots\cup V_{n-1}$ concatenates with the local invariant manifolds of the cusp-regular points having visible fold-regular points.

The construction of the global manifold of the arc $A_2$ is done in analogous way, but notice that, in this case, the concatenation has to be done in the visible fold-regular points at the lamination, and we obtain the global invariant manifold $W_2^+$.

\begin{figure}[H] 
	\centering
	\bigskip
	\begin{overpic}[width=12cm]{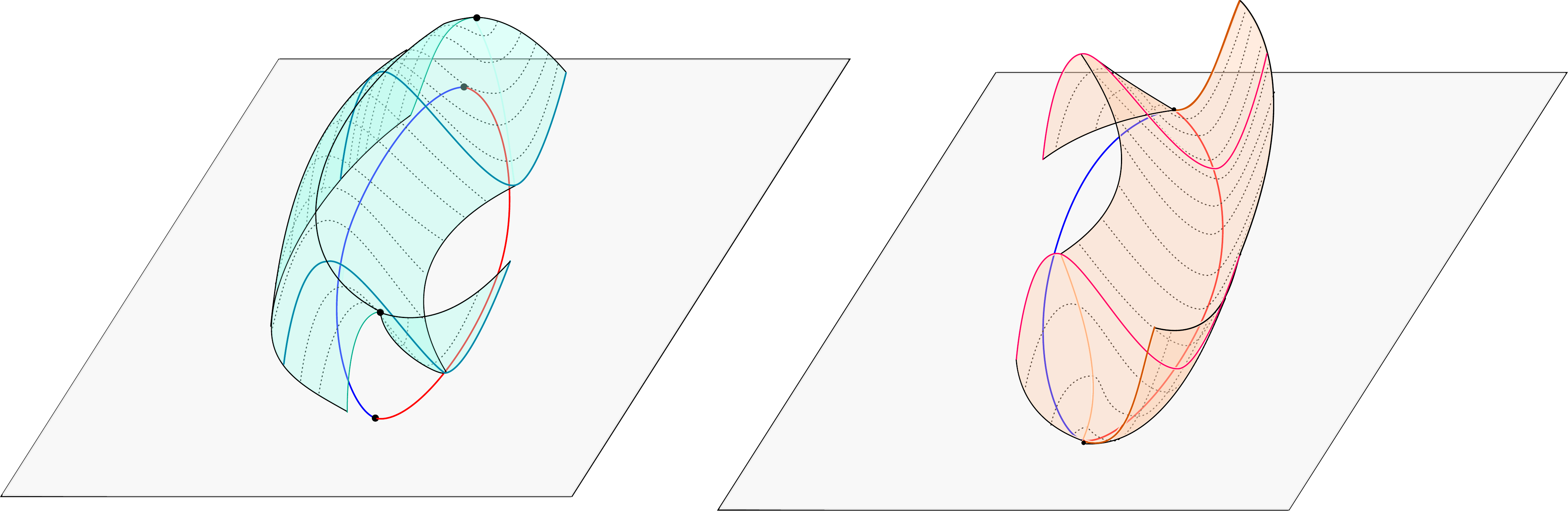}
		\put(18,-3){\footnotesize (a)}
		\put(70,-3){\footnotesize (b)}		
		\put(6,15){\footnotesize $\s$}	
		\put(94,15){\footnotesize $\s$}									
	\end{overpic}
	\bigskip
	\caption{Global manifolds $W_{2}^{+}$ (a) and $W_{1}^{+}$ (b).  }
\end{figure} 	

\subsection{Construction of the Homeomorphism}

Finally, we construct the semi-local equivalence between $Z_0$ and $Z$ at $U_0$. Firstly, define $h:U_0\rightarrow U$ by using Theorem A.

Consider the neighborhood $\Lambda$ of the previous section. From Remark \ref{remsl}, if $p_0\in\partial U_0$ is a fold-fold singularity, then we can extend $h$ into a neighborhood $W_{p_0}=V_{p_0}\cap\Lambda$, i.e. $h:W_{p_0}\rightarrow W_{p_0}$ through the remarks on Section \ref{foldfold}. 

Since $h$ carries tangential singularities of $Z_0$ onto tangential singularities of $Z$ of the same type, then $h$ carries $\partial U_0$ onto $\partial U$. Therefore,  we can use the flows of $Z_0$ and $Z$ to carry the global invariant manifolds of $U_0$ onto the global invariant manifold of $Z$. 

Recall that, outside the global manifolds, the flows of $Z_0$ and $Z$ are transversal to $\partial\Lambda$. Consider any extension of $h$ into a small compact neighborhood $W$ of $U_0 \cup U$ in $\s$.  

Let $V=\{p+\lambda N_p;\ p\in W,\ \lambda\in[-\lambda^*,\lambda^*]\}$, therefore, we can easily extend $h$ into $V$ through the flow of $Z_0$ and $Z$. In fact, the behavior of both piecewise-smooth vector fields are trivial outside global manifolds, and we can use the local foliations characterized in Section \ref{local} and transversality arguments to do this extension ( see \cite{F,GT,ST,T1} for more details).

Now, it follows from construction that $h$ carries orbits of $Z_0$ onto orbits of $Z$. Hence $Z_0$ is semi-local equivalent to $Z$ at $U_0$.

\subsubsection{Conclusion of the Proof}
We have shown that $\s_0\subset \Or_{\s}$. From Local Theory, it is easy to see that, if $Z_0\notin\s_0$ then $Z_0$ is not semi-local structurally stable at $\s$. Therefore, we have proven item $(i)$.	Items $(ii)$ and $(iii)$ of Theorem B follows directly from Corollary E of \cite{GT}.

\section{Closing Remarks and Further Directions}

In this paper, we have found necessary conditions for the structural stability in $\Or$. First of all, remark that all results stated in Section $5$ hold for vector fields having a compact oriented switching manifold $\s$ (without the simply connectedness assumption). In fact, all proofs can be done in the same way as the general case. For simplicity, we have considered $\s$ diffeomorphic to $\mathbb{S}^{2}$ just for technical reasons.

We highlight that the problem remains open for non-orientable switching manifolds. In this case, even the definition of piecewise-smooth vector fields is still not established. It certainly presents lots of mathematical challenges.

The behavior of continuous piecewise smooth vector fields is trivial around the switching manifold, nevertheless they may present a completely non-trivial dynamics from the global point of view. In light of this, the characterization of structural stability is a rather challenging problem.

Finally, the most natural extension of this work is to study what globally happens with the invariant manifolds defined in Section 8.2 in structurally stable systems. It originates applications in generic bifurcation theory. 

\section{Acknowledgments}
O. M. L. Gomide is supported by the FAPESP grants 2015/22762-5 and 2016/23716-0. M. A. Teixeira is supported by the FAPESP grant 2012/18780-0 and by the CNPq grant 300596/2009-0.

\bibliographystyle{amsplain}

%\bibliography{mybibfile.bib}
%%
%%
%\bibliographystyle{amsplain}

\end{document}